\newenvironment{proof}{{\bf Proof}:\ }%
   {~\ \hfill $\Box$\vspace{0,5cm}}
\newenvironment{ack}{\vskip5mm{\bf Acknowledgements:}}%
\newtheorem{theorem}{Theorem}[section]
\newtheorem{prop}[theorem]{Property}
\newtheorem{rmk}[theorem]{Remark}
\newtheorem{lemma}[theorem]{Lemma}
\newtheorem{proposition}[theorem]{Proposition}
\newtheorem{coro}[theorem]{Corollary}
\newtheorem{claim}{Claim}
\newcounter{claimish}[theorem]
\numberwithin{equation}{section}
\begin{document}
\title{The complexity of the bondage problem \\ in planar graphs}

\author{V. Bouquet\footnotemark[1]}
\date{\today}

\def\thefootnote{\fnsymbol{footnote}}

\footnotetext[1]{ \noindent
Conservatoire National des Arts et M\'etiers, CEDRIC laboratory, Paris (France). Email: {\tt
valentin.bouquet@cnam.fr}}

\maketitle

\begin{abstract}
   A set $S\subseteq V(G)$ of a graph $G$ is a dominating set if each vertex has a neighbor in $S$ or belongs to $S$. Let $\gamma(G)$ be the cardinality of a minimum dominating set in $G$. The bondage number $b(G)$ of a graph $G$ is the smallest cardinality of a set of edges $A\subseteq E(G)$, such that $\gamma(G-A)=\gamma(G)+1$. The $d$-\textsc{Bondage} is the problem of deciding, given a graph $G$ and an integer $d\geq 1$, if $b(G)\leq d$. This problem is known to be $\mathsf{NP}$-hard even for bipartite graphs and $d=1$.

   In this paper, we show that $1$-\textsc{Bondage} is $\mathsf{NP}$-hard, even for the class of $3$-regular planar graphs, the class of subcubic claw-free graphs, and the class of bipartite planar graphs of maximum degree $3$, with girth $k$, for any fixed $k\geq 3$. On the positive side, for any planar graph $G$ of girth at least $8$, we show that we can find, in polynomial time, a set of three edges $A$ such that $\gamma(G-A)>\gamma(G)$.
   Last, we exposed some classes of graphs for which \textsc{Dominating Set} can be solved in polynomial time, and where $d$-\textsc{Bondage} can also be solved in polynomial time, for any fixed $d\geq 1$.

\vspace{0.2cm}
\noindent{\textbf{Keywords}\/}: Bondage number, domination, planar graphs, cubic graphs, claw-free graphs, girth.
\end{abstract}

\section{Introduction}\label{intro}

Given a graph $G=(V,E)$, a set $S\subseteq V$ is called a dominating set if every vertex is an element of $S$ or is adjacent to an element of $S$. The minimum cardinality of a dominating set in $G$ is called the domination number and is denoted by $\gamma(G)$. A dominating set $S\subseteq V$ with $\vert S\vert=\gamma(G)$ is called a minimum dominating set, for short a $\gamma$-set. \textsc{Dominating Set} is the problem of deciding, given a graph $G$ and an integer $k\geq 1$, if $\gamma(G)\leq k$. It is a well-known $\mathsf{NP}$-complete problem. For an overview of the topics in graph domination, we refer to the book of Haynes et al.\ \cite{Haynes}. The bondage number has been introduced by Fink et al.\ \cite{Fink} has a parameter to measure the criticality of a graph, in respect to the domination number. The bondage number $b(G)$ of a graph $G$ is the minimum number of edges whose removal from $G$ increases the domination number, that is, with edges $E'\subseteq E$ such that $\gamma(G-E') = \gamma(G)+1$. We say that an edge $e$ is $\gamma$-critical if its removal increase the dominating number, that is, $\gamma(G-e)=\gamma(G)+1$. Therefore a graph $G$ has a $\gamma$-critical edge if and only if $b(G)=1$. The \textsc{Bondage} problem is defined as follows:

\begin{center}
   \begin{boxedminipage}{.99\textwidth}
   $d$-\textsc{Bondage} \\[2pt]
   \begin{tabular}{ r p{0.8\textwidth}}
   \textit{~~~~Instance:} &a graph $G=(V,E)$.\\
   \textit{Question:} &is $b(G)\leq d$ ?
   \end{tabular}
   \end{boxedminipage}
\end{center}

The $1$-\textsc{Bondage} problem has been shown to be $\mathsf{NP}$-hard in \cite{HuXu}, and it has been shown in \cite{HuSohn} that it remains $\mathsf{NP}$-hard when restricted to bipartite graphs. In this paper, we strengthen this result as follows:

\begin{theorem}
   For any fixed $k\geq 3$, $1$-\textsc{Bondage} is $\mathsf{NP}$-hard for bipartite planar graphs with maximum degree $3$ and girth at least $k$.
\end{theorem}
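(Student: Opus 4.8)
The plan is to reduce from a known hard variant of $1$-\textsc{Bondage} and build a gadget-based transformation that simultaneously enforces planarity, bipartiteness, maximum degree $3$, and arbitrarily large girth. The cleanest starting point is to reduce from an $\mathsf{NP}$-hard domination-type problem on planar graphs of bounded degree; given the known hardness of $1$-\textsc{Bondage} on bipartite graphs from \cite{HuSohn}, one natural route is to reduce directly from $1$-\textsc{Bondage} (or from the underlying \textsc{Dominating Set} instance used in that proof) while controlling structure. The key conceptual point is that $b(G)=1$ is equivalent to the existence of a $\gamma$-critical edge, so the reduction must preserve, in a controlled way, which edges are $\gamma$-critical.

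The first step is to design a vertex gadget and an edge gadget that replace each vertex and each edge of the source instance so that (i) degrees are capped at $3$, (ii) the resulting graph is planar when the source is drawn planarly (or when we first reduce to a planar source), and (iii) the gadgets are bipartite. The second step — and this is where girth is handled — is to subdivide edges and pad the gadgets with long induced paths or cycles so that every short cycle is destroyed, raising the girth past the target $k$. The technical care here is that subdivision and padding both change $\gamma$, so I must track exactly how much the domination number increases under each local replacement, and argue a clean additive formula $\gamma(G') = \gamma(G) + c$ for a constant $c$ determined by the number of gadgets. Crucially, the padding attachments should be \emph{forced}: each added path should admit essentially one optimal way to be dominated, so that an optimal dominating set of $G'$ restricts to an optimal dominating set of $G$, and conversely.

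The heart of the argument is the correspondence between $\gamma$-critical edges of $G$ and those of $G'$. I would prove two directions: if $G$ has a $\gamma$-critical edge $e$, then its image in $G'$ (the corresponding edge of the edge gadget) is $\gamma$-critical; and if $G'$ has \emph{any} $\gamma$-critical edge, then it must lie in an edge gadget and project back to a $\gamma$-critical edge of $G$. The second direction is the delicate one: I must show the padding/subdivision edges can never be $\gamma$-critical, which follows if the forced structure of optimal dominating sets leaves those edges ``redundant'' — removing such an edge admits a local repair that re-dominates the affected gadget without spending an extra vertex. This local-repair lemma is the main obstacle, since it must hold uniformly across all gadgets and for arbitrarily long padding.

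To make the girth claim uniform in $k$, I would parametrize the padding length by $k$: each gadget carries paths/cycles of length $\Theta(k)$, so the construction size stays polynomial in $|G|$ for each fixed $k$, and the reduction remains polynomial-time. I expect the hardest part to be the simultaneous satisfaction of all four constraints by a single gadget: bipartiteness forbids odd cycles inside gadgets, the degree-$3$ cap severely limits how vertices can be wired, and large girth forbids any short cycle, so the gadgets cannot rely on small rigid cycles to force domination behavior. Reconciling these — designing tree-like or long-cycle gadgets whose optimal domination is nonetheless rigid enough to transmit $\gamma$-criticality — is the crux, and I would verify it by an exhaustive but routine case analysis of how an optimal dominating set can interact with a single gadget and its attachment points.
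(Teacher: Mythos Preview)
Your high-level plan matches the paper's: start from a hard instance of $1$-\textsc{Bondage} on planar subcubic graphs, replace edges by gadgets, and use repeated subdivisions to push the girth past $k$ while making the result bipartite. But the proposal has a genuine gap at exactly the point you flag as ``the main obstacle.''

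You assert that subdivision/padding edges ``can never be $\gamma$-critical'' because a local repair is available, and that this will follow from a ``routine case analysis.'' This is false without an additional hypothesis: the paper itself observes that the $3$-subdivision ``does not always maintain the bondage number.'' The precise condition under which $3$-subdividing an edge $uv$ preserves whether $b=1$ is that neither $u$ nor $v$ lies in the $\gamma$-anticore (i.e.\ each endpoint belongs to at least one minimum dominating set). If one endpoint is a $\gamma$-anticore vertex, the new path edges can become $\gamma$-critical even when $b(G)\ge 2$, and the back direction of your reduction (``if $G'$ has a $\gamma$-critical edge then so does $G$'') collapses. Your proposal never isolates this condition, so the local-repair claim is unsupported.

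The paper resolves this with a two-stage construction you do not anticipate. It first replaces each edge of the source graph by a specific fixed gadget $H_e$ engineered so that $\gamma(H_e)=8$, $b(H_e)\ge 2$, and $\gamma$-anticore$(H_e)=\{x,y\}$, the two attachment vertices; six explicit minimum dominating sets are exhibited to certify these facts. Only after inserting $H_e$ are $3$-subdivisions applied, and only to edges internal to $H_e\setminus\{x,y\}$, where the anticore condition is now guaranteed. A further $2$-colouring argument on $H_e$ shows which edges to subdivide an even versus odd number of times so that the result is bipartite with all $x$--$y$ paths of the correct parity. The existence of a gadget with this exact combination of properties is the substantive content of the proof; it is not something one can defer to an unspecified case analysis on unspecified gadgets.
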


We think that this result is of interest because of the following upper bound proved by Fischermann et al.\ in \cite{FischRau}.

\begin{theorem}{\cite{FischRau}}
   Let $G$ be a planar graph of girth at least $8$. Then $b(G)\leq 3$.
\end{theorem}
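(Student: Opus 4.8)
The plan is to argue by contradiction from a minimal counterexample, combining a few elementary bondage lemmas with a discharging argument driven by the sparsity that girth $8$ forces. Throughout I would use two facts that are immediate from the definitions: $\gamma$ never decreases under edge deletion, and deleting all $d(v)$ edges incident with a vertex $v$ isolates $v$, so that $\gamma$ of the resulting graph equals $1+\gamma(G-v)$. In particular, whenever a vertex $v$ with $d(v)\le 3$ satisfies $\gamma(G-v)=\gamma(G)$ one already gets $b(G)\le 3$. I would also use the classical bound $b(G)\le d(u)+d(v)-1-|N(u)\cap N(v)|$ for an edge $uv$; since girth $\ge 8$ forbids triangles, adjacent vertices have no common neighbour, so this simply reads $b(G)\le d(u)+d(v)-1$.

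First I would assemble the \emph{reducible configurations}, i.e.\ local patterns forcing $b(G)\le 3$. (R0) If a vertex $u$ carries two leaves $v_1,v_2$, then deleting $uv_1$ isolates $v_1$ and $\gamma(G-uv_1)=1+\gamma(G-v_1)=1+\gamma(G)$, since the remaining leaf $v_2$ keeps $u$ in some $\gamma$-set of $G-v_1$; hence $b(G)=1$. (R1) Two adjacent vertices of degree $2$ give $b(G)\le 2+2-1=3$ by the classical bound. (R2) The key one: if a vertex $w$ of degree at most $3$ has two neighbours $u_1,u_2$ of degree $2$, whose other neighbours $a_1\ne a_2$ are distinct and different from $w$ (guaranteed by girth), then deleting $u_1a_1$ and $u_2a_2$ turns $u_1,u_2$ into two leaves at $w$; applying (R0) to this modified graph by further deleting $wu_1$ raises $\gamma$ by one. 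Thus three deletions suffice and $b(G)\le 3$.

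Next I would run the discharging. Suppose $G$ is a counterexample with $b(G)\ge 4$; then $G$ contains none of (R0)--(R2), and (provisionally) $\delta(G)\ge 2$. Assign to each vertex the charge $\mu(v)=d(v)-\tfrac{8}{3}$. Euler's formula with girth $\ge 8$ gives $2|E|<\tfrac{8}{3}|V|$, so the total charge $\sum_v\mu(v)=2|E|-\tfrac{8}{3}|V|$ is strictly negative. Apply the single rule: every vertex of degree $\ge 3$ sends $\tfrac13$ to each neighbour of degree $2$. As (R1) is excluded, a degree-$2$ vertex has both neighbours of degree $\ge 3$ and ends with $-\tfrac23+2\cdot\tfrac13=0$; as (R2) is excluded, a degree-$3$ vertex sends at most $\tfrac13$ and ends $\ge 0$; a vertex of degree $d\ge 4$ ends with $\ge d-\tfrac{8}{3}-\tfrac{d}{3}=\tfrac{2d-8}{3}\ge 0$. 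Every final charge is non-negative, contradicting the negative total, so no counterexample with $\delta\ge 2$ exists.

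The main obstacle is everything buried in the two qualifications above, and this is exactly where girth $8$ is used. The first difficulty is $\gamma$-critical vertices: when every low-degree $v$ satisfies $\gamma(G-v)=\gamma(G)-1$ (as already happens for the cycles $C_n$ with $n\equiv 1\pmod 3$, which have $b=3$), the isolation trick yields nothing and one must verify the reducible lemmas — above all (R2) — through the more careful ``trim to two leaves'' deletions, whose correctness relies on girth $\ge 8$ to prevent domination from being rerouted around a short cycle. The second, genuinely delicate, point is degree-$1$ vertices: a solitary leaf whose neighbour has degree $\ge 4$ is not covered by (R0)--(R2) and carries too little charge, so the bare discharging fails for it. Handling it requires either eliminating leaves by a separate reduction (one shows $b(G)\le 1+b(G-v)$ when the leaf is critical, together with a compensating configuration forced elsewhere) or a refined, face-aware discharging. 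Making this last case rigorous while keeping the edge budget at $3$ is the crux of the proof.
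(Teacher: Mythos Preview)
This theorem is quoted from Fischermann--Rautenbach--Volkmann and is not re-proved in the paper; however, in the proof of the subsequent Proposition the paper summarises their argument. The structural lemma used there is: every planar graph of girth at least $8$ contains vertices $u,v$ with either (i) $uv\in E$ and $d(u)+d(v)\le 4$, or (ii) $d(u,v)\le 2$ and $d(u)+d(v)\le 3$. In case (i) one removes all edges incident with $u$ or $v$ (at most $d(u)+d(v)-1\le 3$ of them, since $uv$ is shared); in case (ii) with $d(u,v)=2$, necessarily $d(v)=1$ and $d(u)\le 2$, and one removes the edges at $u$ together with the edge $vw$ joining $v$ to the common neighbour $w$. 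In either case $u$ and $v$ are isolated in $G-E'$, and a single neighbour ($v$, respectively $w$) re-dominates them in $G$, so $\gamma(G-E')>\gamma(G)$.

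Your route is genuinely different: reducible configurations (R0)--(R2) together with a purely vertex-based discharging with initial charge $d(v)-\tfrac{8}{3}$. For $\delta(G)\ge 2$ the argument is correct and pleasantly short; your (R1) is the special case $d(u)=d(v)=2$ of (i), while (R2) is a configuration that does not appear in the original scheme. The gap is exactly the one you flag yourself: a leaf $v$ whose neighbour has degree $\ge 4$ is caught by none of (R0)--(R2), its charge $1-\tfrac{8}{3}=-\tfrac{5}{3}$ cannot be absorbed by your single rule, and neither of the two patches you sketch is carried out. The inequality $b(G)\le 1+b(G-v)$ for a leaf $v$ does hold (delete $vw$ together with a bondage set of $G-v$ and use $\gamma(G-v)\ge\gamma(G)-1$), but combined with induction it only yields $b(G)\le 4$; the ``compensating configuration forced elsewhere'' would then have to do all the work and is left unspecified. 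By contrast, the original structural lemma handles leaves directly through alternative (ii), so no separate treatment is needed. As written your proposal is therefore incomplete, and closing it would amount to proving a structural fact of the same flavour as (i)/(ii).
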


Fortunately we were able to extend this result to prove the following:

\begin{proposition}
   Let $G=(V,E)$ be a planar graph of girth at least $8$. Then we can find a set $E'\subseteq E$, where $\vert E'\vert = 3$, such that $\gamma(G-E')>\gamma(G)$ in polynomial time.
\end{proposition}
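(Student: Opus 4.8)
The plan is to promote the existence statement $b(G)\le 3$ of \cite{FischRau} into an efficient search, and the first observation is that the existence of a suitable $E'$ is essentially free. The bound $b(G)\le 3$ provides a set $E_0\subseteq E$ with $|E_0|\le 3$ and $\gamma(G-E_0)=\gamma(G)+1$. Moreover $\gamma$ is monotone non-decreasing under edge deletion: every dominating set of $G-F$ is still a dominating set of $G$, since deletions only destroy domination relations, so $\gamma(G)\le\gamma(G-F)$ for all $F$, and $\gamma(G-F)\le\gamma(G-F')$ whenever $F\subseteq F'$. Consequently, padding $E_0$ by any $3-|E_0|$ further edges to a set $E'$ with $|E'|=3$ still gives $\gamma(G-E')\ge\gamma(G-E_0)>\gamma(G)$. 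The entire content of the proposition is therefore to \emph{locate} such a set in polynomial time, i.e.\ without ever computing $\gamma$, which is \textsf{NP}-hard.

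First I would revisit the proof of the bound in \cite{FischRau}. Like most bondage bounds for sparse planar graphs, it rests on two ingredients. The first is a discharging argument based on Euler's formula: girth at least $8$ forces $|E|\le\tfrac{4}{3}(|V|-2)$, hence average degree below $8/3$, and this guarantees that $G$ contains at least one configuration from a fixed finite list $\mathcal C$ of \emph{local} configurations (each of bounded radius, built around vertices of small degree and short paths of degree-$2$ vertices). The second is, for each $C\in\mathcal C$, a local argument exhibiting a set of at most three edges incident to $C$ whose deletion strictly increases $\gamma$. The feature I would want to extract is that in each case the increase is certified by purely local domination data --- forced members of every $\gamma$-set, private neighbours, the impossibility of dominating $C$ with no extra vertex after deletion --- rather than by any global optimisation.

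Granting this, the algorithm is immediate. Because each configuration in $\mathcal C$ has bounded size, there are only $O(|V|)$ candidate locations, and testing whether a prescribed bounded-radius configuration occurs at a given vertex takes constant time; scanning all vertices finds an occurrence of some $C\in\mathcal C$ in polynomial time. I would then read off the associated at-most-three edges $E_0$ directly from $C$, pad to $|E'|=3$ as above, and output $E'$. Correctness requires no evaluation of $\gamma$: the local certificate attached to $C$ already guarantees $\gamma(G-E_0)>\gamma(G)$, and monotonicity supplies the rest.

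The hard part will be the bookkeeping of the case analysis rather than any single conceptual step. One must re-examine every case of the discharging argument and confirm that the three edges it prescribes come with a genuinely local certificate of the $\gamma$-increase. This is delicate precisely because in the girth-$8$ regime no single low-degree edge need suffice --- for instance a once-subdivided cubic planar graph can have girth $8$, yet every edge there joins a degree-$3$ vertex to a degree-$2$ vertex, so the naive estimate $d(u)+d(v)-1$ never drops to $3$, and the deletions must instead be organised around the degree-$2$ (subdivision) vertices and their private-neighbour structure. Verifying that each configuration of $\mathcal C$ admits such a certifiable three-edge deletion, and that these configurations are indeed unavoidable under girth at least $8$, is where the effort concentrates; once that is in place, the polynomial-time local search together with the monotonicity padding assembles into the claimed algorithm.
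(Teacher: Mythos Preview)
Your approach is essentially the paper's: revisit the existence proof in \cite{FischRau}, observe that it proceeds via unavoidable local configurations, and turn that into a polynomial-time search that outputs the associated edges together with a local certificate for the $\gamma$-increase. The paper simply carries this out concretely rather than in the abstract: it quotes from \cite{FischRau} that every planar graph of girth at least $8$ contains vertices $u,v$ with either (1) $uv\in E$ and $d(u)+d(v)\le 4$, or (2) $d(u,v)\le 2$ and $d(u)+d(v)\le 3$; it then writes down, for each of these two cases, the explicit set $E'$ of at most three incident edges and the two-line domination argument showing $\gamma(G-E')>\gamma(G)$. So where you say ``the hard part will be the bookkeeping of the case analysis'', the paper's point is that the case analysis is already in \cite{FischRau} and consists of just these two cases, each with an immediate certificate; there is no further discharging to redo.

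One thing you handle that the paper glosses over: the statement asks for $|E'|=3$, while the argument naturally produces $|E'|\le 3$. Your monotonicity-and-padding observation (that $\gamma$ is non-decreasing under edge deletion, so one may add arbitrary extra edges to $E'$) cleanly closes that gap.
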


For an extending overview of the bondage number and its related properties, we refer to the survey of Xu \cite{Xu}.

\section{Notations and preliminaries}

The graphs considered in this paper are finite and simple, that is, without directed edges or loops or parallel edges. The reader is referred to \cite{Bondy} for definitions and notations in graph theory, and to \cite{GareyJohnson} for definitions and terminology concerning complexity theory. \smallskip

Let $G=(V,E)$ be a graph with vertex set $V=V(G)$ and edge set $E=E(G)$. Let $v\in V$ and $xy\in E$. We say that $x$ and $y$ are the \textit{endpoints} of the edge. The \textit{degree} of $v$ in $G$ is $d_G(v)$ or simply $d(v)$ when the referred graph is obvious. When $d(v)=0$ we say that $v$ is \textit{isolated}, and when $d(v)=1$ we say that $v$ is a \textit{leaf}. Let $\delta(G)$ and $\Delta(G)$ denote its minimum degree and its maximum degree, respectively. A \textit{$k$-vertex} is a vertex of degree $k$. The graph $G$ is \textit{$k$-regular} whenever it contains only $k$-vertices. We say that a graph is \textit{cubic} if it is $3$-regular, and is \textit{subcubic} if $\Delta(G)\leq 3$. We denote by $N_G(v)$ the \textit{open neighborhood} of a vertex $v$ in $G$, and $N_G[v]=N_G(v)\cup\{v\}$ its \textit{closed neighborhood} in $G$. When it is clear from context, we note $N(v)$ and $N[v]$. For a set $U\subseteq V$, its \textit{open neighborhood} is $N(U)=\{N(u)\setminus U \mid u\in U\}$, and its \textit{closed neighborhood} is $N[U]=\{N[u] \mid u\in U\}$. For a subset $U\subseteq V$, let $G[U]$ denote the subgraph of $G$ induced by $U$, which has vertex set $U$ and edge set $\{uv\in E \mid u,v\in U\}$. We may refer to $U$ as an \textit{induced subgraph} of $G$ when it is clear from the context. If a graph $G$ has no induced subgraph isomorphic to a fixed graph $H$, we say that $G$ is \textit{$H$-free}. Let $g(G)$ denote the \textit{girth} of $G$, that is, the minimum length of an induced cycle. By convention, $g(G)=+\infty$ when $G$ is acyclic. \smallskip

For a vertex $v\in V$, we write $G-v=(V\setminus \{v\}, E\setminus E')$, where $E'=\{uv \mid u\in N(v)\}$, and for a subset $V'\subseteq V$, we write $G-V'=(V\setminus V',E\setminus E')$, where $E'=\{uv \mid v\in V', u\in N(v)\}$. For an edge $e\in E$, we write $G-e=(V,E\setminus \{e\})$, and for a subset $E'\subseteq E$, we write $G-E'=(V, E\setminus E')$. For $n\geq 1$, the graph $P_n=u_1-u_2-\cdots-u_n$ denotes the \textit{cordless path} or \textit{induced path} on $n$ vertices, that is, $V({P_n})=\{u_1,\ldots,u_n\}$, and $E({P_n})=\{u_iu_{i+1}\; |\; 1\leq i\leq n-1\}$. For a path $P_n$ we say that $P_n$ is an \textit{even path} if $n$ is even, else it is an \textit{odd path}. We denote by $d(u,v)$ the \textit{distance} between two vertices, that is, the length of a shortest path between $u$ and $v$. Note that when $uv\in E$, $d(u,v)=1$. For $n\geq 3$, the graph $C_n=u_1-u_2-\cdots-u_n-u_1$ denotes the \textit{cordless cycle} or \textit{induced cycle} on $n$ vertices, that is, $V({C_n})=\{u_1,\ldots,u_n\}$ and $E({C_n})=\{u_iu_{i+1}\; |\; 1\leq i\leq n-1\}\cup \{u_nu_1\}$. For $n\ge 4$, $C_n$ is called a \textit{hole}. The \textit{triangle} is $C_3$. The \textit{claw} $K_{1,3}$ is the 4-vertex \textit{star}, that is, the graph with vertices $u$, $v_1$, $v_2$, $v_3$ and edges $uv_1$, $uv_2$, $uv_3$. We say that $u$ is the center of the claw. The \textit{paw} is the graph with four vertices $u_1,u_2,u_3$, $v_1$ and edges $u_1u_2,u_2u_3,u_1u_3,u_1v_1$. A class of graphs $\mathcal{C}$ is \textit{closed under edge deletions} when for every graph $G\in \mathcal{C}$, and for every $e\in E(G)$, we have $G-e\in \mathcal C$. \smallskip

For two vertex disjoint induced subgraphs $A,B$ of $G$: $A$ is \textit{complete} to $B$ if $ab$ is an edge for any $a\in V(A)$ and $b\in V(B)$, and $A$ is \textit{anti-complete} to $B$ if $ab$ is not an edge for any $a\in V(A)$ and $b\in V(B)$. A set $U\subseteq V$ is called a \textit{clique} if any pairwise distinct vertices $u,v\in U$ are adjacent. The graph $K_n$ is the clique with $n$ vertices. When $G$ is a clique, then $G$ is a \textit{complete graph}. A set $U\subseteq V$ is called a \textit{stable set} or an \textit{independent set} if any pairwise distinct vertices $u,v\in U$ are non adjacent. A set $U\subseteq V$ is called a \textit{vertex cover} if every edge $e$ of $G$ has an endpoint in $U$. We denote by $\tau(G)$ the size of a minimum vertex cover of $G$ and by $\alpha(G)$ the size of a maximum independent set of $G$. A $\tau$-set of $G$ is a minimum vertex cover of $G$ and an $\alpha$-set of $G$ is a maximum independent set of $G$. We denote by \textit{$\alpha$-core$(G)$} and \textit{$\tau$-core$(G)$} the set of vertices that belongs to every $\alpha$-set and every $\tau$-set, respectively. The set of vertices that belongs to no $\alpha$-set and no $\tau$-set is called \textit{$\alpha$-anticore$(G)$}, \textit{$\tau$-anticore$(G)$}, respectively. We say that a graph $G$ has \textit{$X$-cores} or \textit{$X$-anticores} when $X$-core$(G)\neq \emptyset$ or $X$-anticore$(G)\neq \emptyset$, where $X\in \{\alpha,\gamma,\tau\}$. Note that given an $\alpha$-set $S$ of $G$, the set of vertices $V\setminus S$ is a $\tau$-set of $G$, and vice-versa. Hence $\alpha$-core$(G)=\tau$-anticore$(G)$ and $\alpha$-anticore$(G)=\tau$-core$(G)$.

\section{Cores and anticores}

Before giving hardness results for the Bondage problem in planar graphs, we introduce the two following problems:

\begin{center}
   \begin{boxedminipage}{.99\textwidth}
   \textsc{\sc $\alpha$-Core} \\[2pt]
   \begin{tabular}{ r p{0.8\textwidth}}
   \textit{~~~~Instance:} &A graph $G=(V,E)$. \\
   \textit{Question:} & Is there a vertex that belongs to every \textsc{Maximum Independent Set} of $G$, that is, $\vert \alpha$-core$(G)\vert \geq 1$\,?
   \end{tabular}
   \end{boxedminipage}
\end{center}

\begin{center}
   \begin{boxedminipage}{.99\textwidth}
   \textsc{\sc $\tau$-Anticore} \\[2pt]
   \begin{tabular}{ r p{0.8\textwidth}}
   \textit{~~~~Instance:} &A graph $G=(V,E)$. \\
   \textit{Question:} & Is there a vertex that belongs to no \textsc{Minimum Vertex Cover} of $G$\, that is, $\vert \tau$-core$(G)\vert \geq 1$\,?
   \end{tabular}
   \end{boxedminipage}
\end{center}


Boros et al.\ studied $\alpha$-cores in \cite{Boros}. They \textit{announced} that for any fixed integer $k\geq 0$, deciding if a given graph $G$ is such that $\vert \alpha$-core$(G)\vert >k$ is $\mathsf{NP}$-complete. In fact, they only showed that this problem is $\mathsf{NP}$-hard. Note that $\alpha$-\textsc{Core} is the same problem for $k=1$. We remark that if $\alpha$-\textsc{Core} is in NP, then there must exists a certificate, that we can test in polynomial time, with the following characteristics: given a graph $G$ and $C\subseteq V(G)$, is $C\in \alpha$-core$(G)$? Therefore we should be able to test if a given vertex is in $\alpha$-core$(G)$ in polynomial time. Yet, we had the intuition that this would not be possible, unless $\mathsf{P}=\mathsf{NP}$. The following result proved us right.

\begin{theorem}\label{No-Poly-Core-MIS}
   Given a graph $G$ and a vertex $v$, there is no polynomial time algorithm deciding if $v\in \alpha$-core$(G)$, unless $\mathsf{P}=\mathsf{NP}$.
\end{theorem}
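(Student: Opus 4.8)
The plan is to show that a polynomial-time test for $\alpha$-core membership would solve \textsc{Independent Set}, which is $\mathsf{NP}$-complete, thereby forcing $\mathsf{P}=\mathsf{NP}$. First I would record the basic equivalence: a vertex $v$ lies in $\alpha$-core$(G)$ iff every maximum independent set contains $v$, which happens iff deleting $v$ strictly lowers the independence number, i.e.\ $\alpha(G-v)=\alpha(G)-1$; equivalently, $v\notin\alpha$-core$(G)$ iff $\alpha(G-v)=\alpha(G)$. So the decision ``is $v\in\alpha$-core$(G)$?'' asks exactly whether $v$ is needed by \emph{every} optimum, and the idea is to engineer an instance in which this need is governed by a threshold on $\alpha(H)$.

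Given an \textsc{Independent Set} instance $(H,k)$ with $1\le k\le |V(H)|$, I would build $G$ by taking $H$, adding a fresh independent set $K=\{v,u_1,\dots,u_{k-1}\}$ of size $k$ (no edges inside $K$), and making $K$ complete to $V(H)$ (every vertex of $K$ adjacent to every vertex of $H$), while $H$ keeps its own edges. This construction is clearly polynomial. The key structural observation is that, since $K$ is complete to $V(H)$, no independent set of $G$ can use a vertex of $K$ together with a vertex of $H$; hence every independent set of $G$ lies entirely in $K$ or entirely in $V(H)$. Consequently $\alpha(G)=\max(k,\alpha(H))$, the only way to reach size $k$ on the $K$-side is to take all of $K$ (which contains $v$), and any optimum on the $H$-side avoids $v$ altogether.

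From this I would run the case analysis on $\alpha(H)$ versus $k$. If $\alpha(H)<k$, the unique maximum independent set is $K$ itself, so $v$ belongs to every optimum and $v\in\alpha$-core$(G)$. If $\alpha(H)\ge k$, then $\alpha(G)=\alpha(H)$ is attained by a maximum independent set of $H$, which lies in $V(H)$ and hence avoids $v$; thus $v\notin\alpha$-core$(G)$. Therefore $v\in\alpha$-core$(G)$ if and only if $\alpha(H)<k$. A polynomial-time algorithm deciding $v\in\alpha$-core$(G)$ would, through this reduction, decide ``$\alpha(H)<k$'' in polynomial time, hence also decide its complement ``$\alpha(H)\ge k$'', which is precisely \textsc{Independent Set}; this forces $\mathsf{P}=\mathsf{NP}$.

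The construction is short, so I expect the only delicate point to be the boundary case $\alpha(H)=k$: there the $K$-side and the $H$-side both give optima of size $k$, and it is exactly the existence of the $H$-side optimum avoiding $v$ that makes $v$ fall out of the core; this is why the reduction must be stated with the \emph{strict} inequality $\alpha(H)<k$. The other point to verify carefully is the uniqueness claim when $\alpha(H)<k$, namely that no independent set of size $k$ can mix the two sides, but this follows immediately from $K$ being complete to $V(H)$.
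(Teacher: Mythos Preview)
Your proof is correct, but it takes a genuinely different route from the paper's. The paper argues by \emph{self-reducibility}: assuming a polynomial-time oracle for ``$v\in\alpha\text{-core}(G)$?'', it processes the vertices of an \textsc{Mis} instance $(G,k)$ one at a time, observing that $v\in\alpha\text{-core}(G)$ iff $\alpha(G-v)=\alpha(G)-1$, and hence that $(G,k)$ is equivalent to $(G-v,k-1)$ or $(G-v,k)$ according to the oracle's answer; iterating $|V(G)|$ times solves \textsc{Mis}. This is a polynomial-time Turing reduction using many oracle calls but no gadget. Your argument instead gives a single-call (many-one) reduction via the join $G=H+K$ with $K$ an independent set of size $k$: one query about the designated vertex $v\in K$ decides whether $\alpha(H)<k$, and complementation (harmless since $\mathsf{P}$ is closed under complement) yields \textsc{Independent Set}. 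Your construction is slightly heavier but conceptually sharper, yielding a stronger reduction type; the paper's approach avoids any construction and leans only on the immediate identity $\alpha(G-v)\in\{\alpha(G),\alpha(G)-1\}$. Both establish the theorem.
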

\begin{proof}
   We define as \textsc{Core-Mis} the problem of deciding, given a graph $G=(V,E)$ and a vertex $v\in V$, if $v\in \alpha$-core$(G)$. We remark that if $v\in \alpha$-core$(G)$, then $\alpha(G-v)=\alpha(G)-1$, else $\alpha(G)=\alpha(G-v)$. We show that if \textsc{Core-Mis} can be solved in polynomial time, then \textsc{Maximum Independent Set} (\textsc{Mis} for short) can also be solved in polynomial time.

   Let $(G,k)$ be an instance for \textsc{Mis} and let $v$ be a vertex of $G$. We solve \textsc{Core-Mis} with input $(G,v)$ in polynomial time. If $v\in \alpha-$core$(G)$, then $G$ has an independent set of size $k$ if and only if $G-v$ has an independent set of size $k-1$. Hence the instance $(G-v,l-1)$ is equivalent to $G(G,l)$ for \textsc{Mis}. If $v\not\in \alpha$-core$(G)$, then $G$ has an independent set of size $k$ if and only if $G-v$ has an independent set of size $k$. Hence the instance $(G-v,l)$ is equivalent to $(G,l)$ for \textsc{Mis}.

   Therefore, we can use the polynomial time algorithm for \textsc{Core-Mis}, to have an equivalent instance of \textsc{Mis} with one less vertex. Hence by running this algorithm iteratively on the vertices of $G$, we can solve \textsc{Mis} in polynomial time. Yet \textsc{Mis} is known to be $\mathsf{NP}$-complete, see \cite{GareyJohnson}. This completes the proof.
\end{proof}

From Theorem \ref{No-Poly-Core-MIS} it follows that $\alpha$-\textsc{Core} is not in $\mathsf{NP}$. From the result of Boros et al in \cite{Boros}, it follows:

\begin{theorem}{\cite{Boros}}\label{NPH_alpha_core_anti}
   $\alpha$-\textsc{Core} is $\mathsf{NP}$-hard.
\end{theorem}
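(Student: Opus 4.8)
The plan is to give a polynomial many-one reduction from \textsc{Mis} to $\alpha$-\textsc{Core} in which yes-instances map to yes-instances; since \textsc{Mis} is $\mathsf{NP}$-complete, this yields $\mathsf{NP}$-hardness. The engine of the reduction is the following elementary forcing criterion, already implicit in the proof of Theorem~\ref{No-Poly-Core-MIS}: for any vertex $v$ of a graph $G$ one has $\alpha(G)=\max\{\alpha(G-v),\,1+\alpha(G-N[v])\}$, and since $G-N[v]$ is an induced subgraph of $G-v$ we always have $\alpha(G-N[v])\le\alpha(G-v)$. Consequently $v\in\alpha$-core$(G)$, i.e.\ $v$ lies in every maximum independent set, if and only if $\alpha(G-v)=\alpha(G-N[v])$; that is, iff deleting the neighbours of $v$ on top of $v$ itself does not further decrease the independence number. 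I would design $G$ so that this equality is governed precisely by the threshold of the \textsc{Mis} instance.

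Given an instance $(H,k)$ of \textsc{Mis}, I would build $G$ as follows. Take a disjoint copy of $H$; add $k$ independent edges $B=\{b_ib_i'\mid 1\le i\le k\}$, making every vertex of $B$ complete to $V(H)$; finally add one extra vertex $v$ whose neighbourhood is exactly $V(B)$. Then $N[v]=\{v\}\cup V(B)$, so $G-N[v]$ is precisely $H$ and $\alpha(G-N[v])=\alpha(H)$. In $G-v$ the set $B$ is complete to $H$, so an independent set lies entirely in $B$ or entirely in $H$; since $\alpha(B)=k$, this gives $\alpha(G-v)=\max\{k,\alpha(H)\}$. By the forcing criterion, $v\in\alpha$-core$(G)$ iff $\max\{k,\alpha(H)\}=\alpha(H)$, i.e.\ iff $\alpha(H)\ge k$. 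Thus on a yes-instance of \textsc{Mis} the vertex $v$ is forced and $\alpha$-core$(G)\neq\emptyset$.

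The delicate point, and the step I expect to be the real obstacle, is the converse: when $\alpha(H)<k$ I must guarantee that $\alpha$-core$(G)$ is \emph{empty}, not merely that $v$ is unforced, since a single forced vertex anywhere would wrongly certify a yes-instance. Here $\alpha(G)=\max\{k,1+\alpha(H)\}=k$, and every maximum independent set is either a maximum independent set of $B$ (one endpoint per edge) or, when $\alpha(H)=k-1$, a set $\{v\}\cup S$ with $S$ an $\alpha$-set of $H$. The matching structure of $B$ is chosen exactly so that no vertex of $B$ survives in all of its maximum independent sets (each $b_i$ can be swapped for $b_i'$), so no $B$-vertex is forced; and an all-$B$ set and a $\{v\}\cup S$ set are vertex-disjoint, so any common intersection is empty. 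In either case no vertex lies in every maximum independent set, whence $\alpha$-core$(G)=\emptyset$. This establishes $\alpha$-core$(G)\neq\emptyset\iff\alpha(H)\ge k$, and as the construction is clearly polynomial, $\alpha$-\textsc{Core} is $\mathsf{NP}$-hard. The only care needed is in the gadget $B$: replacing it by $k$ isolated vertices would break the reduction, since for $\alpha(H)<k-1$ that set would be the unique maximum independent set and its vertices would form a non-empty core.
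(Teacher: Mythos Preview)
The paper does not actually prove this statement: it simply attributes the result to Boros, Golumbic and Levit~\cite{Boros} and moves on. Your proposal, by contrast, supplies a complete self-contained reduction from \textsc{Mis}, and it is correct. The forcing criterion $v\in\alpha\text{-core}(G)\iff\alpha(G-v)=\alpha(G-N[v])$ is valid, and your gadget is well chosen: since $V(B)$ is complete to $V(H)$ one has $\alpha(G-v)=\max\{k,\alpha(H)\}$ and $\alpha(G-N[v])=\alpha(H)$, so $v$ is forced precisely when $\alpha(H)\ge k$. The delicate direction---showing that $\alpha$-core$(G)=\emptyset$ when $\alpha(H)<k$---is handled correctly in both subcases: when $\alpha(H)<k-1$ the only maximum independent sets are transversals of the matching $B$, and the swap $b_i\leftrightarrow b_i'$ kills any candidate core vertex; when $\alpha(H)=k-1$ the additional maximum sets $\{v\}\cup S$ are disjoint from every $B$-transversal, so the intersection over all maximum independent sets is empty. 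Your closing observation that $k$ isolated vertices in place of a matching would break the argument (they would form a unique maximum independent set when $\alpha(H)<k-1$) shows you have identified exactly where the construction earns its keep.
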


Since $\alpha$-core$(G)=\tau$-anticore$(G)$ for any graph $G$, we obtain the following corollary.

\begin{coro}\label{NPH_tau_core_anti}
   $\tau$-\textsc{Anticore} is $\mathsf{NP}$-hard.
\end{coro}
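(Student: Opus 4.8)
The plan is to observe that $\tau$-\textsc{Anticore} and $\alpha$-\textsc{Core} are, in effect, the very same decision problem, so that the identity map on instances serves as a trivial polynomial-time reduction. First I would recall the set-theoretic identity recorded at the end of Section~2: for every graph $G$, a set $S\subseteq V$ is an $\alpha$-set if and only if its complement $V\setminus S$ is a $\tau$-set. From this complementation bijection between the family of maximum independent sets and the family of minimum vertex covers, it follows that a vertex $v$ lies in every $\alpha$-set precisely when it lies in no $\tau$-set, which is exactly the assertion $\alpha$-core$(G)=\tau$-anticore$(G)$.

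Second, I would translate this equality into a statement about the two decision problems. An instance $G$ is a yes-instance of $\alpha$-\textsc{Core} exactly when $\vert\alpha$-core$(G)\vert\geq 1$, and it is a yes-instance of $\tau$-\textsc{Anticore} exactly when $\vert\tau$-anticore$(G)\vert\geq 1$. Since the two sets coincide on every graph, these two conditions hold or fail simultaneously. Hence the map sending an instance $G$ to itself is a valid reduction: it is computable in constant time and it preserves answers in both directions, so it reduces $\alpha$-\textsc{Core} to $\tau$-\textsc{Anticore}. Invoking Theorem~\ref{NPH_alpha_core_anti}, which states that $\alpha$-\textsc{Core} is $\mathsf{NP}$-hard, we conclude that $\tau$-\textsc{Anticore} is $\mathsf{NP}$-hard as well.

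There is essentially no obstacle in this argument; its entire content is the complementation bijection $S\mapsto V\setminus S$ between $\alpha$-sets and $\tau$-sets, which is already established in the preliminaries. The only point that warrants a moment of care is that this bijection genuinely descends to the level of cores and anticores, that is, that ``$v$ belongs to every $\alpha$-set'' and ``$v$ belongs to no $\tau$-set'' are equivalent for a fixed vertex $v$ — but this is immediate, since $S\mapsto V\setminus S$ maps the collection of all $\alpha$-sets bijectively onto the collection of all $\tau$-sets, so $v$ is in the intersection of the former if and only if $v$ misses the union of the latter.
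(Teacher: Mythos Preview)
Your proof is correct and follows exactly the paper's approach: the paper derives the corollary in one line from the equality $\alpha$-core$(G)=\tau$-anticore$(G)$ established in the preliminaries, and your argument is simply a more detailed unpacking of that same observation.
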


We describe one operation that will be of use to prove that $\tau$-anticore remains $\mathsf{NP}$-hard for planar graphs. Given a graph $G=(V,E)$ and $v\in V(G)$, we define the graph $G_v+u=(V',E')$, where $u\not\in V(G)$ as follows: $V'=V\cup \{u\}$ and $E'=E\cup \{uv\}$.

\begin{prop}\label{tau_anticore}
   Let $v$ be a vertex of a graph $G=(V,E)$, then $v \in \tau$-anticore$(G)$ if and only if $\tau(G_v+u)= \tau(G)+1$.
\end{prop}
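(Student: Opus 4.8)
The plan is to prove the biconditional by analyzing how adding a pendant vertex $u$ attached to $v$ changes the minimum vertex cover. The key observation is that in $G_v+u$, the new edge $uv$ must be covered, so every vertex cover of $G_v+u$ contains $u$ or $v$ (or both). First I would establish the easy inequality $\tau(G) \leq \tau(G_v+u) \leq \tau(G)+1$: any vertex cover $C$ of $G_v+u$ restricts to a vertex cover of $G$ (giving $\tau(G)\leq \tau(G_v+u)$ after noting $u$ covers only $uv$), and conversely any $\tau$-set $S$ of $G$ yields the cover $S\cup\{u\}$ of $G_v+u$ of size $\tau(G)+1$. Thus the whole question reduces to deciding whether $\tau(G_v+u)=\tau(G)$ or $\tau(G_v+u)=\tau(G)+1$.

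Next I would characterize exactly when $\tau(G_v+u)=\tau(G)$. I claim this happens if and only if $v$ lies in some $\tau$-set of $G$, equivalently $v\notin\tau$-anticore$(G)$. For the forward direction, suppose $\tau(G_v+u)=\tau(G)$ and let $C$ be a $\tau$-set of $G_v+u$; since $uv$ must be covered, $C$ contains $u$ or $v$. If $v\in C$, then $C\setminus\{u\}\subseteq V(G)$ covers all edges of $G$ and has size at most $\tau(G)$ while containing $v$, so $v$ lies in a $\tau$-set of $G$. If instead $u\in C$ but $v\notin C$, I would replace $u$ by $v$: the set $(C\setminus\{u\})\cup\{v\}$ still covers every edge of $G$ (and $uv$), has the same cardinality $\tau(G)$, and contains $v$, again placing $v$ in a $\tau$-set. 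For the converse, if $v$ belongs to a $\tau$-set $S$ of $G$, then $S$ already covers $uv$, so $S$ is a vertex cover of $G_v+u$ of size $\tau(G)$, forcing $\tau(G_v+u)\leq\tau(G)$ and hence equality.

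Combining the two steps, $\tau(G_v+u)=\tau(G)+1$ holds if and only if $v$ belongs to no $\tau$-set of $G$, which is precisely the statement $v\in\tau$-anticore$(G)$. This is exactly the claimed equivalence.

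I do not expect a serious obstacle here, as the argument is a routine exchange/replacement analysis on vertex covers. The one point requiring care is the replacement step where $u\in C$ but $v\notin C$: I must verify that swapping $u$ for $v$ does not uncover any edge of $G$, which holds because $u$ is incident only to the edge $uv$ in $G_v+u$, so $u$ contributes nothing to covering $E(G)$, and therefore the edges of $G$ must already be covered by $C\setminus\{u\}$ even before adding $v$. The cleanest presentation keeps the bound $\tau(G)\leq\tau(G_v+u)\leq\tau(G)+1$ explicit so that the biconditional follows by ruling out the intermediate value.
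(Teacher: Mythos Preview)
Your proof is correct and follows essentially the same approach as the paper's: both observe that the new edge $uv$ forces $u$ or $v$ into any cover of $G_v+u$, and then show $\tau(G_v+u)=\tau(G)$ precisely when some $\tau$-set of $G$ contains $v$. Your version is simply more explicit, spelling out the sandwich bound $\tau(G)\leq\tau(G_v+u)\leq\tau(G)+1$ and the swap $u\mapsto v$ in the case $u\in C$, $v\notin C$, which the paper leaves implicit.
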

\begin{proof}
   Let $v$ be a vertex of a graph $G$. Any $\tau$-set of $G_v+u$ contains either $u$ or $v$ to cover the edge $uv$. Therefore if $v\in \tau$-anticore$(G)$, then $\tau(G_v+u)=\tau(G)+1$. Else there exists a $\tau$-set $S$ of $G$ containing $v$ that is also a $\tau$-set of $G_v+u$.
\end{proof}

We show that $\tau$-\textsc{Anticore} remains $\mathsf{NP}$-hard for planar graphs.
\begin{theorem}\label{NPH_tau_anticore_planar}
   $\tau$-\textsc{Anticore} is $\mathsf{NP}$-hard when restricted to planar graphs.
\end{theorem}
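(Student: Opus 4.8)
The plan is to reduce from the general \textsc{$\tau$-Anticore} problem, which is $\mathsf{NP}$-hard by Corollary~\ref{NPH_tau_core_anti}, and to \emph{planarize} a given instance while preserving the Boolean predicate ``$\tau$-anticore$(G)\neq\emptyset$''. Throughout, Proposition~\ref{tau_anticore} is the verification device: to certify that a vertex $v$ is (or is not) in the anticore it suffices to compare $\tau(G_v+u)$ with $\tau(G)$, which turns every claim about anticore membership into a statement about minimum vertex covers, a quantity that behaves additively under the local substitutions we are about to perform. (An alternative route starting from \textsc{Planar Vertex Cover} runs into the problem that coupling $\tau(H)$ to an external threshold seems to require global ``wires'', which is exactly what forces a crossover-style analysis; reducing from the general anticore problem keeps this self-contained.)

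Concretely, I would fix a (possibly non-planar) drawing of the input graph $G$ in the plane in which edges cross pairwise and transversally, and replace each crossing by a fixed planar \emph{crossover gadget} $W$ with four terminals, in the spirit of the planar reductions for \textsc{Vertex Cover} in \cite{GareyJohnson}. Each occurrence of $W$ must (i) be planar and attachable without creating new crossings; (ii) contribute a constant $c_W$ to the vertex-cover number, so that $\tau(G')=\tau(G)+c_W\cdot(\text{number of crossings})$ for the resulting planar graph $G'$; and (iii) transmit the covering decision along each of the two strands passing through it, in the sense that in every minimum vertex cover the two terminals of a strand are selected consistently (both ``in'' or both ``out'') and independently of the other strand. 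After subdividing the original edges enough times to isolate the crossings, each original edge is simulated by a path through a sequence of gadget terminals, and a minimum vertex cover of $G'$ corresponds canonically to one of $G$.

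For correctness I would prove two things with Proposition~\ref{tau_anticore}. First, every \emph{internal} vertex of each gadget $W$ belongs to at least one minimum vertex cover of $G'$, so no gadget vertex lies in the anticore; this is where the flexibility of $W$ is used, and it is checked by exhibiting, for each internal vertex, a minimum vertex cover containing it. Second, the correspondence between minimum vertex covers of $G$ and of $G'$ preserves, for every original vertex $w$, whether $w$ can be avoided by some minimum vertex cover: using the strand-consistency of (iii) one shows $\tau(G'_w+u)=\tau(G')+1$ if and only if $\tau(G_w+u)=\tau(G)+1$, hence $w\in\tau$-anticore$(G')$ if and only if $w\in\tau$-anticore$(G)$. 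Combining the two points yields $\tau$-anticore$(G')\neq\emptyset$ if and only if $\tau$-anticore$(G)\neq\emptyset$, and since $G'$ is planar and computable in polynomial time, this is the desired reduction.

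The main obstacle is the design and analysis of the crossover gadget $W$: it must simultaneously realise the additive behaviour of (ii) and the strand-consistency of (iii) while being \emph{neutral} for the anticore, i.e.\ having no internal vertex that is missed by all of its minimum vertex covers and not altering the anticore status of its terminals. Establishing these properties amounts to a finite but careful enumeration of the minimum vertex covers of $W$, and of $W$ with each terminal forced in or out, showing that the only constraints $W$ imposes are precisely the two strand equalities, with every other configuration of the terminals extendable to a minimum vertex cover of the same cost. Once such a gadget is exhibited, the remainder of the argument is routine bookkeeping via Proposition~\ref{tau_anticore}.
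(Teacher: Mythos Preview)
Your plan is the paper's plan: reduce from general $\tau$-\textsc{Anticore} by applying a vertex-cover crossover gadget at each crossing of a drawing of $G$, obtain a planar $G'$ with $\tau(G')=\tau(G)+c\cdot(\text{crossings})$, and argue that the predicate ``$\tau$-anticore $\neq\emptyset$'' survives. The paper does not design a new gadget; it reuses verbatim the Garey--Johnson--Stockmeyer graph $H$ and its table of values $c[i,j]$.

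Two points where your write-up diverges from what actually works are worth flagging. First, your requirement~(iii) --- that in every minimum vertex cover of the gadget the two terminals of a strand are ``both in or both out'' and the two strands are independent --- is \emph{not} satisfied by $H$: the minimum $c[i,j]=13$ is attained at $(i,j)\in\{(0,0),(0,1),(1,1)\}$, so neither consistency nor independence holds. If you insist on (iii) you will be hunting for a gadget that may not exist; the paper sidesteps this by exploiting the asymmetric cost table and the $180^\circ$ rotational symmetry of $H$ directly. Second, your correctness decomposition asks that internal gadget vertices lie in some minimum vertex cover of $G'$ \emph{unconditionally}. The paper does not prove this and does not need it: it argues the two implications separately. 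For $\tau$-anticore$(G)\neq\emptyset\Rightarrow\tau$-anticore$(G')\neq\emptyset$ it applies Proposition~\ref{tau_anticore} to an original anticore vertex $v$ (noting that $G'_v+u$ is exactly the planarisation of $G_v+u$). For the converse it uses the hypothesis $\tau$-anticore$(G)=\emptyset$ to pick, for each crossing of edges $ab$ and $cd$, two minimum vertex covers $S_1,S_2$ of $G$ with $a,c\in S_1$ and $b,d\in S_2$; extending each by a suitable $13$-vertex cover of $H$ (one the $180^\circ$ rotation of the other) yields two minimum vertex covers of $G'$ that together hit every vertex of that copy of $H$. So the ``gadget vertices are never anticore'' statement is established only under $\tau$-anticore$(G)=\emptyset$, which is exactly when it is needed.
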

\begin{proof}
   Garey, Johnson and Stockmeyer in \cite{Garey} proved that \textsc{Vertex Cover} (referred to as \textsc{Node Cover}) is $\mathsf{NP}$-complete when restricted to planar graphs. To that end, they gave a polynomial reduction from \textsc{Vertex Cover} with no restrictions on the graph. We claim that the graph $G'=(V',E')$ constructed from $G=(V,E)$ has the following property: $\tau$-anticore$(G')\neq \emptyset$ if and only if $\tau$-anticore$(G)\neq \emptyset$. First we give an overview of their proof. \\

   Given a graph $G=(V,E)$, we construct a graph $G'=(V',E')$. Embed $G$ in the plane, allowing edges to cross but so that no more than two edges meet at any point, and no edge cross a vertex other than its own endpoint. Then replace each crossing by a copy of $H$ as shown in Figure \ref{crossing_edges} to build $G'$. For each $i,j$, where $0\leq i,j\leq 2$, let $c[i,j]$ be the cardinality of a minimal vertex cover $C$ of $H$ such that $\vert \{v_1,v_1'\}\cap C\vert = i$ and $\vert \{v_2,v_2'\}\cap C\vert = j$. Note that $v_1,v_1',v_2,v_2'$ are the four corners of $H$, see Figure \ref{planar_H}. The following properties hold:

   \begin{table}[H]
      \centering
      \begin{tabular}{l r r r}
         \toprule
         \diagbox[linewidth=.05em,outerrightsep=1pt]{j}{i} & 0 & 1 & 2 \\ [0.5ex]
         \midrule
         0 & 13 & 14 & 15 \\
         1 & 13 & 13 & 14 \\
         2 & 14 & 14 & 15 \\
         \bottomrule
      \end{tabular}
      \caption{The values of $c[i,j]$.}
      \label{table_crossing_H}
   \end{table}

   Hence $\tau(H)=13$ and for every vertex cover $S$ of $G'$, $\vert S\cap H\vert \geq 13$. From a $\tau$-set $S$ of $G$, you can build a $\tau$-set $S'$ of $G'$ by taking all corresponding vertices of $S$ in $G'$ and exactly $13$ vertices for each copy of $H$. So if $\tau(G)=k$, then $\tau(G')=k+13d$, where $d$ is the number of copy of $H$ (i.e.\ the number of crossings). This completes the overview of the proof of Garey et al.\

   \begin{claim}\label{anticore1}
      if $\tau$-anticore$(G)\neq \emptyset$, then $\tau$-anticore$(G')\neq \emptyset$.
   \end{claim}

   Let $v$ be a vertex in $\tau$-anticore$(G)$. From Property \ref{tau_anticore} we know that $\tau(G) = \tau(G_v+u)+1$. Since $G'_v+u$ can be constructed from $G_v+u$, we have $\tau(G'_v+u)=\tau(G_v+u)+13$ and therefore $v\in \tau$-anticore$(G')$. Hence if a vertex $v$ belongs to $\tau$-anticore$(G)$, then $v$ also belongs to $\tau$-anticore$(G')$. This proves Claim \ref{anticore1}.

   \begin{claim}\label{anticore2}
      if $\tau$-anticore$(G)=\emptyset$, then $\tau$-anticore$(G')=\emptyset$.
   \end{claim}

   Suppose that $\tau$-anticore$(G)=\emptyset$. Let $ab, cd$ be two edges of $G$ that are crossing. We construct the graph $G_H$, a copy of $G$ with the crossing point replaced by the graph $H$ as described above. W.l.o.g.\ we can assume that there exist two minimum vertex cover $S_1, S_2$ of $G$ such that $a,c\in S_1$ and $b,d\in S_2$. Let $S_2'$ be the set of black vertices represented in Figure \ref{planar_H}. Note that $\vert S_2'\vert =13$ and that $S_2'$ is a minimum vertex cover of $H$. Also $S_2'$ is covering the edges $av_1$ and $cv_2$, while $S_2$ is covering the edges $bv_1'$ and $dv_2'$. Hence $S_2\cup S_2'$ is a vertex cover of $G_H$ and since $\tau(G_H)=\tau(G)+13$, it is minimum. Now consider the $180$ degree rotational symmetry of Figure \ref{planar_H}. Let $S_1'$ be the set of black vertices of this symmetry. Note that $S_1'$ is covering the edges $bv_1'$ and $dv_2'$ while $S_1$ is covering the edges $av_1$ and $cv_2$. Hence $S_1\cup S_1'$ is a vertex cover of $G_H$ and since $\tau(G_H)=\tau(G)+13$, it is minimum. From these two $\tau$-sets of $G_H$, we have $\tau$-anticore$(G_H)=\emptyset$. By iterating until there is no more edge crossing, we obtain $G'$ such that $\tau$-anticore$(G')=\emptyset$. This proves Claim \ref{anticore2}. \\

   Then from Claim \ref{anticore1} and \ref{anticore2}, it follows that $\tau$-anticore$(G') \neq \emptyset$ if and only if $\tau$-anticore$(G) \neq \emptyset$. This completes the proof.
\end{proof}

\begin{figure}[H]
   \centering
   \includegraphics[width=15cm, height=5cm, keepaspectratio=true]{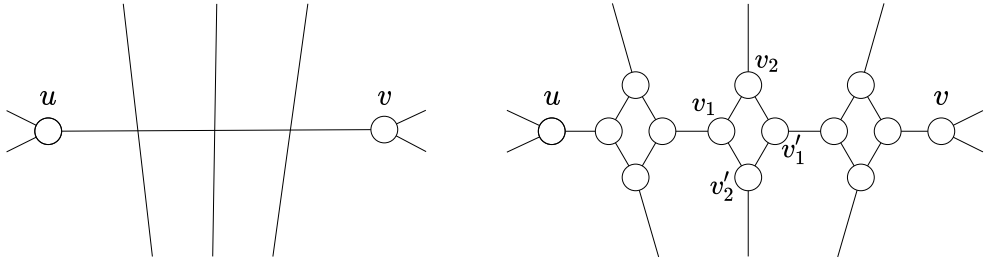}
   \caption{Removal of crossing edges of $G$ to construct $G'$ where each $C_4$ represent a copy of the graph $H$ of Figure \ref{planar_H}.}
   \label{crossing_edges}
\end{figure}

\begin{figure}[H]
   \centering
   \includegraphics[width=15cm, height=8cm, keepaspectratio=true]{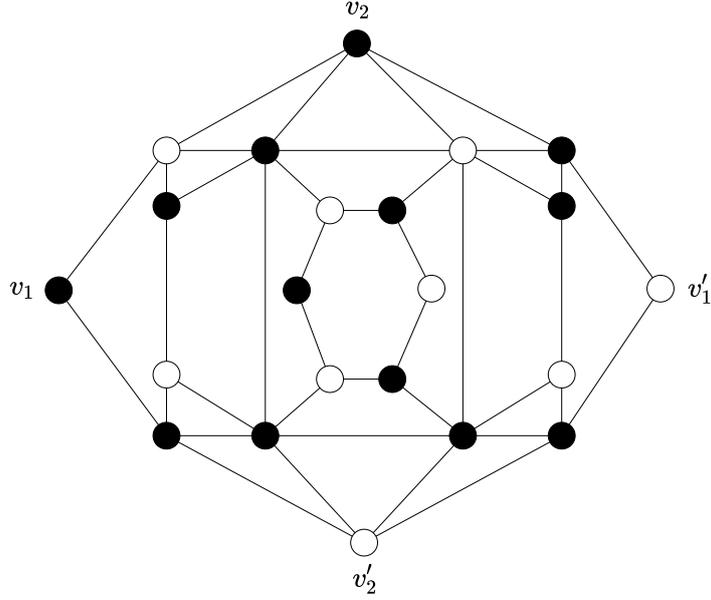}
   \caption{The planar graph $H$ with a minimum vertex cover represented by the black vertices.}
   \label{planar_H}
\end{figure}

Since for any graph $G$, we have $\tau$-anticore$(G)=\alpha$-core$(G)$, it follows:
\begin{coro}
   $\alpha$-\textsc{Core} is $\mathsf{NP}$-hard for planar graphs.
\end{coro}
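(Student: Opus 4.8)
The plan is to observe that $\alpha$-\textsc{Core} and $\tau$-\textsc{Anticore} are, as decision problems, one and the same. As recorded in the preliminaries, for every graph $G$ we have $\alpha$-core$(G)=\tau$-anticore$(G)$, because a set $S$ is an $\alpha$-set of $G$ exactly when $V\setminus S$ is a $\tau$-set of $G$. Consequently $\vert\alpha$-core$(G)\vert\geq 1$ if and only if $\vert\tau$-anticore$(G)\vert\geq 1$, so the two problems have identical yes-instances on every input graph.

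First I would invoke this identity to set up the identity map as a (trivial) polynomial-time reduction: given a planar instance $G$ of $\tau$-\textsc{Anticore}, return $G$ itself as an instance of $\alpha$-\textsc{Core}. Since $G$ is unchanged, it is still planar, and by the identity above the answer to $\alpha$-\textsc{Core} on $G$ agrees with the answer to $\tau$-\textsc{Anticore} on $G$. Thus any polynomial-time algorithm solving $\alpha$-\textsc{Core} on planar graphs would solve $\tau$-\textsc{Anticore} on planar graphs in polynomial time. By Theorem~\ref{NPH_tau_anticore_planar} the latter is $\mathsf{NP}$-hard, so $\alpha$-\textsc{Core} is $\mathsf{NP}$-hard for planar graphs as well.

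There is essentially no obstacle here: all the substantive work was carried out in establishing Theorem~\ref{NPH_tau_anticore_planar}, where the crossing-gadget construction of Garey, Johnson and Stockmeyer is shown to preserve the (non)emptiness of the anticore. The only point worth checking is that the reduction stays within the planar class, which is immediate because it is the identity map on graphs and does not alter $G$ at all.
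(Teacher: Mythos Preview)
Your proposal is correct and matches the paper's own argument: the corollary is derived directly from Theorem~\ref{NPH_tau_anticore_planar} via the identity $\alpha$-core$(G)=\tau$-anticore$(G)$, which makes the two decision problems coincide on every (planar) instance. The paper states this in a single line, and your write-up simply spells out the same observation as an identity reduction.
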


\section{Complexity results}

In this section, we show that $1$-\textsc{Bondage} is $\mathsf{NP}$-hard in planar graphs with maximum degree $3$. The two following remarks will be of used in the proofs below.
\begin{rmk}\label{noedgecritical}
   Let $uv$ be an edge of a graph $G=(V,E)$. If there exists a minimum dominating set $S$ of $G$ such that $u,v\not\in S$ or $u,v\in S$, then $uv$ is not a $\gamma$-critical edge.
\end{rmk}

\begin{rmk}\label{novertexcritical}
   Let $v$ be a vertex of a graph $G=(V,E)$. If there exists a minimum dominating set $S$ of $G$ such that $v\not\in S$ and $\vert S\cap N(v)\vert\geq 2$, then every edge incident with $v$ is not $\gamma$-critical.
\end{rmk}

Using Theorem \ref{NPH_tau_anticore_planar}, we show the $\mathsf{NP}$-hardness of the $1$-\textsc{Bondage} problem in planar graphs.

\begin{theorem}\label{NPH_1-Bondage_planar}
   The $1$-\textsc{Bondage} problem is $\mathsf{NP}$-hard when restricted to planar graphs of maximum degree $3$.
\end{theorem}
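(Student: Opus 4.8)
The plan is to reduce from $\tau$-\textsc{Anticore} restricted to planar graphs, which is $\mathsf{NP}$-hard by Theorem \ref{NPH_tau_anticore_planar}. Given a planar graph $G=(V,E)$, I want to build in polynomial time a planar graph $G'$ of maximum degree $3$ that has a $\gamma$-critical edge if and only if $\tau$-anticore$(G)\neq\emptyset$. The engine of the reduction is the correspondence between minimum vertex covers of $G$ and minimum dominating sets of the auxiliary graph $G^{+}$ obtained by adding, for every edge $e=xy\in E$, a new vertex $z_e$ adjacent to both $x$ and $y$ while keeping $xy$. A vertex cover of $G$ dominates $G^{+}$, and conversely any dominating set of $G^{+}$ can be pushed onto $V$ without increasing its size (replace each chosen $z_e$ by an endpoint of $e$); hence $\gamma(G^{+})=\tau(G)$, and $G^{+}$ stays planar since each $z_e$ can be drawn inside a face incident to $e$. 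The crucial observation, mirroring Property \ref{tau_anticore}, is that attaching a single pendant vertex at $v$ to $G^{+}$ raises the domination number by $1$ if $v\in\tau$-anticore$(G)$ and by $0$ otherwise: when $v$ lies in no minimum vertex cover the dominating sets of size $\tau(G)$ never contain $v$, so the new pendant is undominated and forces an extra vertex, whereas if some minimum vertex cover contains $v$ the pendant is dominated for free.

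The second step turns this ``add a pendant'' test into an ``erase an edge'' test, so that it is detected by the bondage number. To each vertex $v$ I attach a small, self-dominating gadget that contributes a fixed cost to $\gamma$ regardless of the behaviour of $v$ (so the gadgets are dormant and $\gamma(G')=\tau(G)+c$ for a constant $c$ counting the gadgets), and that contains one distinguished edge $e_v$ whose deletion strips a vertex $t_v$ adjacent to $v$ of its internal dominator, forcing $t_v$ to behave exactly like a pendant hung on $v$. By the engine above, $\gamma(G'-e_v)=\gamma(G')+[\,v\in\tau\text{-anticore}(G)\,]$, so $e_v$ is $\gamma$-critical precisely when $v\in\tau$-anticore$(G)$; this already yields the ``if'' direction, since an anticore vertex of $G$ produces a $\gamma$-critical edge of $G'$.

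The third step enforces $\Delta(G')\le 3$. The only vertices of large degree are the original vertices $v$, whose degree in $G^{+}$ is $2\deg_G(v)$; I replace each such $v$ by a planar gadget of degree-$3$ vertices --- a cycle of ``copies'' joined by small forcing subgraphs --- arranged so that in every minimum dominating set the copies behave coherently (all playing the role of $v$ being chosen, or all not), so that the substitution changes $\gamma$ only by a controlled additive constant and preserves planarity. The test gadget of $v$ and its edge $e_v$ are hung on one designated copy, so that the equivalence ``$e_v$ is $\gamma$-critical $\Leftrightarrow v\in\tau$-anticore$(G)$'' survives the splitting.

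Finally, for the ``only if'' direction I must show that if $\tau$-anticore$(G)=\emptyset$ then $G'$ has no $\gamma$-critical edge at all. Here Remark \ref{noedgecritical} and Remark \ref{novertexcritical} carry the bulk of the work: for every structural edge I exhibit a minimum dominating set placing both endpoints inside $S$ or both outside $S$, or a vertex outside $S$ with at least two neighbours in $S$, which certifies non-criticality; for each test edge $e_v$ the emptiness hypothesis supplies a minimum vertex cover containing $v$, which, translated through the $\gamma=\tau+c$ dictionary, keeps $t_v$ dominated after deletion. I expect the main obstacle to be exactly this gadget bookkeeping: the test gadget must be fragile in precisely one edge and robust everywhere else --- avoiding accidentally $\gamma$-critical edges, a trap that naive pendant-leaf gadgets fall into, since isolating a leaf by an edge deletion is typically already critical --- and the degree-reduction gadget must simultaneously preserve planarity, the additive relation $\gamma(G')=\tau(G)+c$, and the absence of spurious critical edges. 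Checking all these local conditions through Remark \ref{noedgecritical} and Remark \ref{novertexcritical} is the technical heart of the proof.
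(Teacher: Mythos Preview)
Your strategy coincides with the paper's: reduce from $\tau$-\textsc{Anticore} on planar graphs (Theorem~\ref{NPH_tau_anticore_planar}) and build a planar subcubic $G'$ whose minimum dominating sets encode minimum vertex covers of $G$, equipped with one distinguished edge per vertex $v$ that is $\gamma$-critical exactly when $v\in\tau$-anticore$(G)$. Your $G^{+}$ engine is correct --- $\gamma(G^{+})=\tau(G)$ for $G$ without isolated vertices, and $v$ lies in no $\gamma$-set of $G^{+}$ iff $v\in\tau$-anticore$(G)$ --- and it is a clean way to see why such a reduction ought to exist. The paper does not pass through $G^{+}$; it assembles bespoke vertex- and edge-gadgets from scratch, so that degree reduction, the ``cover/not cover'' switch, and the test edge are fused into a single component rather than layered as you propose.

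The gap is that you never actually build the gadgets, and that is where essentially the entire proof lives. In the paper, the vertex gadget $G_v$ is a cycle of length $3(d_G(v)+1)$ decorated with paws and a pendant path $a\text{--}b\text{--}c\text{--}d$; it is engineered so that its minimum dominating configurations come in exactly two flavours, a ``non-dominating'' one of cost $2d_G(v)+1$ and a ``dominating'' one of cost $2d_G(v)+2$, with the leaf edge $cd$ serving as the test edge. The edge gadget $H_{uv}$ costs $2$ if covered from one side and $3$ otherwise, yielding the exact formula $\gamma(G')=6m+n+\tau(G)$. Your degree-reduction sketch (``copies behave coherently, all chosen or all not'') does not address the key constraint that the two global states of the replacement for $v$ must differ in cost by \emph{exactly one}; if coherence makes the ``chosen'' state cost $d_G(v)$ extra, or zero extra, the dictionary $\gamma=\tau+c$ collapses. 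More importantly, the ``only if'' direction is not a soft corollary of Remarks~\ref{noedgecritical} and~\ref{novertexcritical}: in the paper it is a long case analysis (Claims~\ref{planar6} and~\ref{planar7}) that, for every single edge of $G'$, exhibits an explicit $\gamma$-set witnessing non-criticality --- and those witnesses can only be written down once the gadgets are concrete. You correctly flag this bookkeeping as the technical heart, but a proof has to carry it out rather than announce it.
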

\begin{proof}
   We give a polynomial reduction from $\tau$-\textsc{Anticore} which has been shown to be $\mathsf{NP}$-hard for planar graphs in Theorem \ref{NPH_tau_anticore_planar}. From any connected planar graph $G=(V,E)$, we build a connected planar graph $G'=(V',E')$ with maximum degree $3$. Let $n=\vert V(G)\vert$ and $m=\vert E(G)\vert$. \\

   For each vertex $v$ of $G$ we associate the connected component $G_v$. The component $G_v$ is as follows:

   \begin{itemize}
      \item An induced cycle $C_v$ of length $3l+3$, where $l=d_G(v)$, such that,\\ $C_v=v_0-\bar v_0-v_0'-v_1-\bar v_1-v_1'-\cdots-v_l-\bar v_l-v_l'-v_0$;
      \item An induced path $P_v=a-b-c-d$ that is connected to $C_v$ by the edge $av_0$;
      \item To each pair $v_i, v_i'$ of $C_v$, where $i=1,\ldots,d_G(v)-1$, is associated an induced paw $P^i=\{a_i,b_i,c_i,d_i\}$ where $\{b_i,c_i,d_i\}$ is a triangle, $a_ib_i$ is an edge, and $P^i$ is connected to $C_v$ by the edges $a_iv_i$ and $a_iv_i'$.
   \end{itemize}

   For each edge $uv$ of $G$, we create a copy of the graph $H_{uv}$ depicted in Figure \ref{H_uv}. Note that each component $G_v$ and $H_{uv}$ is planar with degree maximum $3$. Moreover each $\bar v_i$ of $G_v$, where $i=1,\ldots,d_G(v)$, has degree $2$. We show how the components connect to each other so that the graph is planar. Consider a planar representation of $G$. Each component $G_v$ is assigned the same position on the plane as its associated vertex $v$ of $G$. We do similarly with each component $H_{uv}$ and its associated edge $uv$ of $G$ so that no components overlap. Embed each induced cycle $C_v$ in the plane as a circle so that the path $P_v$ and the paws $P^i$, where $i=1,\ldots,d_G(v)-1$, are inside. Then for each vertex $v$ of $G$, iterate counterclockwise on the edges incident to $v$ from $i=1$ to $i=d_G(v)$. At each step, let $uv$ be the considered edge incident to $v$, add the edge $\bar v_ih_v$ where $h_v\in V(H_{uv})$. Hence $G'$ is planar with $\Delta(G')\leq 3$. This completes the construction. \\

   In the first part we will prove that $\gamma(G')=6m+n+\tau(G)$. In the last part we will prove that $b(G')=1$ if and only if $\tau$-anticore$(G) \neq \emptyset$. \\

   We prove the following:

   \begin{claim}\label{planar1}
      Let $S'$ be a dominating set of $G'$. For each $G_v$, $\vert S'\cap V(G_v)\vert \geq 2d_G(v)+1$.
   \end{claim}

   Let $G_v$ and $l=d_G(v)$. Let $T$ be a dominating set of $G'\setminus V(G_v)$. From $T$ we construct $S'$ a dominating set of $G'$. Since we intend to prove a lower bound on $\vert S'\cap V(G_v)\vert$, we can assume that $N(V(G_v))\subset T$. Let $S'=T$. Note that the set of vertices $\{\bar v_i \mid i=1,\ldots,l\}$ of $G_v$ is dominated by $S'$.
   Since $v_0-a-b-c-d$ is an induced path in $G'$, such that $N(V(P_v))=v_0$, we add $\{c,v_0\}$ to $S'$.
   For each paw $P^i=\{a_i,b_i,c_i,d_i\}$, where $i=1,\ldots,l-1$, since $G'[\{b_i,c_i,d_i\}]$ is a triangle and that $N(\{b_i,c_i,d_i\})=\{a_i\}$, where $a_ib_i\in E$, we add $b_i$ to $S'$. At this step $\vert S'\cap V(G_v)\vert = l+1$. It remains to dominate $I=\{v_{i-1}',v_{i} \mid i=1,\ldots,l\}$. Note that $I$ is the disjoint unions of $l$ induced paths of size $2$. If neither $v_{i-1}'$, nor $v_{i}$ is dominating, then we can assume that $\bar v_{i-1},\bar v_{i}$ must be dominating (note that $\bar a_{i-1}$ and $\bar a_{i}$ is an equivalent pair to dominate $v_{i-1}',v_{i}$). Hence we must add at least $l$ vertices to $S'$. From our previous arguments, if we add exactly $l$ vertices to $S'$, then either $v_{i-1}'\in S'$, or $v_i\in S'$; and $\bar v_{0}, \bar v_i\not\in S'$, where $i=1,\ldots,l$. This proves Claim \ref{planar1}. \\

   Let $S_v$ be a dominating set of a component $G_v$. We give the following definitions:
   \begin{itemize}
      \item If $\vert S_v\vert=2d_G(v)+1$, then $S_v$ is called a \textit{non-dominating configuration} of $G_v$. Note that from our previous arguments, if $S_v$ is a non-dominating configuration, then $S_v\cap \{\bar v_i \mid i=0,\ldots,d_G(v)\} =\emptyset$. An example of such a configuration is $S_v=\{c,v_0,v_l\}\cup \{b_i,v_i \mid i=1,\ldots,l-1\}$, where $l=d_G(v)$.
      \item If $\vert S_v\vert\geq 2d_G(v)+2$ and $\bar v_i\in S_v$, for some $\bar v_i\in V(G_v)$, then $S_v$ is called a \textit{semi-dominating configuration} of $G_v$.
      \item If $\vert S_v\vert=2d_G(v)+2$ and $\{\bar v_i \mid i=1,\ldots,d_G(v)\}\subset S_v$, then $S_v$ is called a \textit{dominating configuration} of $G_v$. Note that a dominating configuration is a restricted case of a semi-dominating configuration. An example of a dominating configuration is $S_v=\{c,v_0,v_0',\bar v_l\}\cup \{b_i, \bar v_i \mid i=1,\ldots,l-1\}$, where $l=d_G(v)$.
   \end{itemize}

   From Claim \ref{planar1}, given a dominating set $S'$ of $G'$, we have for every component $G_v$ that either $S_v=S'\cap V(G_v)$ is a non-dominating configuration or a semi-dominating configuration. \\

   Let $H_{uv}$ be the associated component of an edge $uv$ of $G$, and $\bar u\in V(G_u)$, $\bar v\in V(G_v)$, such that $h_u\bar u, h_v\bar v \in E$. Let $S'$ be a minimum dominating set of $G'$. One can easily check the following claim.

   \begin{claim}\label{planar2}
      If $\bar u,\bar v\not\in S'$, then $\vert S'\cap V(H_{uv})\vert \geq 3$. Else $\vert S'\cap V(H_{uv})\vert= 2$.
   \end{claim}

   Let $S'$ be a dominating set of $G'$. Consider the sets $S'_v=S'\cap V(G_v)$, $S'_u=S'\cap V(G_u)$ and $S'_{uv}=S'\cap V(H_{uv})$. By Claim \ref{planar2}, $\vert S'_{uv}\vert = 2$ if and only if $S'_u$ or $S'_v$ is a semi-dominating configuration, where $\bar u_i\in S'$ or $\bar v_i\in S'$. In this case, we say that $S_{uv}'$ is a \textit{covered configuration} and that $H_{uv}$ is covered by $S'$. When $\bar u_i\in S'$ (resp. $\bar v_i\in S'$), then we say that $G_u$ (resp. $G_v$) is \textit{covering} $H_{uv}$. Examples of covered configurations where $G_u$ or $G_v$ cover $H_{uv}$ are displayed in Figure \ref{H_uv_dom}, \ref{H_uv_dom3}, and \ref{H_uv_dom2}.

   We define as $S'_G$ the set of $G_v$ such that $S_v'$ is a semi-dominating configuration in $S'$, and $S'_H$ the set of $H_{uv}$ that are not covered by $S'$. From Claim \ref{planar2}, if a component $H_{uv}$ is not covered, then $\vert S'_{uv}\vert \geq 3$. The following lower bound follows:
   \begin{equation}\label{gamma_lbound}
      \begin{split}
      \vert S\vert & \geq 2m + \sum_{v\in V(G)} (2d_G(v)+1) + \vert S_G\vert + \vert S_H\vert \\
         & \geq 6m + n + \vert S_G\vert + \vert S_H\vert
      \end{split}
   \end{equation}

    We say that a dominating set $S'$ of $G'$ is the \textit{associated dominating set} of a minimum vertex cover $S$ of $G$, if the following is true:
    \begin{itemize}
       \item If $v\not\in S$, then $S'_v$ is a non-dominating configuration of $G_v$;
       \item If $v\in S$, then $S'_v$ is a dominating configuration of $G_v$;
       \item For each edge $uv$, then $S'_{uv}$ is a covered configuration of $H_{uv}$ (depending on which of $G_u$ or $G_v$ is covering $H_{uv}$).
    \end{itemize}

   We prove the following:

   \begin{claim}\label{planar3}
      $\gamma(G') \leq 6m + n + \tau(G)$.
   \end{claim}

   Let $S'$ be the associated dominating set of a vertex $S$ of $G$. Since each $H_{uv}$ is covered by $G_u$ or $G_v$, we have $S'$ a dominating set of $G'$ such that $\vert S'_G\vert =\tau(G)$ and $\vert S'_H\vert =0$. It follows that $\vert S' \vert = 6m+n+\tau(G)$. This proves Claim \ref{planar3}.



   \begin{claim}\label{planar4}
      $\gamma(G') \geq 6m + n + \tau(G)$.
   \end{claim}

   Let $p=6m+n+\tau(G)$. By contradiction, suppose there exists a dominating set $S'$ of $G'$ such that $\vert S'\vert < p$. From (\ref{gamma_lbound}) we have $\vert S'_G\vert + \vert S'_H\vert  < \tau(G)$. From $S'$ we construct a vertex cover $S$ of $G$. For each component $G_v$ of $S'_G$, we add $v$ to $S$. Now it remains to cover the edges associated with the components of $S'_H$. For each component $H_{uv}$ of $S'_H$, let $uv$ be its associated edge in $G$: we add one of its endpoints, say $u$, to $S$. Thus $S$ is a vertex cover of $G$ such that $\vert S\vert < \tau(G)$, a contradiction. This proves Claim \ref{planar4}. \\

   From Claim \ref{planar3} and \ref{planar4}, it follows:

   \begin{claim}\label{planar5}
      $\gamma(G') = 6m + n + \tau(G)$.
   \end{claim}

   From the arguments used to prove Claim \ref{planar3}, and from Claim \ref{planar5}, we have that an associated dominating set of a minimum vertex cover is a minimum dominating set. \\

   In the remaining part, we show that $b(G')=1$ if and only if $\tau$-anticore$(G)\neq \emptyset$. First, we prove that if $\tau$-anticore$(G)=\emptyset$, then $b(G')>1$. Therefore we show that no edge with an endpoint in $H_{uv}$ and no edge in $G_v$ is $\gamma$-critical in $G'$.

   \begin{claim}\label{planar6}
      If $\tau$-anticore$(G)=\emptyset$, then for every $H_{uv}$, no edge with an endpoint in $H_{uv}$ is $\gamma$-critical in $G'$.
   \end{claim}

   Let $uv\in E$ and $H_{uv}$ be its associated component. Let $S$ be a minimum vertex cover of $G$ and $S'$ be its associated dominating set in $G'$.

   First suppose that $u,v\in S$. Consider the covered configuration $S'_{uv}$ highlighted in Figure \ref{H_uv_dom3}. From Remark \ref{noedgecritical} and \ref{novertexcritical}, no bold edge is $\gamma$-critical. Then for the three remaining non-bold edges, we take the symmetric covered configuration highlighted in Figure \ref{H_uv_dom4}. Hence from Remark \ref{noedgecritical} and \ref{novertexcritical}, no edge with an endpoint in $H_{uv}$ is $\gamma$-critical in $G'$.

   Second, we may assume that $u\in S$ and $v\not\in S$. Since $\tau$-anticore$(G')=\emptyset$, there is another minimum vertex cover $T$ of $G$ such that $u\not\in T$ and $v\in T$. Let $T'$ be the associated dominating set of $T$ in $G'$. Then $G_u$ (resp. $G_v$) is covering $H_{uv}$ in $S'$ (resp. $T'$). Therefore we consider the highlighted vertices of Figure \ref{H_uv_dom} (resp. Figure \ref{H_uv_dom5}) for the covered configuration $S_{uv}'$ (resp. $T_{uv}'$). From Remark \ref{noedgecritical} and \ref{novertexcritical}, no bold edge is $\gamma$-critical. Hence no edge with an endpoint in $H_{uv}$ is $\gamma$-critical. This proves Claim \ref{planar6}.

   \begin{claim}\label{planar7}
      If $\tau$-anticore$(G)=\emptyset$, then for every component $G_{v}$, no edge in $G_v$ is $\gamma$-critical in $G'$.
   \end{claim}

   Let $v\in V$, $l=d_G(v)$, and $G_{v}$ be its associated component.

   Suppose that $v\not\in \tau$-core$(G)$. Since $\tau$-anticore$(G)=\emptyset$, there exists $S$ and $T$, two $\tau$-sets of $G$, such that $v\not\in S$ and $v\in T$. Let $S', T'$ be two associated dominating sets of $S,T$, respectively. Note that each $H_{uv}$ is covered by $G_u,G_v$, in $S',T'$, respectively. Hence we can assume that $S_{uv}'$ is as depicted in Figure \ref{H_uv_dom2}. Note that $S_v'$ corresponds to a non-dominating configuration while $T_v'$ corresponds to a dominating configuration of $G_v$. We give a non-dominating configuration for $S_v'$ and a dominating configuration for $T_v'$:

   \begin{enumerate}
      \item $S_v'=Z\cup \{c,v_0\}\cup \{w_i \mid w_i\in \{b_i,c_i,d_i\}, i=1,\ldots,l-1\}$ where $Z=\{v_i \mid i=1,\ldots,l\}$ or $Z=\{v_i' \mid i=0,\ldots,l-1\}$;
      \item $T_v'=W\cup \{\bar v_l\}\cup \{b_i,\bar v_i \mid i=1,\ldots,l-1\}$ where $W=\{c,v_0,v_0'\}$ or $W=\{d,b,\bar v_0\}$ or $W=\{c,v_0,\bar v_0\}$.
   \end{enumerate}

   Hence we can take any previous configurations of $S_v',T_v'$ in $S',T'$, respectively, such that $S'$ and $T'$ are still $\gamma$-sets of $G'$. Then from Remark \ref{noedgecritical} and \ref{novertexcritical}, one can check that no edge of $G_v$ is $\gamma$-critical in $G'$.

   Now we can suppose that $v\in \tau$-core$(G)$. Since $\tau$-anticore$(G)=\emptyset$, it follows that for each edge $uv$ of $G$, there exists $S$ a minimum vertex cover of $G$ such that $u,v\in S$. Let $u$ be a neighbor of $v$, and let $S$ be a minimum vertex cover of $G$ such that $u,v\in S$. Let $S'$ be an associated dominating set in $G'$. Note that $S_u,S_v$ correspond to a dominating configuration of $G_u,G_v$, respectively, and therefore $H_{uv}$ is covered by both $G_u$ and $G_v$. Hence we replace the vertices of $S_{uv}'$ by the dominating black vertices of $H_{uv}$ depicted in Figure \ref{H_uv_dom2}. Thus the vertex $h_v$ dominates the vertex $\bar v_j$ of $G_v$, where $j\in \{1,\ldots,l\}$. We exposed some dominating sets of $G_v$ to replace $S_v'$ in $S$. Note that item 3.\ represents some dominating configurations of $G_v$, while item 4.\ and 5.\ are semi-dominating configurations of size $2d_G(v)+2$ that cover every component $H_{u'v}$, where $u'\neq u$.

   \begin{enumerate}
      \setcounter{enumi}{2}
      \item $S_v'=W\cup \{\bar v_l\}\cup \{b_i,\bar v_i \mid i=1,\ldots,l-1\}$ where $W=\{c,v_0,v_0'\}$ or $W=\{d,b,\bar v_0\}$ or $W=\{c,v_0,\bar v_0\}$;
      \item $S_v'=\{a_j,w_j\}\cup \{c,v_0,v_0',\bar v_l\}\cup \{b_i,\bar v_i \mid i=1,\ldots,l-1, i\neq j\}$, if $j\neq l$, where $w_j\in \{b_j,c_j,d_j\}$;
      \item $S_v'=\{v_j\}\cup \{c,v_0,v_0'\}\cup \{b_i,\bar v_i \mid i=1,\ldots,l-1, i\neq j\}$, if $j=l$.
   \end{enumerate}

   Hence we can take any exposed configurations of $S_v'$ in $S'$ such that $S'$ is still a $\gamma$-set of $G'$. Then from Remark \ref{noedgecritical} and \ref{novertexcritical}, one can check the following: from item 3.\ no edge incident to a vertex $\{a,b,c,d,v_0,\bar v_0,v_0'\}$ is $\gamma$-critical in $G'$; from item 4.\ no edge incident to $\{a_j,b_j,c_j,d_j,v_j,\bar v_j,v_j'\}$ is $\gamma$-critical in $G'$; from 5.\ no edge incident to $\{v_j,\bar v_j,v_j'\}$ is $\gamma$-critical in $G'$. Since we can select any neighbor of $v$ to construct $S'$, these configurations can be applied to any $j\in \{1,\ldots,l\}$, and therefore $G_v$ has no $\gamma$-critical edge in $G'$. This proves Claim \ref{planar7}. \\

   Since every edge of $G'$ is either in a component $G_v$ or has an endpoint in a component $H_{uv}$, it follows from Claim \ref{planar6} and \ref{planar7}, that if $\tau$-anticore$(G)=\emptyset$, then $G'$ has no $\gamma$-critical edge. Hence it follows:

   \begin{claim}\label{planar8}
      If $\tau$-anticore$(G)=\emptyset$, then $b(G')>1$.
   \end{claim}

   It remains to prove the following:

   \begin{claim}\label{planar9}
      If $\tau$-anticore$(G)\neq \emptyset$, then $b(G')=1$.
   \end{claim}

   Let $v\in \tau$-anticore$(G)$. We show that there is no $\gamma$-set of $G'$ such that $G_v$ has a semi-dominating configuration. By contradiction, let $S'$ be a $\gamma$-set of $G'$ such that $S_v'$ is a semi-dominating configuration of $G_v$. From $S'$ we construct a minimum vertex cover $S$ of $G$. For each component $G_u$ of $S_G'$, we add the vertex $u\in V(G)$ to $S$. For each component $H_{uv}$ of $S_H'$, we take one of its endpoints, say $u\in V(G)$, and add it to $S$. From \ref{gamma_lbound} and Claim \ref{planar5} we have $\vert S_G \vert + \vert S_H\vert = \tau(G)$. Thus $\vert S\vert =\tau(G)$. Yet $S$ is a minimum vertex cover of $G$ where $v\in S$, a contradiction. Hence there is no $\gamma$-set of $G'$ where $G_v$ has a semi-dominating configuration.

   Now we are ready to highlight a $\gamma$-critical edge in $G'$. Let $S'$ be a $\gamma$-set of $G'$. From the previous arguments used to prove Claim \ref{planar1}, we have only two possible configurations of $G_v$ in $S_v'$, that are:

   \begin{enumerate}
      \item $S_v'=\{c,v_0,v_l\}\cup \{v_i, w_i \mid w_i\in \{b_i,c_i,d_i\}, i=1,\ldots,d_G(v)-1\}$;
      \item $S_v'=\{c,v_0,v_0'\}\cup \{v_i', w_i \mid w_i\in \{b_i,c_i,d_i\}, i=1,\ldots,d_G(v)-1\}$.
   \end{enumerate}

   Then in every $\gamma$-set $S'$ of $G'$, we have $c\in S'$ and $d\not\in S'$, where $c,d\in V(G_v)$ (precisely $c,d\in V(P_v)$). Recall that $d$ is a leaf. Hence $d$ is dominated exclusively by $c$ in all $\gamma$-set of $G'$, and therefore the edge $cd$ is $\gamma$-critical in $G'$. This proves Claim \ref{planar9}. \\

   From Claim \ref{planar8} and \ref{planar9}, it follows that $b(G')=1$ if and only if $\tau$-anticore$(G)\neq \emptyset$. This completes the proof.
\end{proof}

\begin{figure}[H]
   \centering
   \includegraphics[width=15cm, height=4.2cm, keepaspectratio=true]{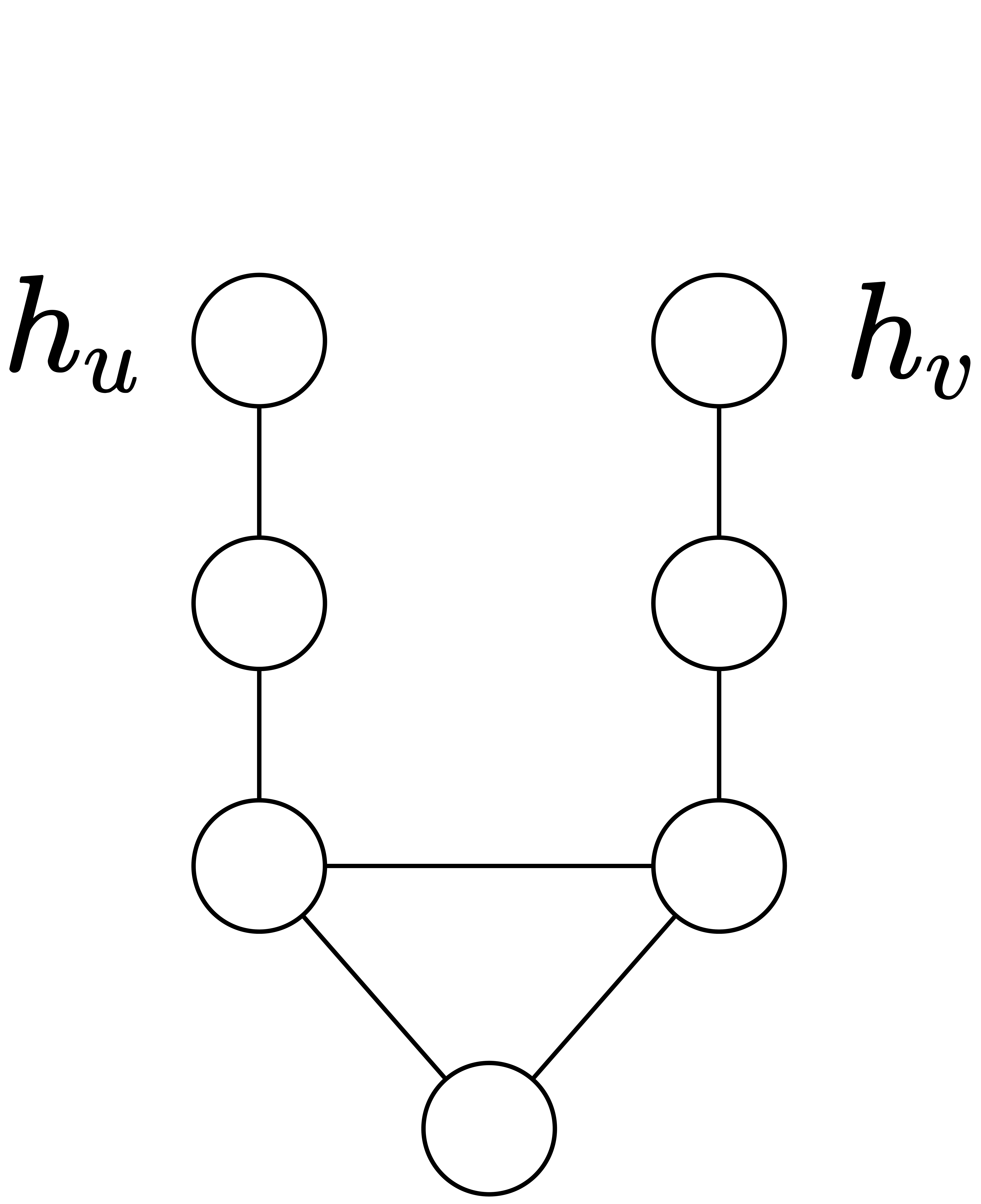}
   \caption{The component $H_{uv}$ associated with an edge $uv$ of $G$.}
   \label{H_uv}
\end{figure}

\begin{figure}[H]
   \centering
   \begin{subfigure}{.25\textwidth}
      \centering
      \includegraphics[width=15cm, height=4.2cm, keepaspectratio=true]{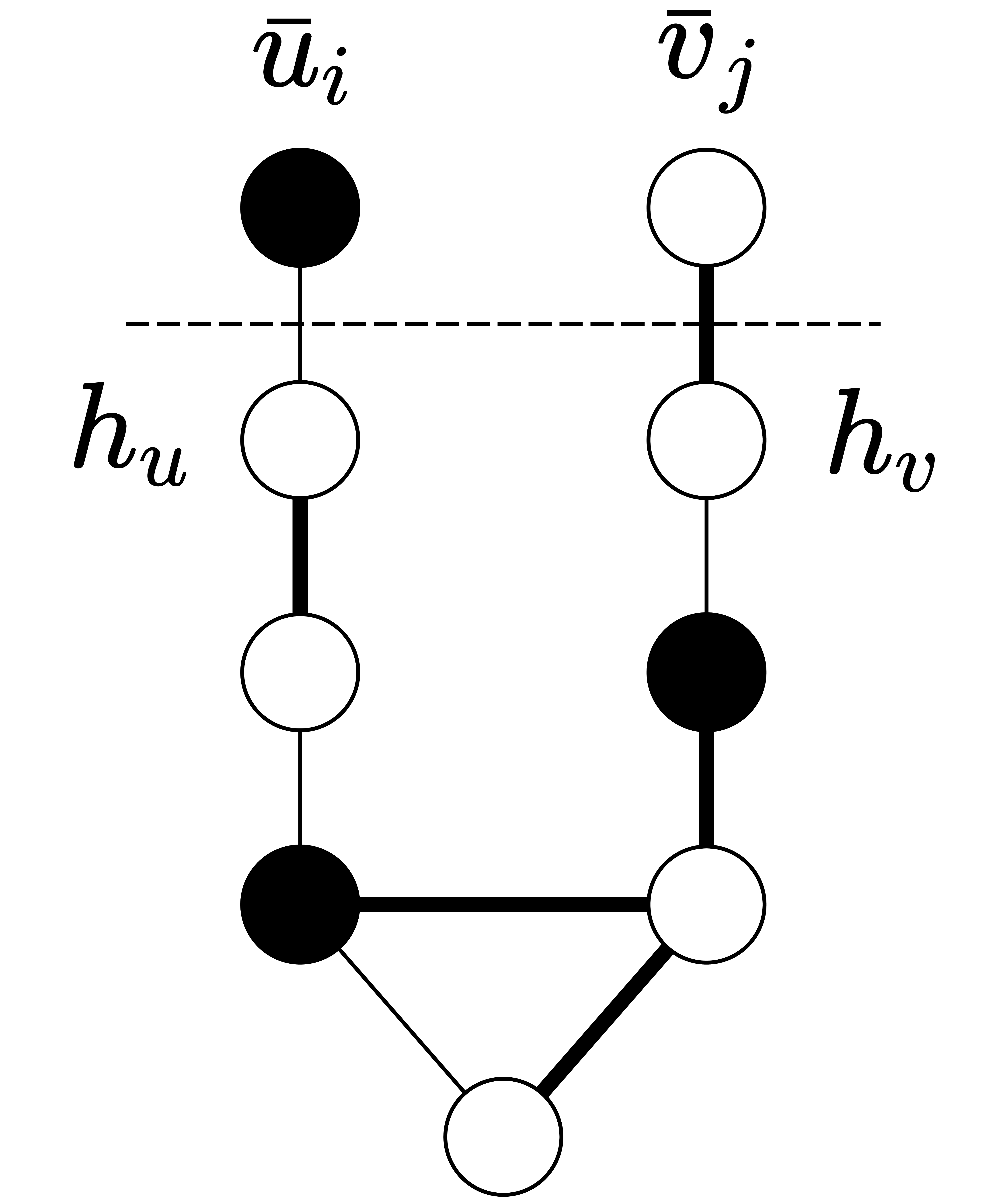}
      \caption{}
      \label{H_uv_dom}
   \end{subfigure}\hfill%
   \begin{subfigure}{.25\textwidth}
      \centering
      \includegraphics[width=15cm, height=4.2cm, keepaspectratio=true]{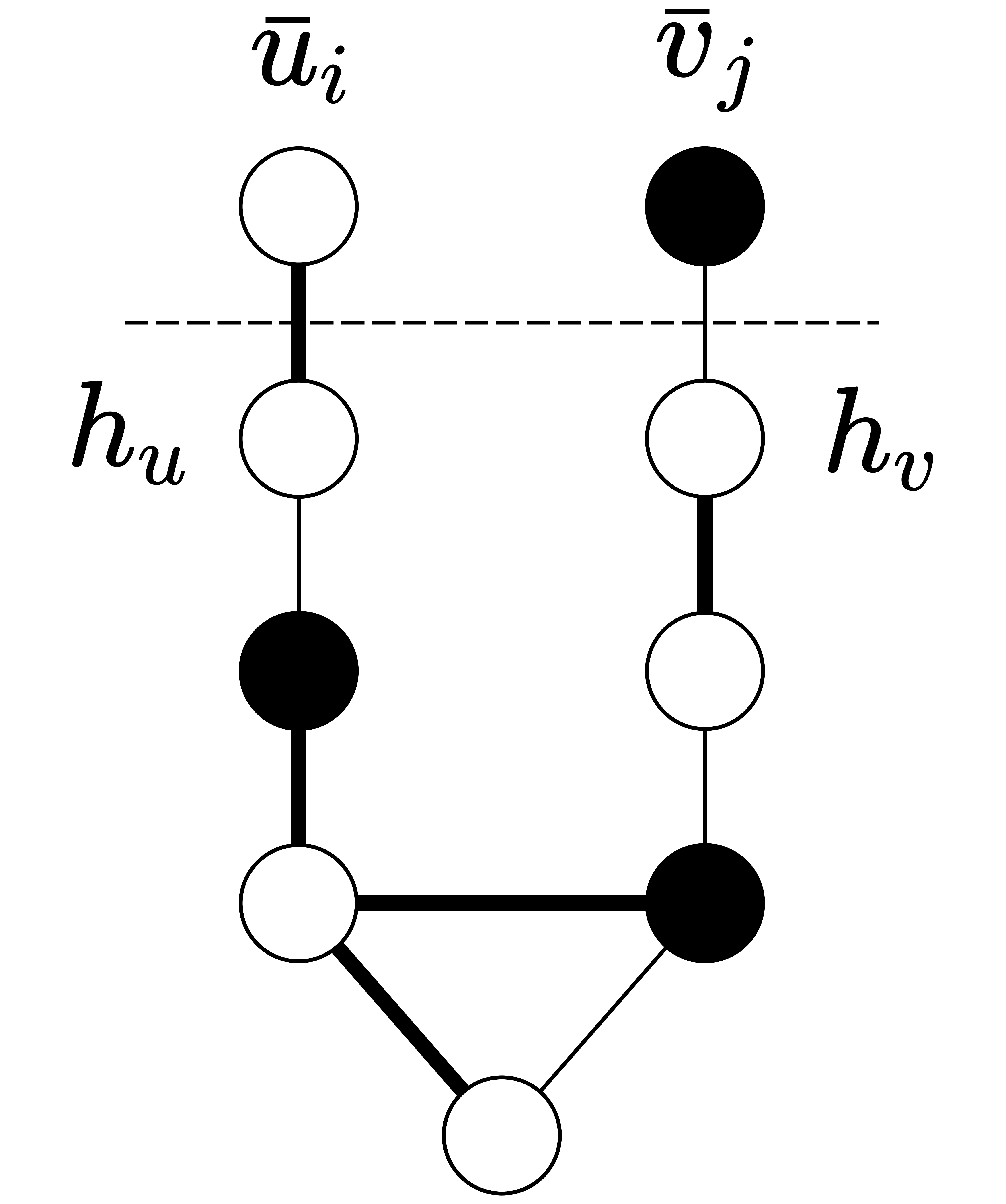}
      \caption{}
      \label{H_uv_dom5}
   \end{subfigure}\hfill%
   \begin{subfigure}{.25\textwidth}
      \centering
      \includegraphics[width=15cm, height=4.2cm, keepaspectratio=true]{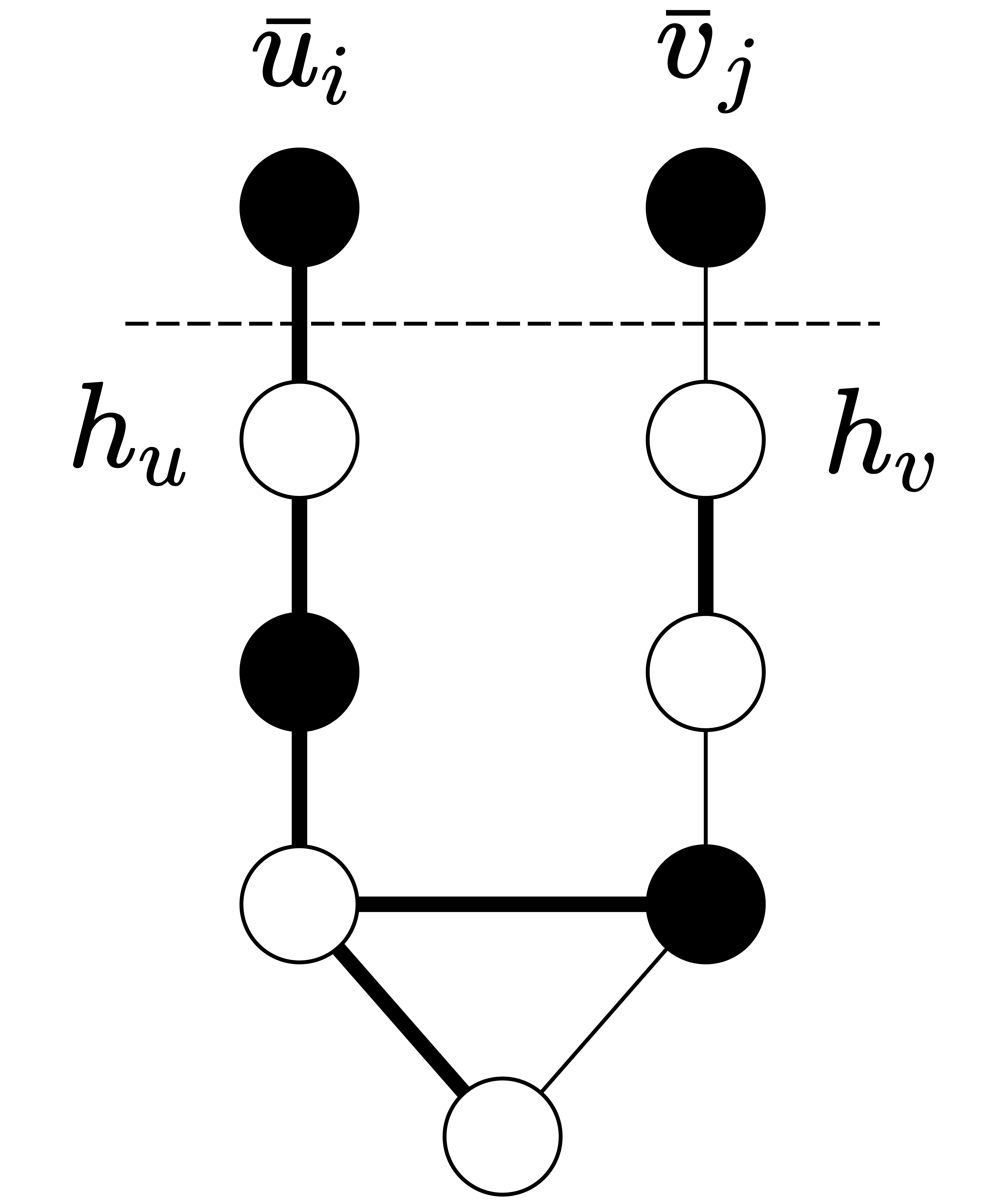}
      \caption{}
      \label{H_uv_dom3}
   \end{subfigure}\hfill%
   \begin{subfigure}{.25\textwidth}
      \centering
      \includegraphics[width=15cm, height=4.2cm, keepaspectratio=true]{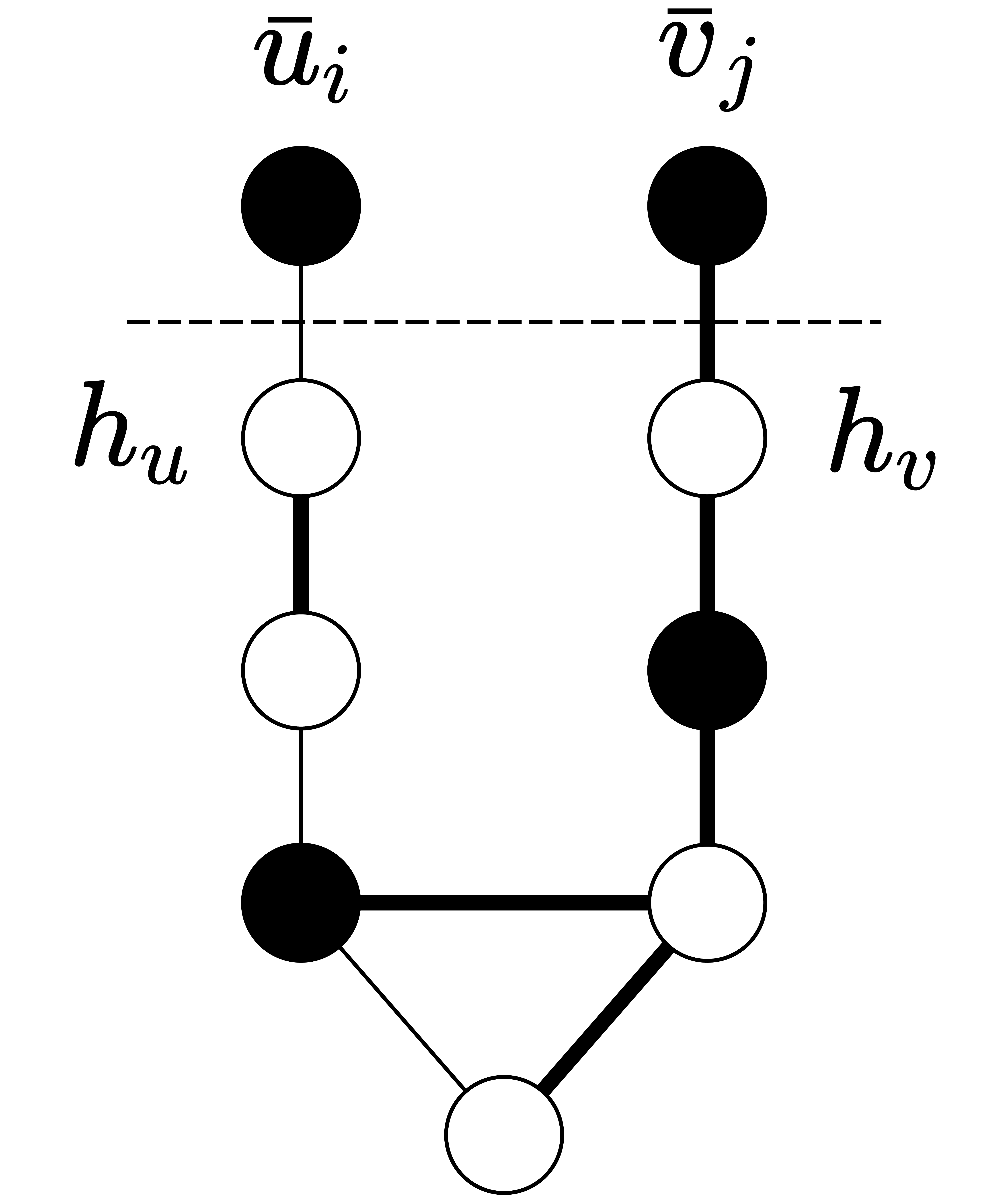}
      \caption{}
      \label{H_uv_dom4}
   \end{subfigure}\hfill%
   \caption{(a) A covered configuration of $H_{uv}$ when $G_u$ is covering; (b) a covered configuration of $H_{uv}$ when $G_v$ is covering; (c,d) two covered configuration of $H_{uv}$ when $G_u$ and $G_v$ are covering.}
\end{figure}

\begin{figure}[H]
   \centering
   \includegraphics[width=15cm, height=4.2cm, keepaspectratio=true]{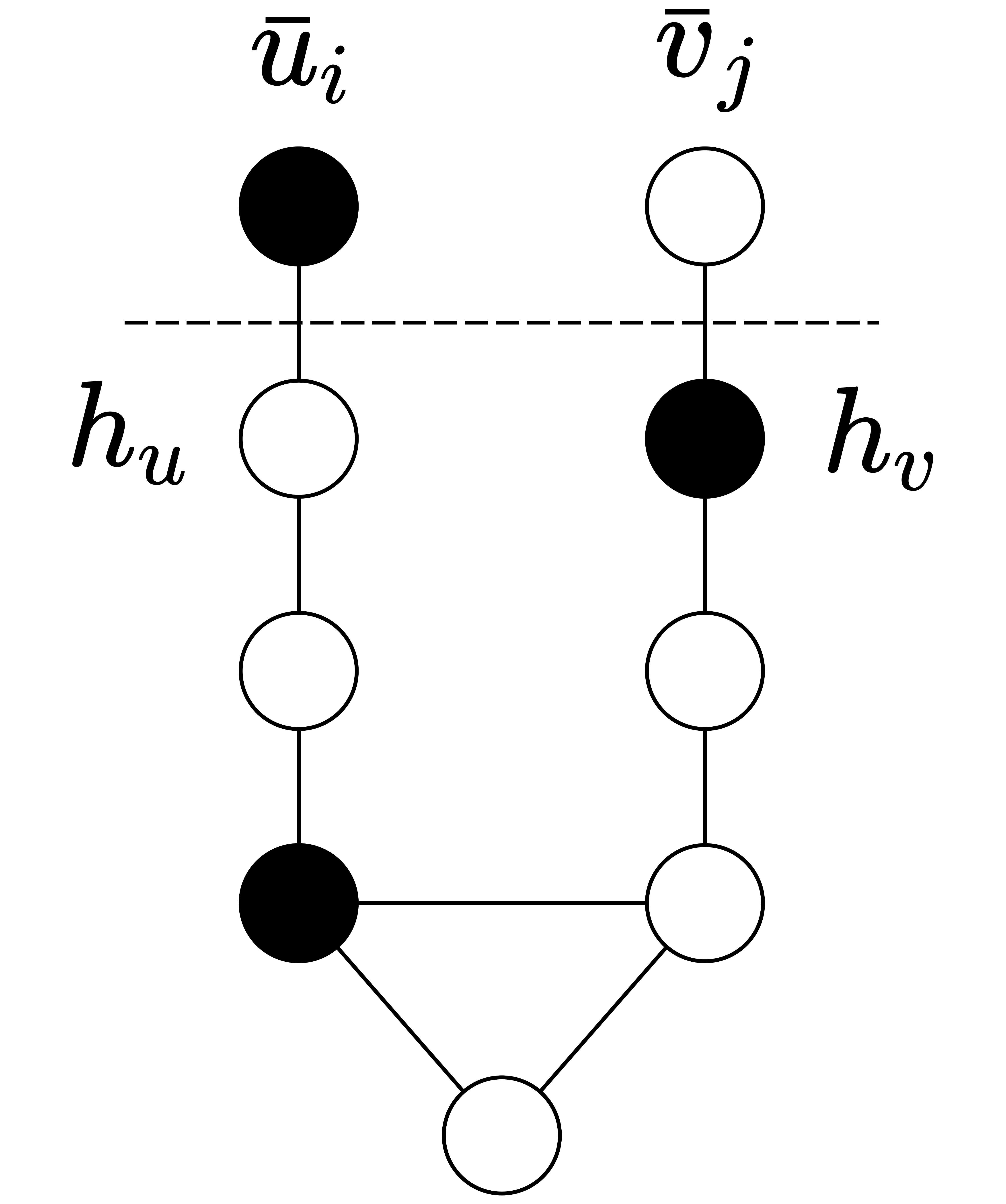}
   \caption{a covered configuration of $H_{uv}$ when $G_u$ is covering and such that $h_v$ is dominating $\bar v_j$.}
   \label{H_uv_dom2}
\end{figure}



Extending the arguments of the proof of Theorem \ref{NPH_1-Bondage_planar}, we can obtain the following:
\begin{theorem}\label{NPH_1-Bondage_planar_claw}
   $1$-\textsc{Bondage} is $\mathsf{NP}$-hard when restricted to planar claw-free graphs of maximum degree $3$.
\end{theorem}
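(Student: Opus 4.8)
The plan is to recycle the reduction from $\tau$-\textsc{Anticore} on planar graphs built in the proof of Theorem \ref{NPH_1-Bondage_planar}, and to replace the gadgets $G_v$ and $H_{uv}$ by planar subcubic analogues that are moreover claw-free. The guiding observation is that in a graph of maximum degree $3$ the only possible centre of an induced $K_{1,3}$ is a $3$-vertex whose three neighbours are pairwise non-adjacent; hence a subcubic graph is claw-free precisely when every $3$-vertex lies in a triangle. So the whole task reduces to redesigning each gadget so that every branching (degree-$3$) vertex belongs to a triangle, while keeping the embedding planar and the maximum degree equal to $3$, and then re-checking that the domination bookkeeping of Claims \ref{planar1}--\ref{planar5} and the critical-edge analysis of Claims \ref{planar6}--\ref{planar9} go through unchanged in spirit.

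First I would locate the claws in the graph $G'$ of Theorem \ref{NPH_1-Bondage_planar}. They sit exactly at the degree-$3$ vertices of the cycles $C_v$ that carry an extra edge: the anchor $v_0$ of the pendant path $P_v$, each pair $v_i,v_i'$ carrying a paw $P^i$, each $\bar v_i$ joined to a component $H_{uv}$, together with the paw apices $a_i$ and whatever branchings occur inside $H_{uv}$ (Figure \ref{H_uv}). The natural cure is to turn the three-vertex blocks $v_i-\bar v_i-v_i'$ of $C_v$ into triangles, producing a \emph{necklace of triangles} in which consecutive triangles are joined by a single edge; then each cycle-attachment vertex and each $\bar v_i$ that reaches out to an $H_{uv}$ automatically lies in a triangle, so no claw survives there. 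The paw attachment and the pendant path must then be re-routed, since the former anchor vertices now already have degree $3$ inside their triangle; I would attach each paw and the pendant gadget through fresh triangle-based junctions chosen so that their branch vertices are again inside triangles, and I would replace $H_{uv}$ by a claw-free planar subcubic component preserving the \emph{covered}/\emph{uncovered} dichotomy of Claim \ref{planar2}.

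With the claw-free gadgets fixed I would re-run the quantitative part: establish the analogue of the configuration trichotomy (non-dominating / semi-dominating / dominating), prove a sharp lower bound mirroring (\ref{gamma_lbound}), and deduce a formula $\gamma(G')=c\cdot m + c'\cdot n+\tau(G)$ for suitable constants $c,c'$ determined by the new per-gadget domination cost. The critical-edge half then follows the same template: I keep a pendant leaf $d$ whose only neighbour $c$ is forced into every $\gamma$-set, so that $cd$ is the sole candidate for a $\gamma$-critical edge, and I re-prove, via Remarks \ref{noedgecritical} and \ref{novertexcritical} applied to the redesigned configurations, that no $\gamma$-set can place $G_v$ in a semi-dominating configuration exactly when $v\in\tau$-anticore$(G)$; this yields $b(G')=1$ iff $\tau$-anticore$(G)\neq\emptyset$.

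The main obstacle is that triangles are \emph{cheap} to dominate --- a single vertex of a triangle dominates all three of its vertices together with one outside neighbour --- so the inflation threatens to create new minimum dominating sets and to soften the rigid two-state behaviour on which the critical-edge argument rests. The delicate work is therefore to dimension the necklace and the junction gadgets so that (i) the exact count $\gamma(G')$ is preserved, (ii) the non-dominating/dominating/semi-dominating classification and the \emph{covered iff an endpoint is chosen} property of $H_{uv}$ both survive, and (iii) the pendant leaf remains the unique origin of a $\gamma$-critical edge. Verifying these three invariants for the claw-free gadgets --- essentially redoing the case analysis behind Claims \ref{planar6} and \ref{planar7} on the new local structures --- is where the real effort lies; everything else is a transcription of the proof of Theorem \ref{NPH_1-Bondage_planar}.
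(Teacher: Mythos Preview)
Your plan is plausible but takes a substantially different route from the paper. Instead of redesigning $G_v$ and $H_{uv}$ from scratch, the paper keeps the graph $G'$ of Theorem \ref{NPH_1-Bondage_planar} intact and eliminates claws one at a time by a fixed local operation $O_c$: each claw centre $v$ is replaced by a single nine-vertex planar subcubic claw-free gadget $H_v$ (Figure \ref{claw}) attached to the three former neighbours $u_1,u_2,u_3$ of $v$. The gadget is engineered so that $\gamma$ rises by exactly $2$ per replacement, and so that no edge incident with $H_v$ is $\gamma$-critical provided the ambient graph admits two $\gamma$-sets placing $u_1,u_2,u_3,v$ in one of two prescribed patterns (cases (a) and (b) in the proof). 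The paper then verifies, claw by claw and referring back to the explicit configurations $1.$--$5.$ already built in Claims \ref{planar6}--\ref{planar7}, that every claw of $G'$ indeed satisfies one of these patterns.

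What this buys is modularity: the entire domination bookkeeping of Theorem \ref{NPH_1-Bondage_planar} is reused verbatim rather than redone, and the new work is confined to a single small gadget plus a short compatibility check against already-established $\gamma$-sets. Your route, by contrast, commits you to rebuilding all of Claims \ref{planar1}--\ref{planar9} for gadgets you have not yet specified; and the danger you yourself flag---that triangles are cheap to dominate and may spawn enough extra $\gamma$-sets to destroy the rigidity behind Claim \ref{planar9}---is exactly what the paper sidesteps by leaving $G'$ untouched and absorbing the claw removal into a controlled local patch. Your approach is not wrong, but the paper's is both shorter and safer.
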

\begin{proof}
   From the graph $G'$ of the proof of Theorem \ref{NPH_1-Bondage_planar}, we construct a planar claw-free graph $H'$ with degree maximum $3$. \\

   First we define the operation $O_c$ to construct an intermediary planar graph $H$ from $G$, such that $\Delta(H)\leq 3$, and with $k-1$ induced claws, where $k$ is the number of induced claws in $G'$. Second we show that $\gamma(H)=\gamma(G')+2$. Last we show that $b(H)=b(G')$. \\

   The operation $O_c$ is as follows: let $v$ be a vertex at the center of an induced claw in $G'$. We replace $v$ by a component $H_v$ as depicted in the right of Figure \ref{claw}. Let $H$ be the graph obtained from this operation. Note that $H$ is planar with $\Delta(H)\leq 3$, and that $H$ has $k-1$ claws.

   The component $H_v$ satisfies the following properties. We have $\gamma(H_v)=3$ and $\{a,b,c\}$ is a $\gamma$-set. The graphs $H_{v}-a$, $H_{v}-b$, $H_{v}-c$ have a unique $\gamma$-set of size two, that are, $\{v_1,v_4\}, \{v_2,v_5\}, \{v_3,v_6\}$, respectively. With one vertex it is not possible to dominate $H_v-\{a,b,c\}$. \\

   First we prove the following:

   \begin{claim}\label{claw1}
      $\gamma(H)\leq \gamma(G')+2$
   \end{claim}

   Let $S'$ be a minimum dominating set of $G'$. From $S'$ we construct a dominating set $T$ of $H$. Let $T=S'$. Suppose that $v\not\in S'$. If $u_1\in S'$, then we add $v_3,v_6$ to $T$. Hence $T$ dominates $H$. The case $u_2\in S'$ and $u_3\in S'$ are symmetric. Now $v\in S'$. Then remove $v$ from $T$ and add the vertices $a,b,c$. Hence $T$ is a dominating set of $H$, and $T$ dominates $u_1,u_2,u_3$. So $\gamma(H)\leq \gamma(G')+2$. This proves Claim \ref{claw1}

   \begin{claim}\label{claw2}
      $\gamma(H) = \gamma(G')+2$
   \end{claim}

   Let $T$ be a minimum dominating set of $H$. From $T$ we construct a dominating set $S'$ of $G'$. Suppose that $\vert T\vert < \gamma(G')+2$. From our previous arguments $\vert T\cap V(H_v)\vert\geq 2$. If $\vert T\cap V(H_v)\vert = 2$, then $a,b,c\not\in T$ and $T\cap \{u_1,u_2,u_3\}\neq \emptyset$. Hence $T\cap V(G')$ is a $\gamma$-set of $G'$, a contradiction. Else $\vert T\cap V(H_v)\vert\geq 3$, and then $T\cap V(G')\cup \{v\}$ is a $\gamma$-set of $G'$ another contradiction. Therefore $\gamma(H)\geq \gamma(G)+2$. So from Claim \ref{claw1}, this proves Claim \ref{claw2}. \\

   Let $S_G$ and $S'_G$ be two minimum dominating sets of $G'$. We show that no edge incident with a vertex of $H_v$ in $H$ is $\gamma$-critical, when $S_G$ and $S'_G$ correspond to one of the two cases described below.

   \begin{enumerate}
      \item[(a)] $u_i,v\in S_G$, and $u_i\in S_G'$ such that $v\not\in S_G'$, where $i\in \{1,2,3\}$.

      Suppose that $u_1=u_i$. From $S_G, S_G'$ we build $S_H, S'_H$, respectively, as follows. Let $S_H=S_G$ and $S'_H=S'_G$. We remove $v$ from $S_H$ and add $\{b,c,w\}$, where $w\in \{a,v_1,v_2\}$. We add $v_3,v_6$ to $S'_H$. From Claim \ref{claw2}, $S_H$ and $S'_H$ are two $\gamma$-sets of $H$. From Remark \ref{noedgecritical} and \ref{novertexcritical}, no edge with an endpoint in $H_{v}$ is $\gamma$-critical. The cases $u_2=u_i$ and $u_3=u_i$ are symmetric.

      \item[(b)] $u_i,u_j\in S_G$, and $u_i,u_l\in S'_G$, such that $v\not\in S_G\cup S'_G$, where $i,j,l\in \{1,2,3\}$, $i\neq j\neq l$.

      Suppose that $u_i=u_1$, $u_j=u_2$ and $u_l=u_3$. From $S_G, S'_G$ we build $S_H, S'_H$, respectively, as follows. Let $S_H=S_G$ and $S'_H=S'_G$. We add $\{v_1,v_4\}$ or $\{v_3,v_6\}$ to $S_H$. We add $\{v_2,v_5\}$ or $\{v_3,v_6\}$ to $S'_H$. From Claim \ref{claw2}, $S_H$ and $S'_H$ are two $\gamma$-sets of $H$. From Remark \ref{noedgecritical} and \ref{novertexcritical}, no edge with an endpoint in $H_{v}$ is $\gamma$-critical. The remaining cases are symmetric.
   \end{enumerate}

   We are now reading to prove the following:

   \begin{claim}\label{claw3}
      If $b(G')\geq 2$, then no edge with an endpoint in $H_v$ is $\gamma$-critical in $H$.
   \end{claim}

   When $b(G')\geq 2$ we show that for each vertex $v\in V(G')$ that is at the center of a claw, there is $S_G$ and $S'_G$ as described in one of two items described above. We do so by referencing the five enumerated dominating and non-dominating configurations of a component $G_v$ that are exposed in the proof of Theorem \ref{NPH_1-Bondage_planar}, see the proof of Claim \ref{planar9} items 1.\ to 5.\ page 11. These dominating configurations are used to construct $\gamma$-sets of $G'$ when $b(G')=2$ (i.e.\ $\tau$-anticore$(G)=\emptyset$). Since each center of a claw in $G'$ is in a component $G_v$, we consider the configurations for each claw of items 1.\ and 2.\ and check if there are some configurations as described in items (a) or (b). Then we do similarly for the configurations of items 3.\ 4.\ and 5. For each claw, we give the configurations that correspond to case (a) or (b).

   First, we focus on the claw $G'[\{a,v_0,\bar v_0,v_l'\}]$, where $v_0$ is at its center. From item 2.\ we have $\bar v_0, v_0\in S_G$ and $\bar v_0\in S_G'$, $v_0\not\in S_G'$. This correspond to case (a) where $u_i=\bar v_0$ and $v=v_0$. From item 3.\ we have $v_0,\bar v_0\in S_G$ and $\bar v_0\in S_G'$, $v_0\not\in S_G'$. This correspond to case (a) where $u_i=\bar v_0$ and $v=v_0$.

   Now, we deal with the claw $G'[\{a_i,b_i,v_i,v_i'\}]$, where $a_i$ is at its center. From item 1.\ we have $b_i,v_i\in S_G$ and $b_i,v_i'\in S_G'$ such that $a_i\not\in S_G\cup S_G'$. This correspond to case (b) where $u_i=b_i$, $u_j=v_i$, $u_l=v_i'$, and $v=a_i$. From item 4.\ we have $b_i,a_i\in S_G$ (see $b_j,a_j$) and $b_i\in S_G'$, $a_i\not\in S_G'$ (see $b_i, a_i$). Note that these configurations work for $i,j=1,\ldots,l$, $i\neq j$. This correspond to case (a), where $u_i=b_i$ and $v=a_i$.

   Last, it remains the claw $G'[\{h_v,v_i,v_i',\bar v_i\}]$, where $\bar v_i$ is at its center. From item 1.\ we have $h_vv_i\in S_G$ or $h_vv_i'\in S_G'$ and $\bar v_i\not\in S_G\cup S_G'$. We recall that $h_v$ is dominating because item 1.\ is a non-dominating configuration (see the paragraph above item 1.\ where it is mentioned that $S_{uv}'$ is as depicted in Figure \ref{H_uv_dom2}). Hence this correspond to case (b) where $u_i=h_v$, $u_j=v_i$ and $u_l=v_i'$. From item 3.\ $h_v,\bar v_i \in S_G$. From item 4.\ $h_v\in S_G$, $\bar v_i\not\in S_G'$ (see $\bar v_j$). Note that these configurations work for $i,j=1,\ldots,l$, $i\neq j$. For similar reasons, the vertex $h_v$ is dominating (see the paragraph above item 3.). Thus this correspond to case (a) where $u_i=h_v$ and $v=v_0$. One can check that we gave dominating configurations for each claw of $G'$. So Claim \ref{claw3} is proved.

   \begin{claim}\label{claw4}
      $b(H)=1$ if and only if $b(G')=1$.
   \end{claim}

   Suppose that $b(G')=1$. In this case, we know that $cd$ is $\gamma$-critical in $G'$ (see the end of the arguments used to proved Claim \ref{planar9} in Theorem \ref{NPH_1-Bondage_planar}). Since $c,d$ are not vertices of an induced claw in $G'$, we have $cd\in H$. Hence from Claim \ref{claw2} it follows that $\gamma(H-cd)=\gamma(G'-cd)+2$, and therefore $cd$ is $\gamma$-critical in $H$.

   Now suppose that $b(G')\geq 2$. Suppose that $b(H)=1$. Let $xy$ be a $\gamma$-critical edge in $H$. From \ref{claw3}, no edge incident to an induced $H_v$ of $H$ is $\gamma$-critical. Therefore $xy\in E(G')$. Yet from Claim \ref{claw2} it follows that $\gamma(G'-xy)=\gamma(H-xy)-2$ and so $b(G')=1$, a contradiction. This proves Claim \ref{claw4}. \\

   From Claim \ref{claw2} and \ref{claw4}, by applying the operation $O_c$ iteratively until there is no induced claw, we obtain a planar claw-free graph $H$ with degree maximum $3$ such that $b(G')=b(H)$. This completes the proof.
\end{proof}

\begin{figure}[H]
   \centering
   \begin{subfigure}{.45\textwidth}
   \centering
   \includegraphics[width=15cm, height=3.5cm, keepaspectratio=true]{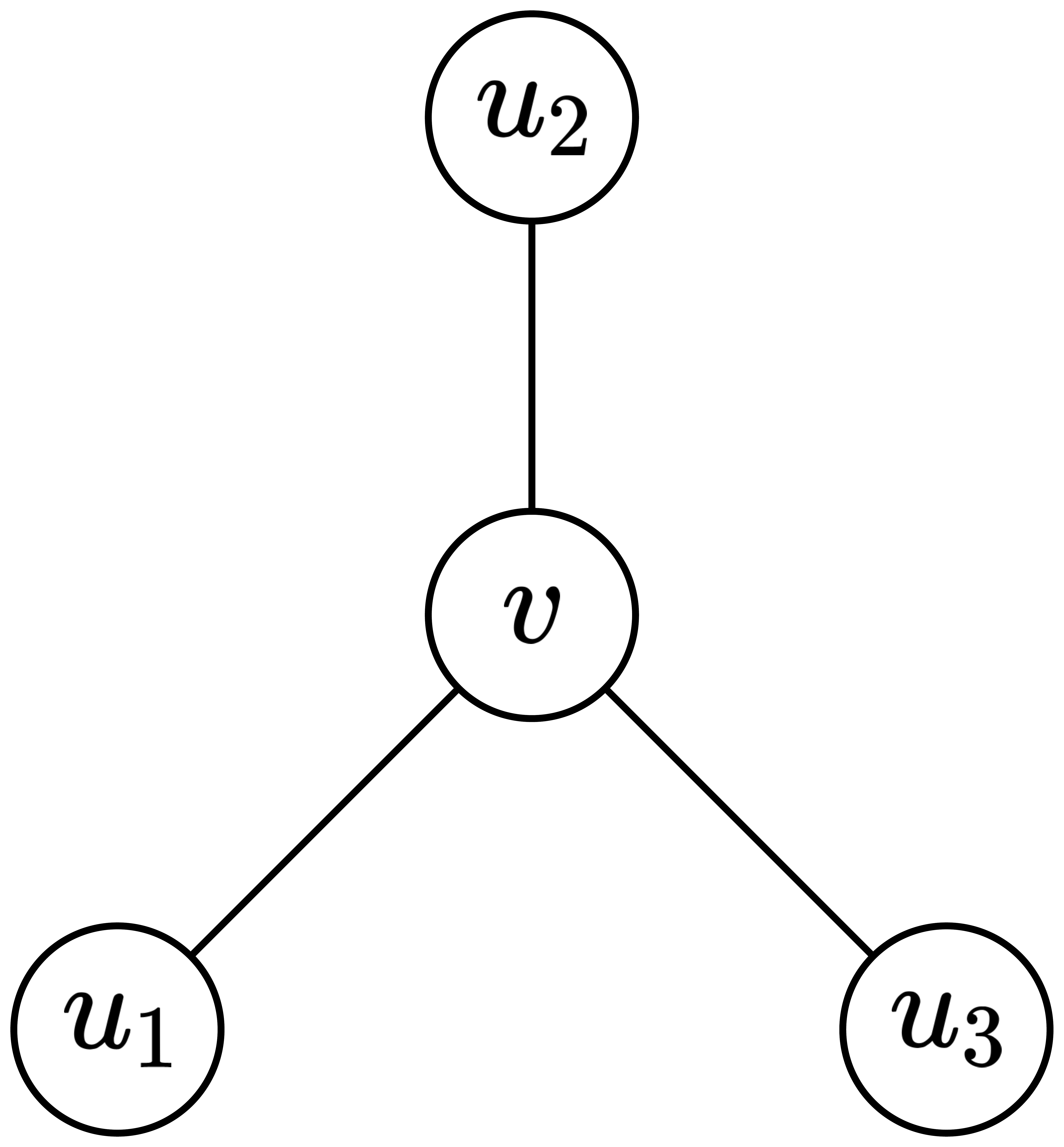}
   \end{subfigure}\hfill%
   \begin{subfigure}{.50\textwidth}
   \centering
   \includegraphics[width=15cm, height=6cm, keepaspectratio=true]{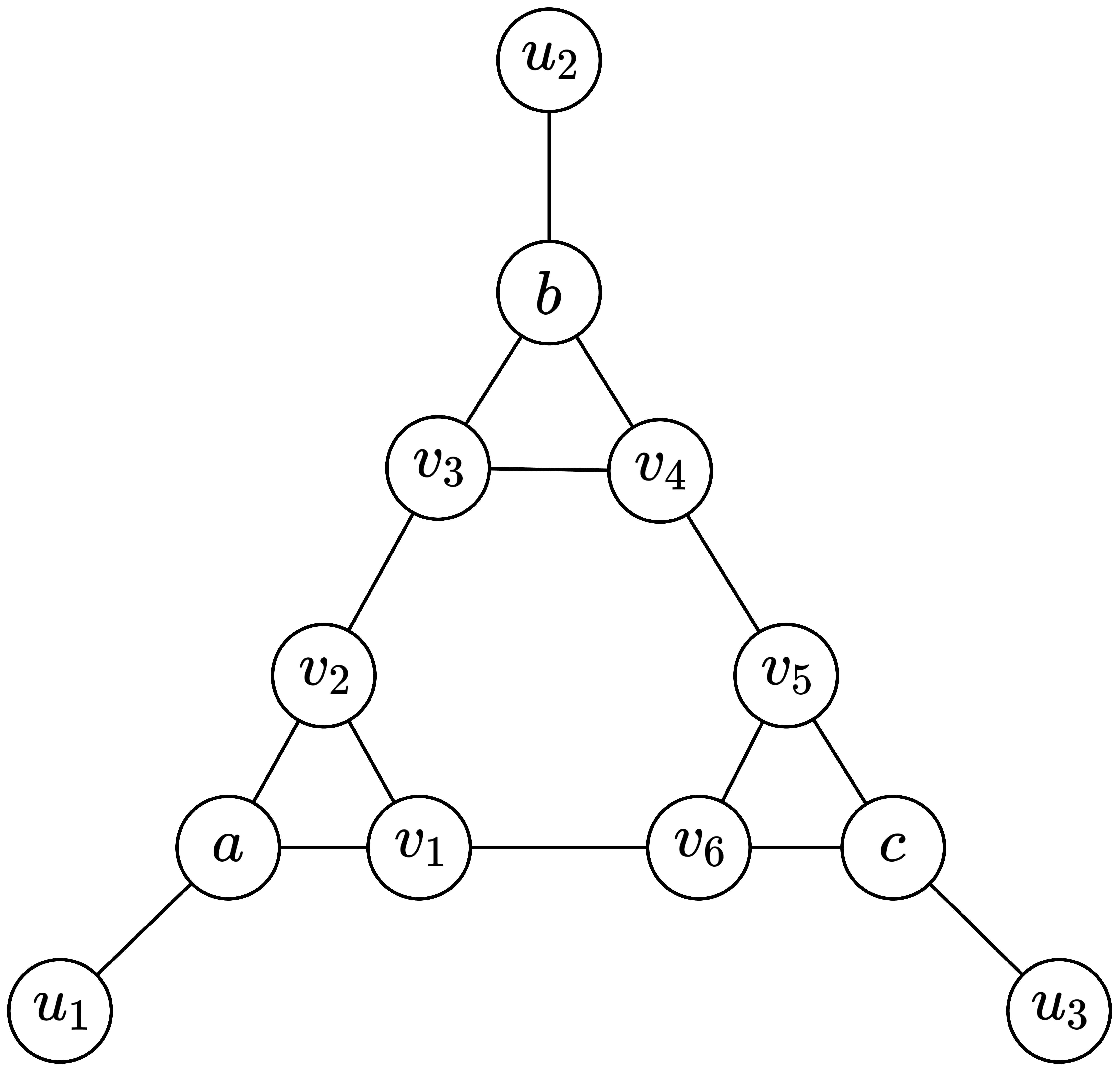}
   \end{subfigure}\hfill%
   \caption{On the left, an induced claw in $G'$, on the right, the corresponding component $H_v$.}
   \label{claw}
\end{figure}

\begin{theorem}\label{NPH_1-Bondage_planar_cubic}
   The $1$-\textsc{Bondage} is $\mathsf{NP}$-hard when restricted to $3$-regular planar graphs.
\end{theorem}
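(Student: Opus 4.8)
The plan is to start from the planar graph $G'$ of maximum degree $3$ produced in the proof of Theorem \ref{NPH_1-Bondage_planar}, for which $b(G')=1$ if and only if $\tau$-anticore$(G)\neq\emptyset$, and to transform it into a $3$-regular planar graph $G''$ by attaching \emph{degree-completion gadgets} to every vertex of degree $1$ or $2$. I would require the construction to satisfy three properties: $G''$ is planar and $3$-regular; $\gamma(G'')=\gamma(G')+c$ for some constant $c$ easily computed from the number of attached gadgets; and a $\gamma$-critical edge exists in $G''$ if and only if one exists in $G'$. Granting these, the reduction is polynomial and, combined with Theorem \ref{NPH_tau_anticore_planar}, yields the claim. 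As in the proof of Theorem \ref{NPH_1-Bondage_planar_claw}, I would apply the gadget replacements iteratively and argue, through Remark \ref{noedgecritical} and Remark \ref{novertexcritical}, that no edge incident with a newly added gadget can become $\gamma$-critical.

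For the degree-$2$ vertices (for instance $a,b,c$ of each $P_v$, the vertices $\bar v_0, v_0', v_l, v_l'$ of each $C_v$, and $c_i,d_i$ of each paw) I would attach, through a single new edge, a small planar gadget whose only vertex of degree $2$ is its hook, the remaining gadget vertices being internally $3$-regular (e.g.\ built from triangles or diamonds). The gadget must be \emph{neutral}: it should admit a minimum dominating set of fixed size $t$ that dominates the whole gadget without ever needing to dominate the hooked vertex from outside, so that the surrounding graph is neither helped nor obstructed, and it should possess several distinct minimum dominating sets realizing every relevant membership pattern of its boundary vertices. This flexibility is precisely what lets me invoke Remark \ref{noedgecritical} and Remark \ref{novertexcritical} to certify that neither the hook edge nor any internal gadget edge is $\gamma$-critical, so that these gadgets change $\gamma$ by the additive constant $t$ per gadget and never spuriously create a critical edge.

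The delicate point is the pendant path $P_v=a-b-c-d$ and, in particular, the leaf $d$: in $G'$ the edge $cd$ is the designed $\gamma$-critical edge, and its criticality already tracks the $\tau$-anticore condition, being critical exactly when every $\gamma$-set is forced into the efficient, non-dominating configuration (cf.\ the proof of Claim \ref{planar9}). Since a $3$-regular graph has no leaf, I would replace $d$ by a gadget $D$ attached to $c$ that \emph{emulates a leaf}: $D$ is internally $3$-regular except for a single boundary vertex $d_1$ joined to $c$, it can be dominated by a fixed number of internal vertices while leaving $d_1$ to be dominated by $c$, and dominating $d_1$ from inside $D$ costs exactly one extra vertex. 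This makes $cd_1$ behave exactly as $cd$ did: when the surrounding $G_v$ is tight (i.e.\ $\tau$-anticore$(G)\neq\emptyset$, forcing the efficient configuration) the vertex $c$ is the unique dominator of $d_1$ and $cd_1$ is $\gamma$-critical, whereas when slack is available (a semi-dominating configuration, as in the $W=\{d,b,\bar v_0\}$ options of item~3 in the proof of Theorem \ref{NPH_1-Bondage_planar}) there is an alternative $\gamma$-set placing $d_1$ in the dominating set, and $cd_1$ is not critical. I would also complete the degrees of $a,b,c$ with neutral gadgets as above, checking that they do not interfere with this emulation.

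Finally I would assemble the pieces: iterating over all degree-deficient vertices yields a planar $3$-regular graph $G''$ with $\gamma(G'')=\gamma(G')+c$, and the analysis above shows that $G''$ has a $\gamma$-critical edge if and only if $G'$ does, i.e.\ if and only if $\tau$-anticore$(G)\neq\emptyset$; hence $b(G'')=1$ if and only if $\tau$-anticore$(G)\neq\emptyset$, giving the reduction. I expect the main obstacle to be the simultaneous satisfaction of all gadget requirements---exact degree $3$ everywhere, planarity, a constant additive effect on $\gamma$, and enough configurational flexibility to kill every potential new critical edge via Remark \ref{noedgecritical} and Remark \ref{novertexcritical}---together with the leaf-emulation gadget whose criticality must faithfully mirror the original dependence on $\tau$-anticore. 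Verifying this last point across all gadget edges and all dominating and semi-dominating configurations is where the bulk of the technical work will lie.
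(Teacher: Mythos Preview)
Your proposal is correct and follows essentially the same strategy as the paper: attach planar degree-completion gadgets at every vertex of degree less than $3$ and argue that the resulting cubic graph has bondage number $1$ if and only if the input does. The paper streamlines this by treating it as a black-box reduction from arbitrary subcubic planar instances (proving $\gamma(H)=\gamma(G)+2$ and $b(H)=1\Leftrightarrow b(G)=1$ once, with two explicit gadgets $H_1,H_2$ whose hook vertices lie in the $\gamma$-anticore), so in particular the leaf $d$ is kept rather than replaced and no dedicated leaf-emulation gadget or case analysis tied to the configurations of Theorem~\ref{NPH_1-Bondage_planar} is needed.
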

\begin{proof}
   We give a polynomial reduction from the $1$-\textsc{Bondage} which has been shown to be $\mathsf{NP}$-hard when restricted to subcubic planar graphs with bondage number at most $2$ in Theorem \ref{NPH_1-Bondage_planar}. From any connected planar graph $G=(V,E)$ such that $\Delta(G)\leq 3$, we construct a $3$-regular planar graph $G'=(V',E')$. Let $H_1$ and $H_2$ be the graph depicted in Figure \ref{deg1} and \ref{deg2}. One can easily check that the following statements are true:

   \begin{itemize}
      \item $\gamma(H_1)=\gamma(H_2)=2$;
      \item $\gamma(H_1 - \{u_1,u_2\})=\gamma(H_2-u)=2$;
      \item $b(H_1 - \{u_1,u_2\})=b(H_2)=2$;
      \item $u_1,u_2\in \gamma$-anticore$(H_1)$ and $u\in \gamma$-anticore$(H_2)$.
   \end{itemize}

   We describe the operation $O_1$ to increase the degree of a $1$-vertex to $3$. Let $v$ be a vertex of degree $1$ in $G$. Take a copy of $H_1$ and connect $v$ to $u_1$ and $u_2$. In the obtained graph $H$, we have $d_H(v)=3$. Since each vertex of $H_1$ in $H$ is of degree three, it follows that $H$ has one less $1$-vertex.

   We describe the operation $O_2$ to increase the degree of a $2$-vertex to $3$. Let $v$ be a vertex of degree $2$ in $G$. Take a copy of $H_2$ and connect $v$ to $u$. In the obtained graph $H$, we have $d_H(v)=3$ and exactly two vertices of $H_2$ in $H$ are $1$-vertices while the other are $3$-vertices. It follows that $H$ has one less $2$-vertex.

   Let $H$ be a graph obtained from $O_i$, and let $H_i$ be the added induced subgraph, where $i\in\{1,2\}$.

   \begin{claim}\label{cubic1}
      $\gamma(H)=\gamma(G)+2$
   \end{claim}

   Since $\gamma(H_i)=2$, we have $\gamma(H)\leq \gamma(G)+2$. Let $S$ be a minimum dominating set of $H$. Suppose that $\vert S\vert < \gamma(G)+2$. From our previous statements, we have $\vert S\cap V(H_i)\vert \geq 2$. If $\vert S\cap V(H_i)\vert = 2$, then from our previous arguments $u_1,u_2\not\in S$ or $u\not\in S$. Therefore $S\setminus H_i$ is a dominating set of $G$ such that $\vert S\setminus H_i\vert < \gamma(G)$, a contradiction. Else $\vert S\cap V(H_i)\vert \geq 3$ and $\{v\}\cup S\setminus V(H_i)$ is a $\gamma$-set of $G$, a contradiction. This proves Claim \ref{cubic1}.

   \begin{claim}\label{cubic2}
      $b(H)=1$ if and only if $b(G)=1$.
   \end{claim}

   Let $A\subseteq E$ such that $\gamma(G-A)=\gamma(G)+1$. From claim \ref{cubic1} $\gamma(H-A)=\gamma(G-A)+2$, and it follows that $b(H)\leq b(G)\leq 2$. Suppose that $b(H)=1$. Assume to the contrary that $b(G)\geq 2$. Let $uv$ be a $\gamma$-critical edge of $H$. Let $H_i=H_1$. There exists $S$ a $\gamma$-set of $G$ where $v\in S$ or the edge incident with $v$ in $G$ is $\gamma$-critical. Since $\gamma(H_1 - \{u_1,u_2\})=2$ and $b(H_1- \{u_1,u_2\})=2$, one can check that no edge incident with $H_1$ is $\gamma$-critical in $H$. Hence $uv\in E$ but then $\gamma(H-uv)=\gamma(G-uv)+2$, a contradiction. Now let $H_i=H_2$. Let $S,S'$ be two $\gamma$-sets of $G,H$, respectively. Since $S\cup S'$ is a $\gamma$-set of $H$ and that $b(H_2)=2$, then $uv$ has no endpoint in $H_2$. Hence $uv\in E$ and therefore $\gamma(H-uv)=\gamma(G-uv)+2$, a contradiction. This proves Claim \ref{cubic2}. \\

   Therefore from Claim \ref{cubic1} and \ref{cubic2}, by applying $O_1$ or $O_2$ until there exists a vertex of degree one or two, we obtain a planar $3$-regular graph $G'$ such that $b(G')=b(G)$. This completes the proof.
\end{proof}

\begin{figure}[H]
   \begin{subfigure}{.50\textwidth}
   \centering
   \includegraphics[width=15cm, height=3cm, keepaspectratio=true]{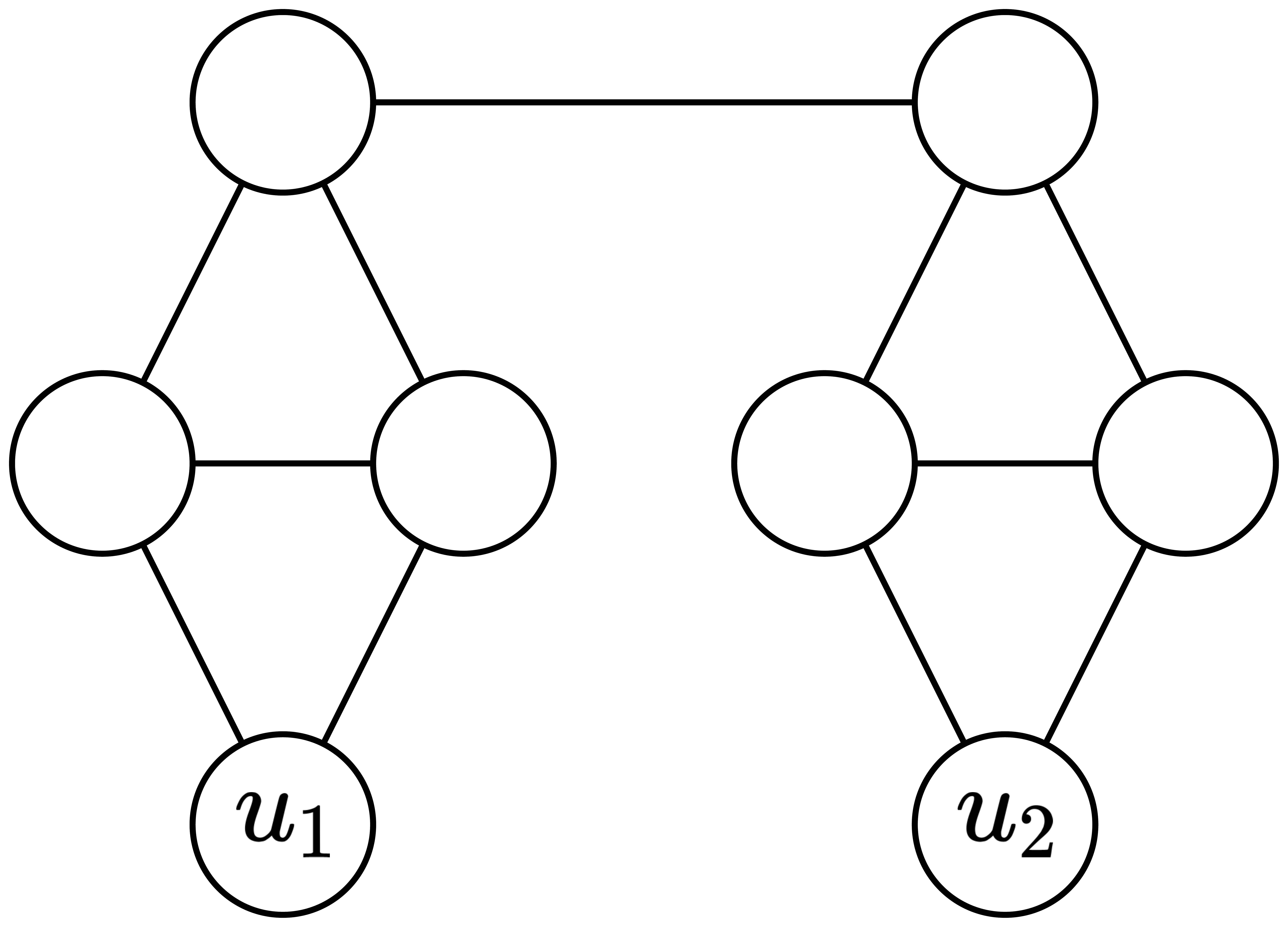}
   \caption{}
   \label{deg1}
   \end{subfigure}\hfill%
   \begin{subfigure}{.50\textwidth}
   \centering
   \includegraphics[width=15cm, height=3cm, keepaspectratio=true]{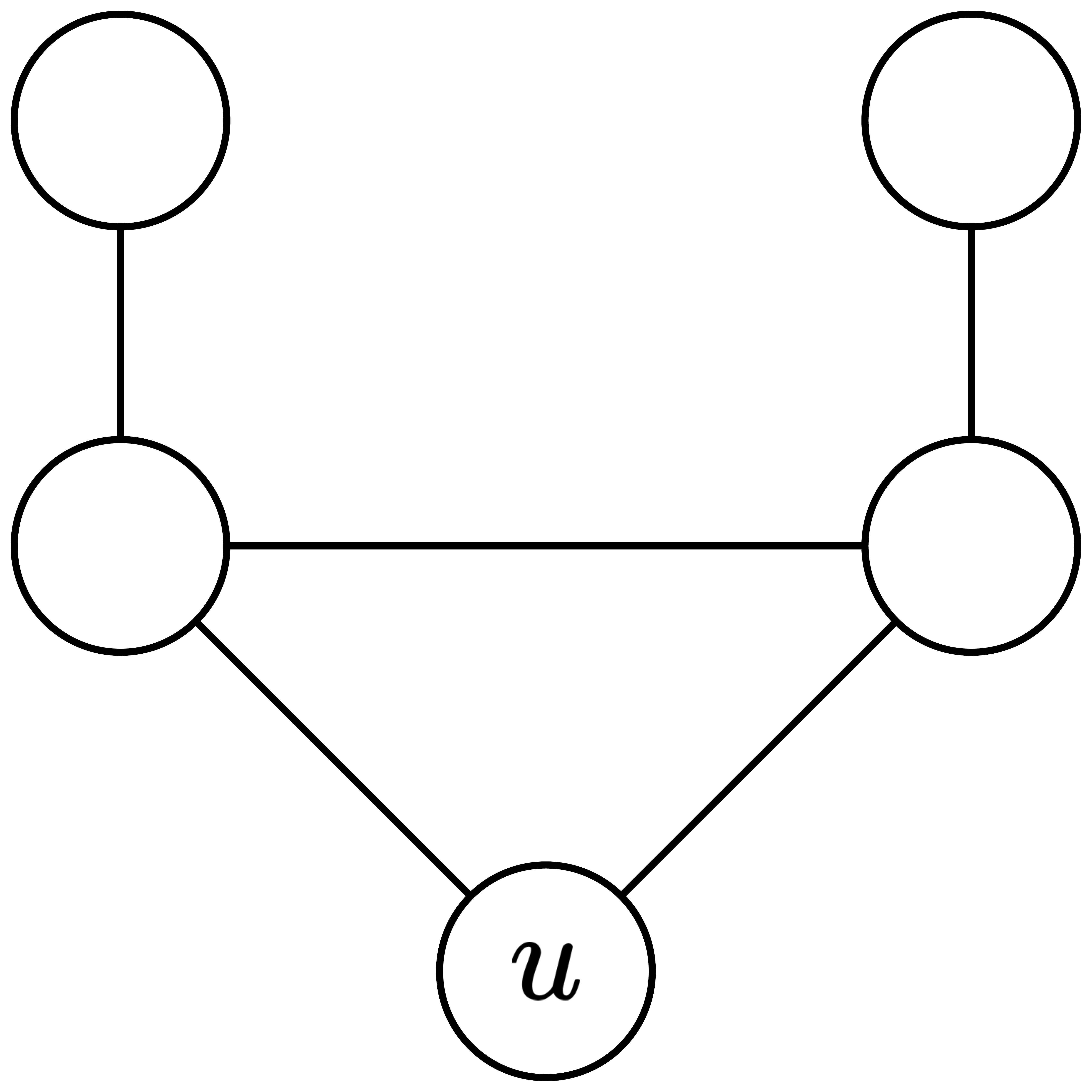}
   \caption{}
   \label{deg2}
   \end{subfigure}\hfill%
   \caption{(a) the graph $H_1$ to replace a $1$-vertex; (b) the graph $H_2$ to replace a $2$-vertex.}
\end{figure}

\begin{theorem}\label{NPH_1-Bondage_girth}
   For any fixed $k\geq 3$, $1$-\textsc{Bondage} is $\mathsf{NP}$-hard for bipartite planar graphs with degrees $2$ or $3$ and girth at least $k$.
\end{theorem}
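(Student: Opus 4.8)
The plan is to reduce from Theorem \ref{NPH_1-Bondage_planar}. Let $G'$ be the subcubic planar graph it produces from a planar instance $G$ of $\tau$-\textsc{Anticore}; recall that $b(G')\le 2$, that $b(G')=1 \iff \tau\text{-anticore}(G)\neq\emptyset$, and that the unique $\gamma$-critical edge (when it exists) is the leaf edge $cd$ of some $P_v$. From $G'$ I would build a bipartite planar graph $G''$ with all degrees in $\{2,3\}$ and girth at least $k$ by subdividing edges. Concretely, I subdivide every edge lying on a cycle of $G'$ by exactly $3t$ internal vertices, for a fixed odd integer $t$ with $3(3t+1)\ge k$, while leaving all bridges untouched (in particular the pendant paths $P_v$ and the paw-bridges $a_ib_i$). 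Since every edge of a cycle gets subdivided, a cycle of length $\ell$ becomes one of length $\ell(3t+1)$; with $t$ odd this is even, so $G''$ is bipartite, and it has length at least $3(3t+1)\ge k$, so $g(G'')\ge k$. Subdivision only inserts degree-$2$ vertices and preserves planarity and $\Delta\le 3$. The degree-$1$ vertices of $G'$ (the leaves $d$) are the only obstruction to "degrees $2$ or $3$"; I would remove them beforehand by attaching at each such leaf a fixed bipartite \emph{forcing gadget} of minimum degree $2$ that keeps $c$ in every minimum dominating set and supplies a single critical edge exactly when $\tau\text{-anticore}(G)\neq\emptyset$ — a degree-$2$ analogue of the pendant leaf, playing the role of $cd$.

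The technical core is a Subdivision Lemma about one $3$-subdivision (replacing $uv$ by $u-x-y-z-v$); subdividing by $3t$ is $t$ iterations of it. First, $\gamma$ is additive: for every graph $H$ and edge $f$, the $3$-subdivision $H''$ satisfies $\gamma(H'')=\gamma(H)+1$ (upper bound: add the middle vertex; lower bound: a short case analysis on $|S'\cap\{x,y,z\}|$, using that $u,v$ are adjacent in $H$ to repair domination after deleting the three inserted vertices). Hence $\gamma(G'')=\gamma(G')+t\cdot(\#\text{subdivided edges})$, and for any edge $e$ untouched by the subdivision, $\gamma(G''-e)=\gamma(G'-e)+(\text{the same constant})$, so $e$ is critical in $G''$ iff it is critical in $G'$ — this preserves the designated critical edge unconditionally. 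For the four new edges, the outer edges $ux,zv$ are critical in $H''$ iff $f=uv$ is critical in $H$ (since $\gamma(H''-ux)=\gamma(H-uv)+1$, the deleted outer edge leaving a pendant $P_3$ that costs exactly one extra vertex). The delicate part is the inner edges $xy,yz$: I claim $xy$ is non-critical whenever $u$ lies in some minimum dominating set of $H$ (i.e.\ $u\notin\gamma\text{-anticore}(H)$), because then $S_0\cup\{z\}$ dominates $H''-xy$ for a $\gamma$-set $S_0\ni u$, giving $\gamma(H''-xy)=\gamma(H'')$; symmetrically for $yz$ and $v$. Moreover such an $S_0\ni u$ yields $S_0\cup\{z\}\ni z$ in $H''$, so non-anticore status of the endpoints propagates along the inserted path, letting the single-step lemma iterate cleanly up to $3t$ subdivisions.

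Putting these together: when $\tau\text{-anticore}(G)=\emptyset$, $G'$ has no critical edge, so every untouched edge stays non-critical; and since I only subdivide cycle edges, whose endpoints (the $v_i,\bar v_i,v_i',a_i$ and the paw/$H_{uv}$ vertices) each lie in some minimum dominating set by the configuration flexibility established in the proof of Theorem \ref{NPH_1-Bondage_planar}, the Subdivision Lemma shows none of the inserted edges is critical either; hence $b(G'')\ge 2$. When $\tau\text{-anticore}(G)\neq\emptyset$, the forcing gadget replacing $cd$ supplies a critical edge untouched by subdivision, so $b(G'')=1$. Thus $b(G'')=1 \iff \tau\text{-anticore}(G)\neq\emptyset$, and as the construction is polynomial, $1$-\textsc{Bondage} is $\mathsf{NP}$-hard on this class.

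The main obstacle is exactly the inner-edge part of the Subdivision Lemma together with its hypothesis. That the non-anticore condition is genuinely needed can be seen on $P_6=\ell s-uvtm$: the edge $uv$ is not critical, yet both $u,v$ are $\gamma$-anticore, and $3$-subdividing $uv$ produces $P_9$, whose middle edge \emph{is} critical — a spurious critical edge that would wreck the reduction. So the heart of the write-up is (i) proving the inner-edge claim under the non-anticore hypothesis and its propagation along the path, and (ii) checking, from the configurations of Theorem \ref{NPH_1-Bondage_planar}, that every subdivided edge has both endpoints outside $\gamma\text{-anticore}(G')$ while the designated critical edge is a bridge left untouched. Designing the bipartite minimum-degree-$2$ forcing gadget that reproduces the leaf's behaviour (and itself has girth at least $k$) is the remaining, routine but fiddly, ingredient.
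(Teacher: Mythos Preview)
Your Subdivision Lemma is essentially the paper's Claim (labelled \texttt{girth1}), and you correctly identify both the need for the non-anticore hypothesis on the endpoints and its propagation along the inserted path. The gap is in applying it. You assert that ``the $v_i,\bar v_i,v_i',a_i$ and the paw/$H_{uv}$ vertices each lie in some minimum dominating set by the configuration flexibility established in the proof of Theorem~\ref{NPH_1-Bondage_planar}'', but that proof only shows no edge is $\gamma$-critical, which is strictly weaker than every cycle vertex being non-anticore. In fact the claim fails: take $G=C_4$, so $\tau$-anticore$(G)=\emptyset$, and consider $a_1$ in some $G_v$ (here $l=2$). A short count using the lower bound~(\ref{gamma_lbound}) shows that any $\gamma$-set of $G'$ has total excess exactly $\tau(G)=2$, while forcing $a_1\in S'_v$ already costs one unit of excess at $G_v$ and leaves $G_v$ covering at most one of the two incident $H$-components; the remaining $H$-components around the $4$-cycle then force at least two further units of excess. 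Hence $a_1\in\gamma$-anticore$(G')$, and your lemma does not control the inner edges arising from subdividing $a_1v_1$ or $a_1v_1'$. (A similar issue affects $v_l'$.) The unspecified ``bipartite forcing gadget'' replacing the leaf $d$ is a second, independent hole.

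The paper avoids both problems by taking a different route. First it passes through Theorem~\ref{NPH_1-Bondage_planar_cubic} to a $3$-regular planar graph, so there are no leaves to repair. Second, and this is the key idea you are missing, it does \emph{not} subdivide edges of the host graph directly. Instead it introduces an operation $O_e$ that replaces each edge $uv$ by a fixed planar gadget $H_e$ with $\gamma(H_e)=8$, $b(H_e)\ge 2$, and $\gamma$-anticore$(H_e)=\{x,y\}$; it proves $O_e$ preserves the bondage number and shifts $\gamma$ by $8$. All $3$-subdivisions are then performed only on edges of $H_e\setminus\{x,y\}$, where the non-anticore hypothesis is verifiable by exhibiting six explicit $\gamma$-sets $A,\ldots,F$ of $H_e$. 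In other words, $H_e$ manufactures a controlled region in which your Subdivision Lemma applies cleanly, rather than relying on an exhaustive (and, as above, false) anticore analysis of the whole of $G'$.
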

\begin{proof}
   We give a polynomial reduction from $1$-\textsc{Bondage} which has been shown to be $\mathsf{NP}$-hard for planar cubic graphs with bondage number at most $2$ in Theorem \ref{NPH_1-Bondage_planar_cubic}. From a cubic planar graph $G=(V,E)$ such that $b(G)\leq 2$, and an integer $k$, we construct a planar bipartite graph $G'=(V',E')$ with $2\leq \delta(G')\leq \Delta(G')\leq 3$ and $g(G')\geq k$. \\

   The $3$-subdivision of an edge $uv$ is obtained by subdividing $uv$ into four edges and inserting three new vertices, say $u',w,v'$. Therefore $uv$ is replaced by a path of length five, that is, $P_5=u-u'-w-v'-v$. The $3$-subdivision has been used in \cite{Zver} (see Corollary 3) to prove the $\mathsf{NP}$-completeness of \textsc{Dominating Set} for bipartite graphs of girth at least $k$. It is proved that $\gamma(H)=\gamma(G)+1$, where $H$ is the graph obtained from the $3$-subdivision of an edge of $G$. Unfortunately, this operation does not always maintain the bondage number. Thereafter, we will introduced the operation $O_e$ to isolate small induced cycles for which edges can be $3$-subdivided without altering the bondage number.

   We claim the following:

   \begin{claim}\label{girth1}
      Let $H$ be the graph obtained from the $3$-subdivision of an edge $uv$ of $G$ such that $u,v\not\in \gamma$-anticore$(G)$. Then $\gamma(H)=\gamma(G)+1$ and $b(H)=1$ if and only if $b(G)=1$. As a consequence $u',w,v'\not\in \gamma$-anticore$(H)$.
   \end{claim}

   Since $\gamma(H)=\gamma(G)+1$ has already been proved in Theorem \cite{Zver}, we focus on the following to prove Claim \ref{girth1}:

   \begin{claim}\label{girth2}
      $b(H)=1$ if and only if $b(G)=1$.
   \end{claim}

   Suppose that $b(G)=1$. Let $st$ be a $\gamma$-critical edge of $G$. If $st\neq uv$, then $\gamma(H-st)=\gamma(G-st)+1$. Hence $st$ is $\gamma$-critical in $H$ and $b(H)=b(G)=1$. Else $st=uv$. Suppose that $uu'$ is not $\gamma$-critical in $H$. Let $S'$ be a $\gamma$-set of $H-uv$. It follows that $(N_G[u]\setminus \{v\})\cap S' \neq \emptyset$, and we can assume that $w\in S'$ since $u'$ is a leaf in $H-uv$. Since $v'$ is dominated by $w$ in $S'$, and that $d_H(v')=2$, we can assume that $(N_G[v]\setminus \{u\})\cap S'\neq \emptyset$. Therefore $S'\setminus \{w\}$ is a $\gamma$-set of $G$, a contradiction. The case $vv'$ is symmetric. So $uu'$ and $vv'$ are $\gamma$-critical in $H$ and $b(H)=b(G)=1$. \\

   Now suppose that $b(G)\geq 2$. Suppose that $b(H)=1$. Let $st$ be a $\gamma$-critical edge in $H$. If $st$ is an edge of $G$, then $\gamma(H-st)=\gamma(G-st)+1$, a contradiction. Hence $st\in \{uu',vv',u'w,v'w\}$. From a $\gamma$-set $S$ of $G-uv$, we have $S'=S\cup \{w\}$ a $\gamma$-set of $H$. Then from Remark \ref{noedgecritical} and \ref{novertexcritical}, $uu'$ and $vv'$ are not $\gamma$-critical in $H$. Moreover, there exists $S_1$ and $S_2$, two $\gamma$-sets of $G$ such that $u\in S_1$ and $v\in S_2$. Note that $S_1=S_2$ is possible. Then $S_1'=S\cup \{v'\}$ and $S_2'=S\cup \{u'\}$ are two $\gamma$-sets of $H$. Yet from Remark \ref{noedgecritical}, $u'w$ and $v'w$ are not $\gamma$-critical in $H$. So $b(H)\geq 2$. Note that the three new vertices $u',w,v'$, arising from the $3$-subdivision of $uv$, are not $\gamma$-anticores in the constructed graph $H$ since both $u$ and $v$ are not $\gamma$-anticores in $G$. This proves Claim \ref{girth2}. This proves Claim \ref{girth2} and therefore Claim \ref{girth1} is proved. \\


   Before introducing the operation $O_e$, we underline some properties of the graph used in this operation. Let $H_e$ be the graph represented in Figure \ref{he}. We leave to the reader the task of verifying that $\gamma(H_e)=8$ and $\gamma(H_e\setminus U)=8$, where $U\subseteq \{x,y\}$. We highlight six minimum dominating sets of $H_e$, that are:

   \begin{itemize}
      \item $A=\{a_1,a_3,b_1,b_4,d_2,e_2,f_2,g_2\}$;
      \item $B=\{a_1,a_3,b_1,b_4,d_3,e_3,f_3,g_3\}$;
      \item $C=\{a_1,a_3,b_2,c_3,d_3,e_1,f_1,g_2\}$;
      \item $D=\{a_2,b_1,b_4$, $c_1,d_4,e_3,f_2,g_4\}$;
      \item $E=\{a_1,a_3,b_3,c_4,d_3$, $e_2,f_4,g_1\}$;
      \item $F=\{a_1,b_2,b_4,c_2,d_1,e_4,f_3,g_4\}$.
   \end{itemize}

   From the $\gamma$-sets $A,B,C,D,E$ and Remark \ref{noedgecritical} and \ref{novertexcritical}, the graph $H_e$ admits no edge that is $\gamma$-critical and therefore $b(H_e)\geq 2$. Note that every vertex of $H_e\setminus \{x,y\}$ belongs to at least one $\gamma$-set. Then one can check that $\gamma$-anticore$(H_e)=\{x,y\}$. \\

   We describe the operation $O_e$. Select an edge $e=uv$ of $G$. Replace the edge $uv$ by the gadget $H_e$ such that $ux$ and $vy$ are two edges.
   We claim the following:

   \begin{claim}\label{girth3}
      Let $H$ be a graph obtained from the operation $O_e$ of an edge $e$ of $G$. Then $\gamma(H) = \gamma(G) + 8$ and $b(H)=1$ if and only if $b(G)=1$.
   \end{claim}

   First we focus on the following:

   \begin{claim}\label{girth4}
      $\gamma(H) = \gamma(G) + 8$.
   \end{claim}

   Let $S$ be a $\gamma$-set of $G$. From $S$ we construct $S'$ a dominating set of $H$. Let $S'=S$. If $u,v\not\in S$ or $u,v\in S$, then we set $S'\leftarrow S'\cup A$. Since $A$ is a $\gamma$-set of $H_e$, $S'$ dominates $H$. If $u\in S$ and $v\not\in S$, then we set $S'\leftarrow S'\cup \{a_1,b_2,c_3,d_2,e_1,f_1,g_2,y\}$. Note that $u$ dominates $x$ and $y$ dominates $v$. Therefore $S'$ is a dominating set of $H$. The case $u\not\in S$ and $v\in S$ is symmetric. Hence $\gamma(H)\leq \gamma(G)+8$.

   Now let $S'$ be a $\gamma$-set of $H$. From $S'$ we construct $S$ a $\gamma$-set of $G$. Let $S=S'\cap V(G)$. Since $\gamma(H_e\setminus U)=8$, where $U\subseteq \{x,y\}$, then $\vert S'\cap V(H_e)\vert \geq 8$. Hence $\vert S\vert \leq \gamma(G)$. If $\vert S\vert \leq \gamma(G)-2$, then $S\cup \{u\}$ is a $\gamma$-set of $G$, a contradiction. Suppose that $\vert S\vert = \gamma(G)-1$. If $S$ dominates $G$, then $\vert S\vert < \gamma(G)$, a contradiction. If $N_G[u]\cap S\neq \emptyset$ and $N_G[v]\cap S\neq \emptyset$, then $S$ is a dominating $G$ such that $\vert S\vert < \gamma(G)$, a contradiction. So $u,v\not\in S$. Hence $S$ does not dominate $H_e\cup U$ in $H$, where $U\subseteq \{u,v\}$. Since  $x,y\in \gamma$-anticore$(H_e)$ and $\gamma(H_e)=8$, we have $\vert S'\cap H_e\vert \geq 9$. So $\gamma(H)=\gamma(G)+8$. This proves Claim \ref{girth4}. \\

   To prove Claim \ref{girth3}, it remains to show the following:

   \begin{claim}\label{girth5}
      $b(H)=1$ if and only if $b(G)=1$.
   \end{claim}

   First suppose that $b(G)=1$. We may assume that $b(H)\geq 2$. Let $st$, $st\neq uv$, be a $\gamma$-critical edge of $G$. From Claim \ref{girth4} $\gamma(H-st)=\gamma(G-st)+8$, and therefore $b(H)=1$, a contradiction. Hence $uv$ is the unique $\gamma$-critical edge of $G$. Suppose that $ux$ is not $\gamma$-critical in $H$. From Claim \ref{girth4} $\gamma(H-ux)=\gamma(G)+8$. Let $S'$ be a $\gamma$-set of $H-ux$, and $S=S'\cap V(G)$. Note that $S\cap N_G[u]\neq \emptyset$. Since $\gamma(H_e\setminus U)=8$, where $U\subseteq \{x,y\}$, then $\vert S'\cap V(H_e)\vert \geq 8$. Hence $\vert S\vert \leq \gamma(G)$. If $S$ dominates $G-uv$, then $\vert S\vert < \gamma(G-uv)$, a contradiction. So $S\cap N_G[v]=\emptyset$, and therefore $y \in S'$. Yet from our previous arguments $\gamma(H_e)=8$ and $y\in \gamma$-anticore$(H_e)$. Therefore $\vert S' \cap V(H_e)\vert \geq 9$. Thus $\vert S\vert \leq \gamma(G)-1$. Then $S$ is dominating $G-u$ and $S\cup \{v\}$ is dominating $G-uv$. But $\vert S\cup \{v\}\vert < \gamma(G-uv)$, is a contradiction. So if $b(G)=1$, then $b(H)=1$.

   Now suppose that $b(G)\geq 2$. We may assume that $b(H)=1$. Let $st$ be a $\gamma$-critical edge in $H$. Let $S$ be a $\gamma$-set of $G-uv$. Note that $S'=S\cup A$ is a $\gamma$-set of $H-ux-vy$. Since $H_e$ is an isolated component in $H-ux-vy$ and that $b(H_e)\geq 2$, then $st$ is not an edge with an endpoint in $H_e$. Thus $st\in E(G)$ and then $\gamma(H-st)=\gamma(G-st) + 8$, a contradiction. So $b(H) \geq 2$. This proves Claim \ref{girth5}. Then Claim \ref{girth3} follows from Claim \ref{girth4} and \ref{girth5}.


   \begin{claim}\label{girth6}
      Let $H$ be the graph obtained from the $O_e$ operation of an edge $e=uv$ of $G$. If $uv$ if not $\gamma$-critical in $G$, then no vertices in $H_e\setminus \{x,y\}$ is in $\gamma$-anticore$(H)$.
   \end{claim}

   Let $S$ be a $\gamma$-set of $G-uv$. Then $S\cup X$, where $X\in \{A,B,C,D,E,F\}$, is a $\gamma$-set of $H$. So the vertices of $H_e\setminus \{x,y\}$ are not in $\gamma$-anticore$(H)$. this proves Claim \ref{girth6}. \\

   Last, we want to apply the $3$-subdivision on the subgraph $H_e$ so that it becomes bipartite, with girth at least $k$, and such that every induced path between $x$ and $y$ is odd. Let $H$ be the graph obtained from the operation $O_e$ of an edge $uv$ of the graph $G$. From Claim \ref{girth3}, we have $b(H)=1$ if and only if $b(G)=1$. From Claim \ref{girth1} and \ref{girth6}, we can $3$-subdivide any edge of $H_e\setminus \{x,y\}$ such that the resulting graph has bondage number $1$ if and only if $H$ has bondage number $1$.  Hence we can $3$-subdivide the bold and dotted edges of $H_e\setminus \{x,y\}$, as represented in Figure \ref{he_bip}. Each bold edge will be $3$-subdivided iteratively so that the path between its endpoints is even, while each dotted edge will be $3$-subdivided iteratively so that the path between its endpoints is odd. Multiples operations of $3$-subdivision will be applied until the modified graph $H_e$ has girth at least $k$. Then it remains to show that the arising graph $H_e'$ is bipartite and that there is no even path between $x$ and $y$. See this $2$-coloring of $H_e$ in Figure \ref{he_bip}. Then one can check the following:

   \begin{itemize}
      \item the vertices connected by an edge or an even path (bold edge) have distinct colors;
      \item the vertices connected by an odd path (dotted edge) have the same color;
      \item $x$ and $y$ have the same color and the colored paths between $x$ and $y$ alternate.
   \end{itemize}

   It follows that $H_e'$ is a planar, subcubic, bipartite, and has girth at least $k$. \\

   Applying iteratively the operation $H_e$ and the $3$-subdivision as done above, we obtain a graph $H$ that is planar, subcubic, bipartite, with girth at least $k$, and such that $b(H)=1$ if and only if $b(G)=1$. This completes the proof.
\end{proof}

\begin{figure}[H]
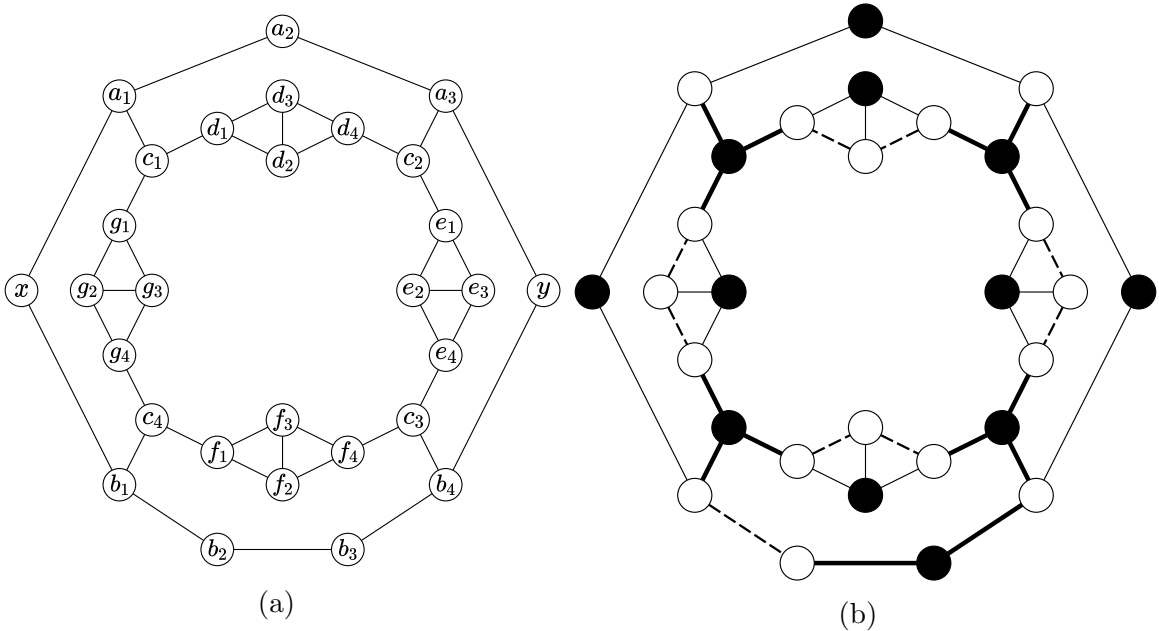

   \centering
   \begin{subfigure}{.48\textwidth}
   \centering
   \includegraphics[width=15cm, height=7.4cm, keepaspectratio=true]{He}
   \caption{}
   \label{he}
   \end{subfigure}\hfill%
   \begin{subfigure}{.50\textwidth}
   \centering
   \includegraphics[width=15cm, height=7.7cm, keepaspectratio=true]{he_bip}
   \caption{}
   \label{he_bip}
   \end{subfigure}\hfill%
   \caption{(a) The graph $H_e$ that corresponds to an edge; (b) $3$-subdivisions of the dotted and bold edges of the graph $H_e$, so that the resulting graph is bipartite with girth at least $k$.}
   \label{gadgets_He}
\end{figure}


In \cite{FischRau} (Theorem 3.5), it is proved that for any planar graph $G$ of girth at least $8$, we have $b(G)\leq 3$. Hence the answer to $3$-\textsc{Bondage} is always positive. Yet for these graphs, we may ask if we could find in polynomial time, a set of three edges $E'$ such that $\gamma(G-E')>\gamma(G)$. It has been shown in \cite{Zver} that \textsc{Dominating Set} is $\mathsf{NP}$-complete (see Corollary 3) for bipartite graphs of girth at least $k$, for any fixed $k\geq 3$. Hence we cannot find such a set of edges with brute force, that is, testing for all sets $E'$ of three edges if $\gamma(G-E')>\gamma(G)$. Fortunately we can reuse the arguments of \cite{FischRau} to find efficiently a set of three edges that is $\gamma$-critical.

\begin{proposition}
   Let $G=(V,E)$ be a planar graph of girth at least $8$. Then we can find $E'\subseteq E$, where $\vert E'\vert = 3$, such that $\gamma(G-E')>\gamma(G)$ in polynomial time.
\end{proposition}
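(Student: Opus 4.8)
The plan is to unpack the proof of the bound $b(G)\le 3$ from Fischermann et al.\ \cite{FischRau} and check that every step is constructive, i.e.\ that the three edges it guarantees can actually be located by a polynomial-time procedure. The statement is purely an algorithmic strengthening of the already-known existence result (Theorem~\ref{FischRau}): existence says $b(G)\le 3$, and I want to *exhibit* a witnessing triple $E'$ efficiently. So the whole task reduces to auditing the discharging/structural argument behind the bound and replacing each "there exists a suitable vertex/edge" by "we can find such a vertex/edge by scanning $G$."

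First I would recall the shape of the girth-$8$ argument. For planar graphs of large girth one has a sparse structure (by Euler's formula, girth at least $8$ forces many vertices of small degree, in particular the existence of vertices of degree at most $1$ or configurations of low-degree vertices clustered together). The Fischermann–Rautenbach proof locates a specific local configuration---typically a vertex $v$ together with a short neighborhood---and argues that deleting at most three edges incident to (or near) that configuration strictly increases $\gamma$. The key domination fact they use is the standard one: if one can find a vertex $w$ such that after removing three edges $w$ becomes "forced" to need its own dominator that no minimum dominating set can supply, then $\gamma$ must go up. Concretely I would identify, via the sparsity, a low-degree vertex $v$ (say a vertex whose closed neighborhood has a simple structure because of girth $\ge 8$), and the three edges $E'$ are those that isolate the relevant private-domination requirement.

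The main steps, in order, would be: (1) compute $\gamma(G)$ is \emph{not} needed---this is the point, since \textsc{Dominating Set} is $\mathsf{NP}$-hard here---so instead I must phrase the increase criterion combinatorially, exactly as \cite{FischRau} do, without ever evaluating $\gamma$. (2) Scan $G$ to locate the structural configuration that their existence proof produces; because the configuration is defined by local degree and distance conditions (bounded-radius neighborhoods), this scan is polynomial: for each vertex test the $O(1)$-size neighborhood pattern. (3) Once the configuration is found, output the associated three edges $E'$, and invoke their lemma verbatim to conclude $\gamma(G-E')>\gamma(G)$. The correctness is inherited from Theorem~\ref{FischRau}; only the \emph{findability} is new, and that follows because their proof is existential over a finite catalogue of local configurations, at least one of which must occur by the girth-$8$ sparsity count.

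\textbf{The main obstacle} I anticipate is the non-evaluability of $\gamma$: the natural way to certify "$\gamma$ increased" is to compare $\gamma(G)$ with $\gamma(G-E')$, but both are $\mathsf{NP}$-hard to compute, so the algorithm can never actually check its output by brute force (the paper explicitly flags this). Hence the real work is to show that the selected $E'$ is \emph{guaranteed} to be $\gamma$-increasing by a purely local structural certificate, so that no global optimization is required. I would therefore take care that the configuration located in step (2) is one for which Fischermann and Rautenbach's argument gives an unconditional increase---i.e.\ the edges $E'$ create a vertex that is forced to be undominatable within a budget of $\gamma(G)$ dominators---rather than one that merely increases $\gamma$ for \emph{some} graphs. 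Establishing that such an "always-works" configuration is among those forced by girth $\ge 8$, and that it is detectable locally, is the crux; the remaining bookkeeping (planarity preserved, polynomial running time of the neighborhood scan) is routine.
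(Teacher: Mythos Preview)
Your plan is correct and follows exactly the paper's approach: the paper simply recalls from \cite{FischRau} that every planar graph of girth at least $8$ contains two vertices $u,v$ with either (1) $uv\in E$ and $d(u)+d(v)\le 4$, or (2) $d(u,v)\le 2$ and $d(u)+d(v)\le 3$, notes that such a pair can be found by a polynomial scan, and then takes $E'$ to be the (at most three) edges that isolate $u$ and $v$---the increase $\gamma(G-E')>\gamma(G)$ then follows from the purely local argument you anticipated, with no need to evaluate $\gamma$. The only thing missing from your proposal is this explicit identification of the two concrete configurations, which is what turns your audit-the-discharging outline into an actual proof.
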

\begin{proof}
   The proof in \cite{FischRau} (Theorem 3.5) to show that $b(G)\leq 3$ revolves around the existence of two vertices $u,v$ such that either:


   \begin{enumerate}
      \setlength\itemsep{.01em}
      \item[(1)] $uv\in E$ and $d(u) + d(v) \leq 4$; or
      \item[(2)] $d(u,v)\leq 2$ and $d(u) + d(v) \leq 3$.
   \end{enumerate}

   It is shown that such vertices always exist. Therefore for both cases, we expose a method to find a $\gamma$-critical set of edges of size at most three. If there is $u,v\in V$ as described in (1), then we take $E'$ as the set of edges incident with $u,v$. Remark that $\vert E'\vert \leq 3$. Let $S$ be a $\gamma$-set of $G-E'$. Since both vertices are isolated in $G-E'$ it follows that $u,v\in S$. Then $S\setminus \{u\}$ is a dominating set of $G$ and therefore $\gamma(G-E')>\gamma(G)$.

   Now suppose that there are $u,v\in V$ as described in (2). W.l.o.g.\ $d(v)=1$, $d(u)\leq 2$. Let $w\in N(u)\cap N(v)$. Then, we take $E'$ as the set of edges incident to $u$ and the edge $vw$. Remark that $\vert E'\vert \leq 3$ and that $u$ and $v$ are isolated $G-E'$. It follows that there is a $\gamma$-set $S$ of $G-E'$ such that $u,v\in S$. Therefore $(S\setminus \{u,v\})\cup \{w\}$ is a dominating set of $G$ and so $\gamma(G-E')>\gamma(G)$.

   Since we can test if there are two vertices that satisfy either (1) or (2) in polynomial time, this completes the proof.
\end{proof}

We carry on with a general approach for the classes of graphs for which $d$-\textsc{Bondage} can be solved in polynomial time. But first, consider the following operation: given $G=(V,E)$ and a set of vertices $A\subseteq V$, we construct $G_A+=(V',E')$ from a copy of $G$ as follows: for each $v\in A$ we add exactly one vertex $u$, $u\not\in V$, such that $uv$ is an edge. We show the following:

\begin{lemma}\label{dom_subset}
   Let $G=(V,E)$ be a graph and let $A\subseteq V$. There is a minimum dominating set $S$ of $G$, such that $A\subseteq S$, if and only $\gamma(G_A+)=\gamma(G)$.
\end{lemma}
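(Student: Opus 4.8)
The plan is to reduce everything to one structural observation: for each $v\in A$ the added vertex $u_v$ is a leaf whose unique neighbour is $v$, so within $V$ the vertex $u_v$ dominates only $v$. Consequently, in any dominating set of $G_A+$ a leaf $u_v$ may always be swapped for its attachment vertex $v$ without losing any domination and without increasing cardinality. I would organise the argument around the universal inequality $\gamma(G)\le\gamma(G_A+)$, which holds for every $A$, and then prove the two implications separately.

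First I would establish $\gamma(G)\le\gamma(G_A+)$. Take a $\gamma$-set $S'$ of $G_A+$ and form $S$ by replacing every leaf $u_v\in S'$ with its attachment vertex $v$ and deleting the leaves that remain; this yields $S\subseteq V$ with $|S|\le|S'|$. Since the edges among $V$ are identical in $G$ and in $G_A+$, and since each replaced leaf $u_v$ only dominated $v$ (and itself), the set $S$ still dominates every vertex of $V$ in $G$, giving $\gamma(G)\le|S|\le\gamma(G_A+)$. For the forward direction, suppose $G$ has a $\gamma$-set $S$ with $A\subseteq S$. Then every leaf $u_v$ with $v\in A$ is dominated because its neighbour $v$ lies in $A\subseteq S$, so $S$ is already a dominating set of $G_A+$; hence $\gamma(G_A+)\le|S|=\gamma(G)$, and together with the universal inequality we conclude $\gamma(G_A+)=\gamma(G)$.

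The backward direction is where I expect the only genuine care to be needed, although it remains short. Assuming $\gamma(G_A+)=\gamma(G)$, I would take a $\gamma$-set $S'$ of $G_A+$, so $|S'|=\gamma(G)$. For each $v\in A$ the leaf $u_v$ must be dominated, hence $v\in S'$ or $u_v\in S'$. Define $S=(S'\cap V)\cup\{v\in A:u_v\in S'\}$; then $A\subseteq S$ by construction, and $|S|\le|S'|=\gamma(G)$ because the injective correspondence $u_v\mapsto v$ shows the added vertices cost no more than the leaves they replace. To verify that $S$ dominates $G$, note again that the only vertex of $V$ a leaf $u_v$ can dominate is $v$ itself, so any $w\in V$ dominated in $G_A+$ exclusively by some leaf $u_v$ satisfies $w=v\in S$, while all other vertices of $V$ are dominated by $S'\cap V\subseteq S$. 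Thus $S$ is a dominating set of $G$ with $|S|\le\gamma(G)$, forcing $|S|=\gamma(G)$, so $S$ is the desired $\gamma$-set of $G$ containing $A$.

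The hard part is therefore not any single deep step but the bookkeeping that the leaf-to-attachment swap never increases cardinality while simultaneously forcing every $v\in A$ into the recovered set; once the leaf structure is isolated, both directions follow from the universal inequality and this swap. I would double-check the degenerate case $A=\emptyset$ (where $G_A+=G$ and both sides of the equivalence hold trivially) to make sure the definitions are consistent.
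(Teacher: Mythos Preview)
Your proof is correct and follows essentially the same approach as the paper: both hinge on the observation that in any dominating set of $G_A+$ each added leaf $u_v$ can be replaced by its attachment $v$ without loss, which forces $A$ into the resulting set. The only cosmetic difference is that the paper handles the backward direction by contrapositive (assuming no $\gamma$-set of $G$ contains $A$ and deducing $\gamma(G_A+)>\gamma(G)$), whereas you argue directly and make the universal inequality $\gamma(G)\le\gamma(G_A+)$ explicit; the underlying idea is identical.
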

\begin{proof}
   If there is a minimum dominating set $S$ of $G$ such that $A\subseteq S$, then $S$ is dominating set of $G_A+$. Now we can assume that $A\not\subseteq S$ for every minimum dominating set $S$ of $G$. Let $D$ be a minimum dominating set of $G_A+$. We may assume that $A\subseteq D$, since each vertex of $A$ has a leaf as neighbor in $G_A+$. Therefore $D$ is a dominating set of $G$ but $D$ is not a $\gamma$-set of $G$, and so $\gamma(G_A+) > \gamma(G)$.
\end{proof}

We are ready to show the following result. Note that it is inspired by the work in \cite{Ries} for reducing the domination number via edge contractions.

\begin{proposition}
   For any fixed $d\geq 1$, $d$-\textsc{Bondage} can be solved in polynomial time for $\mathcal{C}$, if either
   \begin{enumerate}
      \setlength\itemsep{.01em}
      \item[(1)] $\mathcal{C}$ is closed under edge deletions and \textsc{Dominating Set} can be solved in polynomial time for $\mathcal{C}$; or
      \item[(2)] $\mathcal{C}$ is the class of $H$-free graphs, where $H$ is a fixed graph with $\delta(H)\geq 2$, and \textsc{Dominating Set} can be solved in polynomial time for $\mathcal{C}$; or
      \item[(3)] for every $G\in \mathcal{C}$, $\gamma(G)\leq k$, where $k$ is a fixed constant.
   \end{enumerate}
\end{proposition}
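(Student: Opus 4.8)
The plan is to reduce all three cases to the computation of domination numbers and to dispatch them through one common outer loop. First, note that $b(G)\le d$ if and only if there is a set $A\subseteq E$ with $|A|\le d$ and $\gamma(G-A)>\gamma(G)$: any such $A$ can be trimmed edge by edge down to one that raises $\gamma$ by exactly one, because deleting a single edge increases $\gamma$ by at most one. Since there are only $O(|E|^{d})$ candidate sets $A$ for fixed $d$, it suffices to compute $\gamma(G)$ and $\gamma(G-A)$ in polynomial time for each candidate, and I will also use the same fact in the form $\gamma(G-A)\le\gamma(G)+d$.

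Cases (3) and (1) then follow immediately. Under (3) we have $\gamma(G-A)\le k+d$, a fixed constant, so $\gamma(G)$ and every $\gamma(G-A)$ are found by brute force over all vertex subsets of size at most $k+d$, in time $O(|V|^{k+d})$. Under (1) the class is closed under edge deletions, so $G-A\in\mathcal C$ for every $A$, and both $\gamma(G)$ and $\gamma(G-A)$ are obtained from the polynomial \textsc{Dominating Set} algorithm (recovering the exact value by a search over the threshold).

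Case (2) is where the work lies, since the class of $H$-free graphs with $\delta(H)\ge 2$ need not be closed under edge deletions and $G-A$ may contain an induced $H$, so it cannot be fed to the oracle. The property I would lean on is that $\delta(H)\ge 2$ forbids leaves, and more generally degree-two vertices whose second neighbour is a leaf, from lying in any induced copy of $H$; hence, starting from an $H$-free graph, vertex deletions, pendant vertices, and pendant paths $u-p_u-q_u$ all stay within $\mathcal C$. I would begin by deleting the endpoint set $V_A$ (of size at most $2d$): every induced $H$ produced by removing $A$ must use a deleted edge, hence at least two vertices of $V_A$, so $G-V_A$ is $H$-free. What remains is to reinsert the bounded interaction between a dominating set and $V_A$ without leaving $\mathcal C$.

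To do so I would guess the trace of an optimal dominating set $S$ of $G-A$ on $V_A$: the set $W=S\cap V_A$, together with, for each remaining endpoint $v\in V_A\setminus W$, a witness $n(v)\in N_{G-A}(v)\cap S$ dominating it; there are at most $2^{2d}|V|^{2d}$ such guesses, which is polynomial. For a feasible guess, the best dominating set with that trace is found by minimising $S\cap(V\setminus V_A)$ subject to dominating every vertex of $V\setminus V_A$ not already covered by $W$ or by the external witnesses, while forcing the external witnesses into the solution. This is exactly the task of dominating a prescribed target set $T$ in the $H$-free graph $G-V_A$, which I would convert into an ordinary \textsc{Dominating Set} instance in $\mathcal C$: attach a pendant path $u-p_u-q_u$ to each non-target vertex (this releases $u$ from needing domination at a fixed cost of one per such vertex, since $p_u$ may always be taken) and a pendant to each forced witness (forcing it in, cf.\ Lemma~\ref{dom_subset}). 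All these modifications preserve $H$-freeness, so a single oracle call per guess suffices and $\gamma(G-A)$ is the minimum over feasible guesses after subtracting the known gadget cost. The step I expect to be the crux is precisely this reduction: verifying that the guesses are exhaustive — that every optimal dominating set of $G-A$ is recovered through its own trace on $V_A$ — and that the pendant-path gadget implements target domination exactly while creating no induced $H$, which is the point where $\delta(H)\ge 2$ is indispensable.
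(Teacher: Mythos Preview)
Your argument is correct. Cases~(1) and~(3) match the paper in spirit; for~(3) you brute-force $\gamma(G-A)$ directly using $\gamma(G-A)\le k+d$, whereas the paper reuses its characterisation from case~(2), but both are fine.

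For case~(2) you take a genuinely different route. The paper never computes $\gamma(G-E')$. Instead it observes that $\gamma(G-E')>\gamma(G)$ holds iff no $\gamma$-set of $G$ already dominates the endpoint set $W$ inside $G-E'$; equivalently, iff for every subset $A\subseteq N[W]$ with $|A|\le 2d$ that dominates $W$ in $G-E'$, no $\gamma$-set of $G$ contains $A$. The latter is tested by a single pendant-leaf gadget: $A$ is contained in some $\gamma$-set of $G$ iff $\gamma(G_A{+})=\gamma(G)$ (Lemma~\ref{dom_subset}), and $G_A{+}$ stays $H$-free because $\delta(H)\ge 2$. So the paper runs the oracle only on $G$ and on graphs $G_A{+}$, never leaving $\mathcal C$ and never touching $G-E'$.

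You instead compute $\gamma(G-E')$ outright: delete the endpoint set $V_A$ (an induced subgraph, hence automatically $H$-free---your stated reason about ``using a deleted edge'' is beside the point here), guess $S\cap V_A$ and a dominator for each unpicked endpoint, and reduce the residual problem to ordinary domination in an $H$-free graph via two gadgets (pendant leaves to force the witnesses, pendant $P_3$'s to release the already-covered vertices). This is heavier---an extra layer of guessing and an extra gadget---but it yields more: you recover the actual value of $\gamma(G-E')$, not just the one-bit comparison with $\gamma(G)$. Both proofs hinge on the same fact, that attaching degree-one vertices (and degree-two vertices whose other neighbour is a leaf) to an $H$-free graph keeps it $H$-free when $\delta(H)\ge 2$; the paper exploits it once, you exploit it twice.
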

\begin{proof}
   Let $G=(V,E)\in \mathcal{C}$. We prove (1). When $\mathcal{C}$ is closed under edge deletions, then we can compute in polynomial time the domination number of $G-E'$, for any set of edges $E'\subseteq E$, such that $\vert E'\vert \leq d$. Hence $d$-\textsc{Bondage} can be solved in polynomial time for $\mathcal{C}$. \smallskip

   We prove (2). Let $E'\subseteq E$, where $\vert E'\vert \leq d$, and let $G'=G-E'$. We define as $W$ the set of vertices covered by $E'$, that is, $W=\{v\in e \mid e\in E'\}$. Note that $\vert W\vert \leq 2d$. Let $\mathcal{A}$ be the collection of all $A\subseteq N[W]$, $\vert A\vert \leq 2d$, that dominates the vertices of $W$ in $G'$. One can see that $W\in \mathcal{A}$, and that $\vert \mathcal{A} \vert \leq \Delta(G)^{2d} \leq n^{2d}$.

   We show that $\gamma(G-E')>\gamma(G)$ if and only if for every minimum dominating set $S$ of $G$, and for every $A\in \mathcal{A}$, we have $A\not\subseteq S$.
   First, suppose that $\gamma(G-E')>\gamma(G)$. For contradiction, we assume that there is a minimum dominating set $S$ of $G$, and $A\in \mathcal{A}$, such that $A\subseteq S$. Therefore $S$ dominates $W$ in $G'$. Since $S$ dominates $V\setminus W$ in $G'$, it follows that $S$ is a dominating set of $G'$, a contradiction.
   Second, suppose that for every minimum dominating set $S$ of $G$, and for every $A\in \mathcal{A}$, we have $A\not\subseteq S$. For contradiction, we assume that $\gamma(G-E')=\gamma(G)$. Let $S$ be a minimum dominating set of $G$. Let $D$ be a minimal subset of $S\cap N[W]$ such that $D$ dominates $W$ in $G'$. Since $D$ is minimal, for every $u\in D$, there is $v\in N_{G'}(u)\cap W$ such that $N_{G'}(v)\cap D=\{u\}$. Therefore $\vert D\vert \leq \vert W\vert \leq 2d$, and $D\in \mathcal{A}$, and $D\subseteq S$, a contradiction.



   So from Lemma \ref{dom_subset}, it follows that $\gamma(G-E')>\gamma(G)$ if and only if for every $A\in \mathcal{A}$, we have $\gamma(G_A+)> \gamma(G)$. Recall that $G_A+$ is a copy of $G$ plus $\vert A\vert$ leaves. Therefore if $G$ is $H$-free and $\delta(H)\geq 2$, it follows that $G_A+$ is $H$-free. Hence we can compute $\gamma(G)$ and $\gamma(G_A+)$ in polynomial time. Since there is at most $\mathcal{O}(n^{2d})$ sets $E'$, and their corresponding collections $\mathcal{A}$ are of size at most $\mathcal{O}(n^{2d})$, it follows that we can check if there exists such a set $E'$, with $\gamma(G-E')>\gamma(G)$, in polynomial time. So $d$-\textsc{Bondage} can be solved in polynomial time for $\mathcal{C}$. \smallskip

   We prove (3). Since $k$ is fixed, we can test for every set $S\subseteq V$, where $\vert S\vert \leq k$, if $S$ is a dominating set of $G$ in polynomial time. It follows that we can compute the list of minimum dominating set of $G$ in polynomial  time. Then for any set of edges $E'\subseteq E$, where $\vert E'\vert \leq d$, we define $\mathcal{A}$ as in the proof of item (2). Recall that $\vert \mathcal{A}\vert \leq \mathcal{O}(n^{2d})$. To test if $\gamma(G-E')>\gamma(G)$, we make use of Lemma \ref{dom_subset}. Therefore we check if for every $E'\subseteq E$, such that $\vert E'\vert \leq d$, and for every $A\in \mathcal{A}$ if $\gamma(G_A+)\neq \gamma(G)$. So $d$-\textsc{Bondage} can be solved in polynomial time.
\end{proof}

To conclude this section, we give an answer to the following question raised in \cite{HuSohn}, that is, does \textsc{Bondage} belongs to $\mathsf{NP}$ ? We remark that if \textsc{Bondage} is in $\mathsf{NP}$, then there must exists a certificate, that we can test in polynomial time, with the following characteristics: given a graph $G$ and $E'\subseteq E(G)$, is $\gamma(G-E')>\gamma(G)$ ? Therefore we should be able to test if a given edge is $\gamma$-critical in $G$ in polynomial time. Yet we show that this is not possible, unless $\mathsf{P}=\mathsf{NP}$.

\begin{theorem}
   Given a graph $G$ and an edge $e\in E(G)$, there is no polynomial time algorithm deciding if $\gamma(G-e)=\gamma(G)+1$, unless $\mathsf{P}=\mathsf{NP}$.
\end{theorem}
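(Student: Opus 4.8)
The plan is to establish hardness by reducing from the problem \textsc{Core-Mis} (equivalently, the question of whether a given vertex lies in $\alpha$-core$(G)$), which Theorem~\ref{No-Poly-Core-MIS} already shows has no polynomial-time algorithm unless $\mathsf{P}=\mathsf{NP}$. The final statement is exactly the edge-analogue of that theorem, so the natural strategy is to mirror its proof structure: assume a polynomial-time tester for $\gamma$-criticality exists, and use it to solve a problem already known to be intractable. Concretely, I would try to reduce either \textsc{Dominating Set} itself or the vertex-criticality version of domination to $\gamma$-critical edge testing.

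The key steps, in order, would be as follows. First, I would look for a gadget that attaches to a target vertex $v$ and turns the question ``does every minimum dominating set of $G$ contain $v$?'' (i.e.\ is $v$ in the $\gamma$-analogue of a core) into the question ``is a particular edge $e$ $\gamma$-critical?'' A clean candidate is to attach a pendant path or leaf to $v$, so that the edge joining $v$ to its new pendant neighbor is forced to be $\gamma$-critical precisely when $v$ must be used to dominate that leaf in every $\gamma$-set. Second, I would verify that this gadget does not disturb $\gamma(G)$ in an uncontrolled way — one must compute $\gamma$ of the modified graph exactly, in terms of $\gamma(G)$, exactly as Lemma~\ref{dom_subset} does for the ``forced into the dominating set'' operation $G_A+$. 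Indeed, Lemma~\ref{dom_subset} is almost tailor-made here: it characterizes when a vertex set $A$ can be forced into a minimum dominating set via added leaves, in terms of $\gamma(G_A+)=\gamma(G)$, and the single-vertex case $A=\{v\}$ links directly to whether the pendant edge is $\gamma$-critical.

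Third, having built the equivalence between ``$e$ is $\gamma$-critical in $G'$'' and ``$v$ is forced in every $\gamma$-set of $G$,'' I would then reduce \textsc{Dominating Set} to the latter by the same iterative vertex-elimination argument used in the proof of Theorem~\ref{No-Poly-Core-MIS}: a polynomial-time criticality tester would let me decide, vertex by vertex, whether each vertex must belong to a minimum dominating set, shrinking the instance each time and ultimately computing $\gamma(G)$ in polynomial time, contradicting the $\mathsf{NP}$-hardness of \textsc{Dominating Set}.

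The main obstacle I anticipate is the $\gamma$-bookkeeping in the gadget. Unlike the independence-number case, where deleting a vertex changes $\alpha$ by at most one in a transparent way, domination numbers interact subtly with pendant attachments: adding a leaf to $v$ may or may not raise $\gamma$ depending on whether $v$ was already ``needed,'' and one must rule out the degenerate situations where the added structure creates new minimum dominating sets that avoid the forced vertex. Getting a clean, unconditional identity such as $\gamma(G')=\gamma(G)+c$ for a fixed constant $c$, together with a faithful correspondence between $\gamma$-sets of $G$ and of $G'$, is where the real care is required; Lemma~\ref{dom_subset} should carry most of this weight, so the proof may in fact be quite short once the right single-vertex specialization is invoked.
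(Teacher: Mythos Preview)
Your proposal has a real gap in the third step. The iterative vertex-elimination argument from Theorem~\ref{No-Poly-Core-MIS} relies on the fact that for independence, $\alpha(G-v)$ equals either $\alpha(G)$ or $\alpha(G)-1$, and a core test tells you which. For domination this fails: deleting a vertex can change $\gamma$ arbitrarily (the center of $K_{1,n}$ is in every $\gamma$-set, but removing it sends $\gamma$ from $1$ to $n$), and removing a non-core vertex can also lower $\gamma$ (delete a leaf of $P_4$). So ``shrinking the instance each time'' by vertex removal does not track $\gamma(G)$ the way it tracks $\alpha(G)$, and the reduction as written does not go through. There is also a smaller slip: the pendant edge $uv$ in $G_v+u$ is $\gamma$-critical precisely when \emph{some} $\gamma$-set of $G$ contains $v$ (this is exactly Lemma~\ref{dom_subset} with $A=\{v\}$), i.e.\ it detects $v\notin\gamma$-anticore, not $v\in\gamma$-core as you wrote.

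Your scheme can be salvaged---use the anticore test to greedily \emph{grow} a set $A$ with $\gamma(G_A{+})=\gamma(G)$, testing one new vertex at a time via the pendant edge, until $A$ is itself a $\gamma$-set---but the paper's proof bypasses all of this. It simply uses that deleting a single edge changes $\gamma$ by exactly $0$ or $1$, and that deleting all edges leaves $\gamma=|V|$. So one removes the edges of $G$ one at a time, queries the oracle at each step, and counts the number $p$ of positive answers; then $\gamma(G)=|V|-p$. No gadgets, no pendant attachments, no Lemma~\ref{dom_subset}: the oracle is applied directly to edges of $G$, and the unit-step monotonicity of $\gamma$ under edge deletion does all the work.
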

\begin{proof}
   We define as $\gamma$-\textsc{Edge Deletion} the problem of deciding, given a graph $G$ and an edge $e\in E(G)$, if $\gamma(G-e)=\gamma(G)+1$. We show that if $\gamma$-\textsc{Edge Deletion} can be solved in polynomial time, then we can solve \textsc{Dominating set} in polynomial time.

   Let $(G,k)$ be an instance for \textsc{Dominating Set}. We set $G'\leftarrow G$ and $p\leftarrow 0$. We describe the following procedure. We pick an edge $e\in E(G')$ and set $G'\leftarrow G'-e$. We solve $\gamma$-\textsc{Edge Deletion} with input $(G',e)$ in polynomial time. If $(G',e)$ is a \textsc{Yes-instance}, then we set $p\leftarrow p + 1$. If $V(G')$ is an independent set of $G'$, then we stop this procedure, else we continue. Note that at each step, the instance for $\gamma$-\textsc{Edge Deletion} has one less edge. Hence there is at most $\vert E(G)\vert$ steps for this procedure to end.

   At the end of the procedure, it follows that $\gamma(G')=\vert V(G)\vert$. We remark that $\gamma(G)+p=\gamma(G')$ since $p$ is incremented each time the domination number increase by one. Therefore $\gamma(G) = \vert V(G)\vert - p$, and it follows that $(G,k)$ is a \textsc{Yes-instance} of \textsc{Dominating Set} if and only if  $\vert V(G)\vert - p \leq k$. Therefore we can solve \textsc{Dominating Set} in polynomial time. Yet \textsc{Dominating Set} is known to be $\mathsf{NP}$-complete, see \cite{GareyJohnson}. This completes the proof.
\end{proof}

\begin{coro}
   \textsc{Bondage} is not in $\mathsf{NP}$, unless $\mathsf{P}=\mathsf{NP}$.
\end{coro}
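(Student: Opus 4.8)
The plan is to obtain the corollary as a direct consequence of the preceding theorem, arguing by contradiction about the shape of a hypothetical $\mathsf{NP}$ certificate, exactly in the spirit of the remark made for $\alpha$-\textsc{Core} after Theorem \ref{No-Poly-Core-MIS}. Suppose \textsc{Bondage} $\in \mathsf{NP}$. Then every \textsc{Yes}-instance $(G,d)$, that is, every $(G,d)$ with $b(G)\le d$, is witnessed by a polynomially bounded certificate that is checkable in polynomial time; the natural such witness is a set $E'\subseteq E(G)$ with $\vert E'\vert\le d$ and $\gamma(G-E')>\gamma(G)$, and verifying it amounts to confirming $\gamma(G-E')>\gamma(G)$ in polynomial time.

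First I would specialize to $d=1$. A certificate then reduces to a single edge $e$; and because deleting an edge cannot decrease the domination number while adjoining one vertex always repairs any lost domination, we have $\gamma(G)\le\gamma(G-e)\le\gamma(G)+1$. Hence confirming $\gamma(G-e)>\gamma(G)$ is exactly the same as deciding whether $\gamma(G-e)=\gamma(G)+1$, so a polynomial-time verifier for \textsc{Bondage} would in particular decide $\gamma$-\textsc{Edge Deletion} in polynomial time. By the preceding theorem no such algorithm exists unless $\mathsf{P}=\mathsf{NP}$, and the conclusion follows.

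The step requiring the most care, and the one I expect to be the main obstacle, is justifying that membership in $\mathsf{NP}$ forces the verifier to actually decide $\gamma(G-e)>\gamma(G)$, rather than certify the answer by some indirect route that sidesteps this test. This is precisely the point already addressed in the remark preceding the theorem, and it is the same subtlety handled for $\alpha$-\textsc{Core}: a polynomial-time checkable certificate for the predicate ``$b(G)\le d$'' must, on the minimal witnessing data, reduce to testing whether the removed edges raise the domination number, hence to $\gamma$-\textsc{Edge Deletion}. Granting this, the corollary is immediate from the theorem above.
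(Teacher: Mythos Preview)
Your approach mirrors the paper's exactly: the paper gives no separate proof of the corollary and relies entirely on the informal remark preceding the theorem, which you quote and expand. You are right to flag the weak step.

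The gap you identify is real, and neither the paper's remark nor your final paragraph resolves it. Membership in $\mathsf{NP}$ means only that \emph{some} polynomially bounded certificate exists with \emph{some} polynomial-time verifier; nothing forces the certificate to be an edge set $E'$, nor the verifier to decide whether $\gamma(G-E')>\gamma(G)$. Your sentence ``a polynomial-time checkable certificate \ldots\ must, on the minimal witnessing data, reduce to testing whether the removed edges raise the domination number'' is not a valid inference: the definition of $\mathsf{NP}$ does not constrain certificates to be minimal witnesses of any prescribed form, so from a hypothetical $\mathsf{NP}$ verifier for \textsc{Bondage} one cannot directly extract a polynomial-time algorithm for $\gamma$-\textsc{Edge Deletion}. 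The same objection applies verbatim to the paper's remark (and to the earlier analogous remark about $\alpha$-\textsc{Core}). A rigorous proof that \textsc{Bondage} $\notin\mathsf{NP}$ unless $\mathsf{P}=\mathsf{NP}$ would require a different argument, for instance establishing $\mathsf{coNP}$-hardness of the problem.
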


\section{Conclusion and open problems}

We proved the $\mathsf{NP}$-hardness of $1$-\textsc{Bondage} for the following classes of graphs:
\begin{itemize}
   \item planar cubic graphs;
   \item planar claw-free graphs with maximum degree $3$;
   \item planar bipartite graphs with degrees $2$ or $3$ and girth at least $k$, for any fixed $k\geq 3$.
\end{itemize}

It is natural to ask for the complexity of the \textsc{Bondage} for bipartite cubic graphs of girth at least $k$, for any fixed $k\geq 3$. Yet from our construction, it seems not possible to remove vertices of degree two without introducing small cycles. Hence the study of $(C_4,...,C_k)$-free graphs would be of interest. Complexity results for $H$-free graphs, for fixed $H$, would also be of important values. Another important study concerns the complexity of the $d$-\textsc{Bondage} for fixed $d\geq 2$. To conclude, we want to raise the following problem: \\

\textbf{Problem}: Characterize the planar graphs of girth $8$ with bondage number $3$.

\begin{ack}
   The author express its gratitude to Fran\c cois Delbot, Christophe Picouleau and St\'ephane Rovedakis for carefully reading the manuscript, and for providing helpful comments.
\end{ack}

\end{document}